\tikzset{elegant/.style={smooth,thick,samples=50,cyan}}
\newtheorem{theorem}{Theorem}[section]
\newtheorem{prop}{Proposition}[section]
\newtheorem{coro}{Corollary}[section]
\newtheorem{remark}{Remark}[section]
\newcommand{\ml}{\mathcal}
\newcommand{\mb}{\mathbb}
\DeclareMathOperator{\intt}{int}
\DeclareMathOperator{\extt}{ext}
\DeclareMathOperator{\bdd}{bdd}
\title{Optimal large-time estimates and singular limits for thermoelastic plate equations with the Fourier law}
\author[1]{Wenhui Chen\thanks{Wenhui Chen (wenhui.chen.math@gmail.com)}}
\affil[1]{School of Mathematics and Information Science, Guangzhou University, 510006 Guangzhou, China}
\author[2]{Ryo Ikehata\thanks{Ryo Ikehata (ikehatar@hiroshima-u.ac.jp)}}
\affil[2]{Department of Mathematics, Division of Educational Sciences, Graduate School of Humanities and Social Sciences, Hiroshima University, 739-8524 Higashi-Hiroshima, Japan}
\date{}
\begin{document}

\maketitle
\begin{abstract}
	\medskip
In this paper, we study asymptotic behaviors for classical thermoelastic plate equations with the Fourier law of heat conduction in the whole space $\mathbb{R}^n$, where we introduce a reduction methodology basing on third-order (in time) differential equations and refined Fourier analysis.  We derive optimal growth estimates when $n\leqslant 3$, bounded estimates when $n=4$, and decay estimates when $n\geqslant 5$ for the vertical displacement in the $L^2$ norm. Particularly, the new critical dimension $n=4$ for distinguishing the decisive role between the plate model and the Fourier law of heat conduction is discovered.  Moreover, concerning the small thermal parameter in the temperature equation, we study the singular limit problem. We not only show global (in time) convergence of the vertical displacements between thermoelastic plates and  structurally damped plates, but also rigorously demonstrate a new second-order profile of the solution. Our methodology can settle several closely related problems in thermoelasticity.\\
	
	\noindent\textbf{Keywords:} thermoelastic plate equations, Cauchy problem, optimal estimates, asymptotic profiles, singular limits.\\
	
	\noindent\textbf{AMS Classification (2020)} 35G40, 35B40, 35Q79 
\end{abstract}
\fontsize{12}{15}
\selectfont
\section{Introduction}
It is widely known that thin plates theory arises in numerous applications of engineering, for example, raft foundation, road pavement, airport runway, etc. In order to describe thin plate motions under different practical circumstances, there are various mathematical models in the plate field, including Mindlin-Timoshenko models, von K\'arm\'an models, thermoelastic plate equations and viscoelastic plates. In the paper, we will study some asymptotic behaviors (in terms of large-time or small thermal parameter) for classical thermoelastic plate equations in the whole space $\mb{R}^n$. Before introducing our aims, we will sketch out some historical background of thermoelastic plates.
\subsection{Background of thermoelastic plate equations}\label{Sub-Sec-Background}
Let us consider a homogeneous, elastic and thermally isotropic plate subjecting to a temperature distribution. Associating with the second law of thermodynamics for irreversible process (relates the entropy to the elastic strains), the well-known thermoelastic plate equations with the Fourier law of heat conduction, namely,
\begin{align}\label{Eq-G-Linear-TEP}
	\begin{cases}
		u_{tt}+\Delta^2 u+\Delta\theta=0,\\
		\epsilon\theta_t-\Delta \theta-\Delta u_t=0,
	\end{cases}
\end{align}
has been established, where the scalar unknowns $u=u(t,x)$ and $\theta=\theta(t,x)$ denote, respectively, the vertical displacement and  the temperature (relative to some reference temperature). The constant $\epsilon>0$, which is the so-called \emph{thermal parameter}, consists of some parameters from the specific heat of the body, thermal conductivity as well as the mass density per unit volume. Note that we omitted other physical constants for the simplicity. Concerning heuristic derivations of the model \eqref{Eq-G-Linear-TEP}, we refer interested readers to \cite{Lagnese-Lions=1988,Lagnese=1989}.

In recent years, thermoelastic plate equations \eqref{Eq-G-Linear-TEP} in bounded or unbounded domains (even the whole space $\mb{R}^n$) have caught a lot of attentions (see \cite{Kim=1992,Shibata=1994,Munoz-Rivera-Racke=1995,Liu-Renardy=1995,Liu-Zheng=1997,Liu-Liu=1997,Lasiecka-Triggiani=1998,Liu-Yong=1998,Lebeau-Zuazua=1998,Perla-Zuazua=2003,Denk-Racke=2006,Naito-Shibata=2009,Said-Houari=2013,Lasiecka-Wilke=2013,Racke-Ueda=2016,Racke-Ueda=2017,Denk-Shibata=2017,Denk-Shibata=2019} and references therein). We stress that topics of thermoelastic plates are energetic not only in the communities of PDEs but also of controllability and dynamical systems perspectives. According to the theme of this work, we will just briefly introduce the progressive progress in the corresponding Cauchy problem to \eqref{Eq-G-Linear-TEP} with $\epsilon=1$, namely,
\begin{align}\label{Eq-C-Linear-TEP}
	\begin{cases}
		u_{tt}+\Delta^2 u+\Delta\theta=0,&x\in\mb{R}^n,\ t>0,\\
		\theta_t-\Delta \theta-\Delta u_t=0,&x\in\mb{R}^n,\ t>0,\\
		u(0,x)=u_0(x),\ u_t(0,x)=u_1(x),\ \theta(0,x)=\theta_0(x),&x\in\mb{R}^n.
	\end{cases}
\end{align}
By constructing a natural ansatz (motivated by its energy terms) of the coupled system \eqref{Eq-C-Linear-TEP} as follows:
\begin{align}\label{old-unknown}
U^{\mathrm{N}}(t,x):=\big(u_t(t,x),\Delta u(t,x),\theta(t,x)\big),	
\end{align}
 the author of \cite{Said-Houari=2013} obtained some decay estimates of $U^{\mathrm{N}}(t,\cdot)$ in the $L^{2}$ or $L^{\infty}$ norms with weighted $L^1$ datum, whose proof is based on energy method as well as Fourier splitting techniques. Later, associated with energy estimates in the Fourier space, the authors of \cite{Racke-Ueda=2016} derived sharp decay estimates for $U^{\mathrm{N}}(t,\cdot)$ in the $\dot{H}^k$ norm carrying $k\geqslant 0$, where initial datum are assumed in energy spaces with additionally $L^1$ regularity. In order to verify the optimality of the next decay estimate (it also was shown in \cite[Theorem 3.6]{Munoz-Rivera-Racke=1995} and \cite[Theorem 2.1 with $\gamma=0$]{Said-Houari=2013}):
 \begin{align*}
 	\|U^{\mathrm{N}}(t,\cdot)\|_{\dot{H}^k}\lesssim (1+t)^{-\frac{n}{4}-\frac{k}{2}}\|U^{\mathrm{N}}(0,\cdot)\|_{ L^1}+\mathrm{e}^{-ct}\|U^{\mathrm{N}}(0,\cdot)\|_{\dot{H}^k},
 \end{align*}
the characteristic roots of \eqref{Eq-C-Linear-TEP} in the Fourier space have been calculated explicitly. Additionally, from their works, there exists a unique solution to \eqref{Eq-C-Linear-TEP} satisfying
\begin{align*}
u\in\ml{C}([0,\infty),\dot{H}^2),\ \ u_t\in\ml{C}([0,\infty),L^2),\ \ \theta\in\ml{C}([0,\infty),L^2),
\end{align*}
for $u_0\in \dot{H}^2$ and $u_1,\theta_0\in L^2$ originated from the vector unknown \eqref{old-unknown}.  Subsequently, a quasilinear Cauchy problem was investigated by \cite{Racke-Ueda=2017}.  Under this situation, a natural question emerges as follows:
\begin{center}
\emph{How does the Fourier law of heat conduction influence on the plate model for large-time?}
\end{center}
From the viewpoint of the unknown \eqref{old-unknown}, a series of works \cite{Munoz-Rivera-Racke=1995,Said-Houari=2013,Racke-Ueda=2016,Racke-Ueda=2017} claimed that the Fourier law of heat conduction plays a decisive role. Nevertheless, if one considers a corresponding nonlinear problem of \eqref{Eq-C-Linear-TEP} with the nonlinearity $\ml{N}[u]$ of the solution itself $u=u(t,x)$, an important step is to understand the above-mentioned question with respect to $u$ rather than $U^{\mathrm{N}}$. Concerning large-time behaviors of the solution itself, a complete classification is still unknown, but we will state the new classification in the $L^2$ framework by the critical dimension $n=4$ in this paper. In particular, the large-time behaviors of $u(t,\cdot)$ to the coupled system \eqref{Eq-C-Linear-TEP} are fully determined by the plate model in physical dimensions $n=1,2,3$ as a new phenomenon.

To the best of authors' knowledge, asymptotic behaviors (e.g. estimates in the $L^2$ norm and asymptotic profiles) of the vertical displacement $u(t,\cdot)$, so far did not be investigated deeply. Indeed, these studies are not generalizations of the previous works \cite{Munoz-Rivera-Racke=1995,Said-Houari=2013,Racke-Ueda=2016} due to some hidden influences of oscillations from the plate model . Let us explain this challenge briefly by an example.
\begin{remark}
 If one uses the sharp quantitative estimate in \cite[Equation (2.3)]{Racke-Ueda=2016} that is
\begin{align*}
	|\xi|^2|\hat{u}(t,\xi)|\leqslant\frac{13}{3}\mathrm{e}^{-\frac{|\xi|^2t}{52}}|\widehat{U}^{\mathrm{N}}(0,\xi)|,
\end{align*}
by simply dividing $|\xi|^2$ in the previous line, the next unbounded estimates appear:
\begin{align*}
	\|u(t,\cdot)\|_{L^2}&\lesssim\left\||\xi|^{-2}\mathrm{e}^{-c|\xi|^2t}\right\|_{L^2(|\xi|\leqslant 1)}\|\widehat{U}^{\mathrm{N}}(0,\xi)\|_{L^{\infty}(|\xi|\leqslant 1)}+\mathrm{e}^{-ct}\|\widehat{U}^{\mathrm{N}}(0,\xi)\|_{\dot{H}^{-2}(|\xi|\geqslant 1)}\\
	&\lesssim\left(\int_0^1r^{n-5}\mathrm{e}^{-2cr^2t}\mathrm{d}r\right)^{1/2}\|U^{\mathrm{N}}(0,\cdot)\|_{L^1}+\mathrm{e}^{-ct}\|U^{\mathrm{N}}(0,\cdot)\|_{L^2}
\end{align*}
due to some singularities of $r^{n-5}$ for $n=1,\dots,4$ as $r\downarrow0$.
\end{remark}
\noindent Furthermore, another difficulty is the rigorous justification of optimal estimates in the sense of same upper and lower bound estimates for large-time. Especially, because some oscillating effects in thermoelastic plate equations \eqref{Eq-C-Linear-TEP}, one may expect growth estimates rather than decay estimates for lower dimensions as $t\gg1$.

Finally, we underline that the difficulties mentioned in preceding part of the paragraph not only occur in thermoelastic plates, but also appear in other coupled systems, e.g. thermoelasticity (with the Cattaneo law, the Pipkin-Gurtin law, or the Jeffreys type equation), porous elasticity, Timoshenko beams and Bresse beams because applications of energy methods and energy-type ansatz. Over the last couple of decades, another effective approach to treat linear coupled systems, which is the so-called \emph{diagonalization procedure}, had been  deeply developed (see, for example, \cite{Reissig-Wang=2005,Yang-Wang=2006,Jachmann-Reissig=2009,Liu-Reissig=2014,Reissig=2016,Liu-Shi=2022}). Some asymptotic representations of a micro-energy can be formulated by diagonalization procedure, but sharp asymptotic behaviors of the solution itself are still open to the best of authors' knowledge. To overcome these difficulties,  with a deep root from the monograph \cite{Jiang-Racke=2000}, we will propose a \emph{reduction methodology}, that is a reduction to higher-order (in time) PDEs associated with refined time-frequency analysis, to deal with thermoelastic plate equations \eqref{Eq-C-Linear-TEP}. This approach can recover some oscillations to compensate strong singularities in lower dimensions (see Remark \ref{Rem-3.3}) and enlighten the construction of asymptotic profiles. More detailed explanation will be addressed afterwards. We believe our methodology can be widely applied in other hyperbolic-parabolic coupled systems and hyperbolic-hyperbolic coupled systems, e.g. thermoelastic system of type II, of type III, or with second sound, dissipative Timoshenko system, Bresse systems.

\subsection{Main purposes of this paper}
In the present manuscript, we consider the following Cauchy problem for classical thermoelastic plate equations with the Fourier law of heat conduction:
\begin{align}\label{Eq-Linear-TEP}
	\begin{cases}
		u_{tt}+\Delta^2 u+\Delta\theta=0,&x\in\mb{R}^n,\ t>0,\\
		\epsilon\theta_t-\Delta \theta-\Delta u_t=0,&x\in\mb{R}^n,\ t>0,\\
		u(0,x)=u_0(x),\ u_t(0,x)=u_1(x),\ \theta(0,x)=\theta_0(x),&x\in\mb{R}^n,
	\end{cases}
\end{align}
with a positive constant $\epsilon$ as we introduced in Subsection \ref{Sub-Sec-Background}. Our first purpose  is to study large-time asymptotic behaviors of solutions to \eqref{Eq-Linear-TEP} carrying $\epsilon=1$ without loss of generality. In Section \ref{Sec-Large-time}, by applying the reduction procedure for the coupled system, the core consideration immediately becomes the third-order (in time) evolution equation \eqref{Eq-third-order}. Then, with the aid of refined Fourier analysis, we derive optimal estimates of solutions in the $L^2$ norm. Particularly among them, the next optimal estimates hold for $t\gg1$:
\begin{align*}
\|u(t,\cdot)\|_{L^2}\simeq
	t^{1-\frac{n}{4}}\ \ \mbox{for any}\ \ n\geqslant 1,
\end{align*}
under $|P_{u_1}|\neq 0$ and some regular hypotheses for initial datum.  It implies infinite time $L^2$-blowup of the vertical displacement when $n=1,2,3$ with optimal polynomial growth rates (the decisive part is the plate model); neither growth nor decay estimates of the vertical displacement when $n=4$ (exquisite interplay between plates and the Fourier law); optimal decay estimates with the factors from plates and the Fourier law when $n\geqslant 5$.  One may see Remark \ref{Rem-new} for the  in-depth explanation.  As a by-product, we determine asymptotic profiles of solutions by diffusive-plates and the heat model. 

 For another, let us formally take into account the limit case with the vanishing thermal parameter $\epsilon=0$ in thermoelastic plate equations \eqref{Eq-Linear-TEP}, which turns out to be structurally damped plate equation (see \cite{Pham-Kainane-Reissig=2015,Dabbicco-Ebert=2017,Dao-Reissig=2019} and references therein) whose initial datum are inherited from \eqref{Eq-Linear-TEP} as follows:  
\begin{align}\label{Eq-u,theta,0}
	\begin{cases}
		u_{tt}^0+\Delta^2 u^0-\Delta u_t^0=0,&x\in\mb{R}^n,\ t>0,\\
		u^0(0,x)=u_0(x),\ u_t^0(0,x)=u_1(x),&x\in\mb{R}^n.
	\end{cases}
\end{align}
Formally, \eqref{Eq-u,theta,0} seems to be the limit model for the coupled system \eqref{Eq-Linear-TEP} with  $\epsilon=0$ whereas the last data $\theta(0,x)=\theta_0(x)$ will lose. For this reason, our second purpose is to investigate the small-parameter asymptotic behaviors of solutions to \eqref{Eq-Linear-TEP}. In Section \ref{Sec-Singular-limit}, we rigorously justify singular limits (limiting procedures) from thermoelastic plate equations \eqref{Eq-Linear-TEP} to structurally damped plate equation \eqref{Eq-u,theta,0} as the thermal parameter tends to zero. For instance, when $n\geqslant 3$, it holds 
\begin{center}
 $u\to u^0$ in $L^{\infty}([0,\infty),L^2(\mb{R}^n))$ as $\epsilon\downarrow 0$ with the convergence rate $\epsilon$.
\end{center}
 Furthermore, by applying multi-scale analysis and energy methods in the Fourier space, we derive a formal expansion of solutions with respect to $\epsilon$, and rigorously justify the correction for the second-order asymptotic expansion. For example,  when $n\geqslant 3$, it holds  
 \begin{center}
 	$u\to u^0+\epsilon u^{I,1}$ in $L^{\infty}([0,\infty),L^2(\mb{R}^n))$ as $\epsilon\downarrow 0$  with the  convergence rate $\epsilon^2$,
 \end{center}
 where $u^{I,1}=u^{I,1}(t,x)$ is the solution to inhomogeneous structural damped plates with the source term $\Delta u_t^0-\Delta^2u^0$ (see precisely in \eqref{uI1} later).

This paper is organized as follows: Section \ref{Sec-Main} will show main results of this work, whose proofs will be given in Section \ref{Sec-Large-time} and Section \ref{Sec-Singular-limit}, respectively. Eventually, some concluding remarks in Section \ref{Sec-Concluding} about some applications of our methodology complete the manuscript.

\subsection{Notations}\label{Sub-Notation}
Let us introduce some notations which will be used all over this work. We define the following zones of the Fourier space:
\begin{align*}
	\ml{Z}_{\intt}(\varepsilon_0):=\{|\xi|\leqslant\varepsilon_0\ll1\},\ \ 
	\ml{Z}_{\bdd}(\varepsilon_0,N_0):=\{\varepsilon_0\leqslant |\xi|\leqslant N_0\},\ \ 
	\ml{Z}_{\extt}(N_0):=\{|\xi|\geqslant N_0\gg1\},
\end{align*}
in which, the cut-off functions $\chi_{\intt}(\xi),\chi_{\bdd}(\xi),\chi_{\extt}(\xi)\in \mathcal{C}^{\infty}$ having their supports in the corresponding zones $\ml{Z}_{\intt}(\varepsilon_0)$, $\ml{Z}_{\bdd}(\varepsilon_0/2,2N_0)$ and $\ml{Z}_{\extt}(N_0)$, respectively, such that
\begin{align*}
	\chi_{\bdd}(\xi)=1-\chi_{\intt}(\xi)-\chi_{\extt}(\xi)\ \ \mbox{for all}\ \ \xi \in \mb{R}^n.
\end{align*}
The symbols of pseudo-differential operators $|D|^s$ and $\langle D\rangle^s $ are denoted by $|\xi|^s$ and $\langle \xi\rangle^s$ with $s\in\mb{R}$, respectively, where $\langle \xi\rangle:=\sqrt{1+|\xi|^2}$ is  the Japanese bracket.

The symbol $f\lesssim g$ means: there exists a positive constant $C$ fulfilling $f\leqslant Cg$, which may be varied in different lines, analogously, for $f\gtrsim g$. Additionally, the relation $f\simeq g$ holds if and only if $f\lesssim g$ and $f\gtrsim g$ are retained meanwhile.  

To end the introduction, let us recall the weighted $L^1$ space
\begin{align*}
	L^{1,1}:=\left\{f\in L^1 \ \big|\ \|f\|_{L^{1,1}}:=\int_{\mb{R}^n}(1+|x|)|f(x)|\mathrm{d}x<\infty \right\}
\end{align*}
so that $\|f\|_{L^1}\leqslant\|f\|_{L^{1,1}}$ notably. The mean of a summable function $f$ is denoted by $P_f:=\int_{\mb{R}^n}f(x)\mathrm{d}x$. The Sobolev spaces of negative order are defined by (see, for example, \cite{Runst-Si-book})
\begin{align*}
	\dot{H}^{-s}:=\left\{f\in\ml{Z}' \ \big|\ \|f\|_{\dot{H}^{-s}}:=\|(-\Delta)^{-\frac{s}{2}}f\|_{L^2}<\infty \right\},
\end{align*}
where $\ml{Z}'$ is the factor space $\ml{S}'\backslash \ml{P}$ with $\ml{P}$ denoting the space of all polynomials. We mark the usual $L^2$ space with the localization $\chi_{\intt}(D)$ or $\chi_{\intt}(\xi)$ by $L^2_{\chi}$.

\section{Main results}\label{Sec-Main}
\subsection{Asymptotic behaviors of solutions for large-time}
To express the roots of the characteristic equation (see the statements after \eqref{CC-01}) for the Cauchy problem \eqref{Eq-Linear-TEP}, let us introduce two positive parameters
\begin{align}\label{alpha+-}
	\alpha_{\pm}:=\sqrt[3]{\frac{1}{2}\left(3\sqrt{69}+11\right)}\pm\sqrt[3]{\frac{1}{2}\left(3\sqrt{69}-11\right)}.
\end{align}
We firstly formulate optimal estimates of the solution $(u,\theta)$ to thermoelastic plate equations.
\begin{theorem}\label{Thm-Linear} 
	Let us consider the Cauchy problem \eqref{Eq-Linear-TEP} with $\epsilon=1$ and initial datum $u_0\in H^2\cap L^1$, $u_1,\theta_0\in L^2\cap L^{1}$ for any $n\geqslant 1$. Then, the solution $(u,\theta)$ satisfies the following estimates for $t\gg1$:
	\begin{align*}
		\|u(t,\cdot)\|_{L^2}&\lesssim t^{-\frac{n}{4}}\|u_0\|_{L^2\cap L^1}+t^{1-\frac{n}{4}}\|(u_1,\theta_0)\|_{(L^2\cap L^1)^2},\\
		\|\theta(t,\cdot)\|_{L^2}&\lesssim t^{-1-\frac{n}{4}}\|u_0\|_{\dot{H}^2\cap L^1}+t^{-\frac{n}{4}}\|(u_1,\theta_0)\|_{(L^2\cap L^1)^2}.
	\end{align*}
Assuming  $u_1,\theta_0\in L^{1,1}$ additionally, then the solution $(u,\theta)$ satisfies the following optimal estimates for $t\gg1$:
\begin{align*}
		t^{1-\frac{n}{4}}|P_{u_1}|&\lesssim \|u(t,\cdot)\|_{L^2}\lesssim t^{1-\frac{n}{4}}\|(u_0,u_1,\theta_0)\|_{(L^2\cap L^1)\times(L^2\cap L^{1})^2} ,\\
	 t^{-\frac{n}{4}}|P_{\theta_0}|&\lesssim \|\theta(t,\cdot)\|_{L^2}\lesssim t^{-\frac{n}{4}}\|(u_0,u_1,\theta_0)\|_{(\dot{H}^2\cap L^1)\times (L^2\cap L^{1})^2}.
\end{align*}
\end{theorem}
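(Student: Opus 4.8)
The plan is to reduce the coupled system \eqref{Eq-Linear-TEP} with $\epsilon=1$ to a single scalar third-order (in time) equation. Differentiating the first equation in $t$ and eliminating $\Delta\theta,\Delta^2\theta$ through the two equations gives \eqref{Eq-third-order}, i.e. $u_{ttt}-\Delta u_{tt}+2\Delta^2u_t-\Delta^3u=0$; after Fourier transform, $\widehat u_{ttt}+|\xi|^2\widehat u_{tt}+2|\xi|^4\widehat u_t+|\xi|^6\widehat u=0$, with $\widehat u(0,\xi)=\widehat u_0$, $\widehat u_t(0,\xi)=\widehat u_1$ and $\widehat u_{tt}(0,\xi)=-|\xi|^4\widehat u_0+|\xi|^2\widehat\theta_0$ (the last read off from the first equation at $t=0$). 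Since the system has one characteristic polynomial, $\widehat\theta$ solves the same equation, with data $\widehat\theta_0$, $-|\xi|^2(\widehat\theta_0+\widehat u_1)$, $|\xi|^4\widehat u_1+|\xi|^6\widehat u_0$. The polynomial $\lambda^3+|\xi|^2\lambda^2+2|\xi|^4\lambda+|\xi|^6$ is homogeneous under $\lambda=|\xi|^2\mu$, so its roots are exactly $\lambda_j(\xi)=|\xi|^2\mu_j$ with $\mu^3+\mu^2+2\mu+1=0$; Cardano's formula gives a real root $\mu_0=-\tfrac13(1+\alpha_-)$ and a conjugate pair $\mu_\pm=\tfrac16(\alpha_--2)\pm\tfrac{\sqrt3}{6}\alpha_+\,\mathrm{i}$ with $\alpha_\pm$ as in \eqref{alpha+-}, all with strictly negative real part. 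Hence $\mathrm{Re}\,\lambda_j(\xi)\leqslant-c|\xi|^2$ for all $\xi$, and $\leqslant-c<0$ once $|\xi|$ is bounded away from $0$.

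Writing $\widehat u(t,\xi)=K_0\widehat u_0+K_1\widehat u_1+K_2\widehat\theta_0$ with $K_\ell=\sum_j\gamma_{j\ell}(\xi)e^{\lambda_j(\xi)t}$ the three fundamental solutions, the substitution $\lambda_j=|\xi|^2\mu_j$ makes $K_1$ carry a factor $|\xi|^{-2}$ and $K_2$ a factor $|\xi|^{-4}$ — the spurious singularities flagged in the introductory remark. The crux of the refined analysis is that the $\mu$-only coefficient sums satisfy the divided-difference identities $\sum_jg_j=\sum_jg_j\mu_j=0$, $\sum_jg_j\mu_j^2=1$ (with $g_j=\prod_{i\neq j}(\mu_j-\mu_i)^{-1}$, and similar relations for $\mu_k\mu_lg_j$ and $(\mu_k+\mu_l)g_j$), so that, setting $s:=|\xi|^2t$, one obtains the \emph{exact} profile identities $K_0=\Psi_0(s)$, $K_1=t\,\Psi_1(s)$, $K_2=\tfrac12|\xi|^2t^2\,\Psi_2(s)$ with entire $\Psi_\ell$, $\Psi_\ell(0)=1$ and $|\Psi_\ell(s)|\lesssim e^{-cs}$: the vanishing of the leading Taylor coefficients cancels the $|\xi|^{-2},|\xi|^{-4}$ — this is how the oscillations compensate the singularity. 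Treating $\theta$ likewise, its kernels are $\lesssim e^{-c|\xi|^2t}$, $e^{-c|\xi|^2t}$, $|\xi|^2e^{-c|\xi|^2t}$. Consequently on $\ml{Z}_{\intt}(\varepsilon_0)$ one has $|\widehat u(t,\xi)|\lesssim e^{-c|\xi|^2t}(|\widehat u_0|+t|\widehat u_1|+t|\widehat\theta_0|)$ and $|\widehat\theta(t,\xi)|\lesssim e^{-c|\xi|^2t}(|\widehat\theta_0|+|\widehat u_1|+|\xi|^2|\widehat u_0|)$, while on $\ml{Z}_{\bdd}(\varepsilon_0/2,2N_0)\cup\ml{Z}_{\extt}(N_0)$ every kernel decays like $e^{-ct}$ (with at most polynomial $|\xi|$-factors in $\ml{Z}_{\extt}$, which is what forces the Sobolev requirements on $u_0$). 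The upper estimates then follow by splitting $\|\cdot\|_{L^2}^2$ over the three zones, using $\|\widehat f\|_{L^\infty}\leqslant\|f\|_{L^1}$, $\|e^{-c|\xi|^2t}\|_{L^2(\mb{R}^n)}\simeq t^{-n/4}$ and $\||\xi|^2e^{-c|\xi|^2t}\|_{L^2}\simeq t^{-1-n/4}$, the outer-zone $e^{-ct}$ being absorbed into the polynomial rates for $t\gg1$.

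For the optimal \emph{lower} bounds I would extract the leading profile. On $\ml{Z}_{\intt}$, write
\begin{align*}
\widehat u(t,\xi)=\chi_{\intt}(\xi)\Big(t\,\Psi_1(s)\,P_{u_1}+\tfrac12|\xi|^2t^2\,\Psi_2(s)\,P_{\theta_0}\Big)+\ml{E}(t,\xi),
\end{align*}
where $\ml{E}$ collects the $u_0$-channel $K_0\widehat u_0=O_{L^2}(t^{-n/4})$, the mean-corrections $K_1(\widehat u_1-P_{u_1})$ and $K_2(\widehat\theta_0-P_{\theta_0})$ — bounded through $|\widehat f(\xi)-P_f|\leqslant|\xi|\,\|f\|_{L^{1,1}}$, which is exactly where the extra $L^{1,1}$ hypothesis is used — of size $O_{L^2}(t^{1/2-n/4})$, and the $\ml{Z}_{\bdd}\cup\ml{Z}_{\extt}$ part of size $O(e^{-ct})$, so $\|\ml{E}(t,\cdot)\|_{L^2}=o(t^{1-n/4})$. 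Rescaling $\xi=\eta/\sqrt t$ turns the $L^2$ norm of the main term into $t^{1-n/4}\sqrt{\ml{Q}(P_{u_1},P_{\theta_0})}$, where
\begin{align*}
\ml{Q}(a,b):=\int_{\mb{R}^n}\Big|a\,\Psi_1(|\eta|^2)+\tfrac b2|\eta|^2\Psi_2(|\eta|^2)\Big|^2\mathrm{d}\eta
\end{align*}
is finite (by $|\Psi_\ell(s)|\lesssim e^{-cs}$) and \emph{positive definite}, because $\Psi_1(|\eta|^2)$ and $|\eta|^2\Psi_2(|\eta|^2)$ are linearly independent (the first equals $1$ at $\eta=0$, the second vanishes there); hence $\ml{Q}(a,b)\simeq a^2+b^2$ and $\sqrt{\ml{Q}(P_{u_1},P_{\theta_0})}\gtrsim|P_{u_1}|$, giving $\|u(t,\cdot)\|_{L^2}\gtrsim t^{1-n/4}|P_{u_1}|$ for $t\gg1$ (and, reading $\ml{Q}(a,b)\lesssim a^2+b^2$, also the matching $\lesssim$-side of the optimal estimate). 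The analogue for $\theta$ is the same computation: its $u_0$-channel is $O_{L^2}(t^{-1-n/4})$ and its leading term is $\chi_{\intt}(\xi)\big(\Psi^\theta_0(s)P_{\theta_0}+\Psi^\theta_1(s)P_{u_1}\big)$ with $\Psi^\theta_0(0)=1$, $\Psi^\theta_1(0)=0$ — again independent profiles — whence $\|\theta(t,\cdot)\|_{L^2}\gtrsim t^{-n/4}|P_{\theta_0}|$.

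The two steps I expect to be delicate are: first, the bookkeeping of the Vandermonde kernels $K_\ell$ so that the $|\xi|^{-2},|\xi|^{-4}$ singularities manifestly cancel and the exact profiles $\Psi_\ell$ emerge with $\Psi_\ell(0)=1$ and uniform exponential decay (this is the precise sense in which the reduction to \eqref{Eq-third-order} "recovers oscillations to compensate singularities"); and second, in the lower bounds, the positive-definiteness/linear-independence argument, which is what prevents destructive interference in $L^2$ between the $u_1$- and $\theta_0$-channels — both of which genuinely contribute at the single rate $t^{1-n/4}$ — and therefore lets $|P_{u_1}|$ (respectively $|P_{\theta_0}|$) appear by itself on the left-hand side.
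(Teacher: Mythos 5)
Your proposal is correct and follows the same reduction methodology as the paper: both pass from the coupled system to the scalar third-order equation \eqref{Eq-third-order}, compute the characteristic roots $\lambda_j=|\xi|^2\mu_j$, exploit the homogeneity to cancel the apparent $|\xi|^{-2},|\xi|^{-4}$ singularities, and extract the $t^{1-n/4}$ (resp.\ $t^{-n/4}$) profile from the low-frequency zone. What you do differently, and arguably more cleanly, is in the two technical steps. For the singularity cancellation, the paper works with the closed-form kernels $\widehat K_\ell$ written out in terms of $\mathrm{e}^{-a_0 s},\cos(a_2 s)\mathrm{e}^{-a_1 s},\sin(a_2 s)\mathrm{e}^{-a_1 s}$ and then cancels the $|\xi|^{-2}$ by the elementary identities $1-\cos y=2\sin^2(y/2)$ and $1-\mathrm{e}^z=-z\int_0^1\mathrm{e}^{z\tau}\mathrm{d}\tau$; you instead invoke the divided-difference identities $\sum_j g_j=\sum_j g_j\mu_j=0$, $\sum_j g_j\mu_j^2=1$ to show that the Taylor coefficients of the kernel in $s$ vanish to the right order, which is the same cancellation expressed in a basis-free way and would apply to higher-order reductions without recomputing trigonometric expansions. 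For the lower bound, the paper also rescales $w=r\sqrt{t}$ but then restricts to a short interval $(0,\delta_0)$ and uses continuity of the rescaled integrand $\bar I(w)$ at $w=0$ (whose value picks out exactly $(a_0-a_1)^2+a_2^2$ times $|P_{u_1}|$); you run the rescaling to its natural limit, obtaining the quadratic form $\ml{Q}(P_{u_1},P_{\theta_0})$, and invoke positive-definiteness of $\ml{Q}$ via linear independence of the profiles. Your route yields the slightly stronger lower bound $t^{1-n/4}\sqrt{|P_{u_1}|^2+|P_{\theta_0}|^2}$ rather than just $t^{1-n/4}|P_{u_1}|$, at the cost of a compactness argument; the paper's version is more explicit but only captures the $P_{u_1}$ contribution.

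Two small imprecisions worth flagging. First, with $K_2$ defined as the coefficient of $\widehat\theta_0$ (i.e. $|\xi|^2 V_2$ with $V_2$ the pure Vandermonde kernel), its naive singularity is $|\xi|^{-2}$, not $|\xi|^{-4}$; the $|\xi|^{-4}$ belongs to $V_2$ alone, so the statement ``$K_2$ carries a factor $|\xi|^{-4}$'' is a slip (harmless, since your formula $K_2=\tfrac12|\xi|^2t^2\Psi_2(s)$ is the correct one). Second, in the $\theta$-case the argument ``$\Psi^\theta_0(0)=1,\ \Psi^\theta_1(0)=0$ implies independent profiles'' needs, in addition, the observation $\Psi^\theta_1\not\equiv 0$; otherwise $\ml{Q}^\theta$ could be degenerate in the $P_{u_1}$ direction. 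That is easy to check (one has $\Psi^\theta_1(s)=\sum_j g_j(1+\mu_j)(1+\mu_j^2)\mathrm{e}^{s\mu_j}$, and the coefficients cannot all vanish since $\mu=-1$ is not a root of $\mu^3+\mu^2+2\mu+1$), and in any case the desired bound $\sqrt{\ml{Q}^\theta}\gtrsim|P_{\theta_0}|$ follows already from $\Psi^\theta_0\not\equiv 0$ together with the value $\Psi^\theta_0(0)=1$; so the conclusion stands, but state the missing piece.
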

\begin{remark}\label{Rem-new}
Our first innovation is the new classification with respect to the dimension $n$ of interplay between the plate model and the Fourier law of heat conduction. Recalling the large-time behavior of the pure plate model
\begin{align}\label{Eq-Plates}
\begin{cases}
v_{tt}+\Delta^2v=0,&x\in\mb{R}^n,\ t>0,\\
v(0,x)=v_0(x),\ v_t(0,x)=v_1(x),&x\in\mb{R}^n,
\end{cases}
\end{align}
under some $L^2$ assumptions with additionally  $L^1$ regularity (even $L^{1,1}$ regularity) on initial datum and $|P_{u_1}|\neq0$, the recent work \cite{Ikehata=2021-Dec} got the optimal growth estimates
\begin{align*}
\|v(t,\cdot)\|_{L^2}^2\simeq\begin{cases}
t^{2-\frac{n}{2}}&\mbox{if}\ \ n\leqslant 3,\\
\log t&\mbox{if}\ \  n=4,
\end{cases}
\end{align*}
for $t\gg1$. Reviewing Theorem \ref{Thm-Linear}, we actually obtained the optimal estimates of thermoelastic plate equations \eqref{Eq-Linear-TEP} with $\epsilon=1$ by the following one:
\begin{align*}
\|u(t,\cdot)\|_{L^2}^2\simeq t^{2-\frac{n}{2}}=\begin{cases}
t^{2-\frac{n}{2}}&\mbox{if}\ \ n\leqslant 3,\\
1&\mbox{if}\ \ n=4,\\
t^{-\frac{n-4}{2}}&\mbox{if}\ \ n\geqslant 5,
\end{cases}
\end{align*}
as $t\gg1$.  Then, we may realize that the growth rates for the pure plate model \eqref{Eq-Plates} and thermoelastic plate equations \eqref{Eq-C-Linear-TEP} if $n\leqslant 3$ are the same;  the growth rate $\log t$ in the pure plate equation \eqref{Eq-Plates} if $n=4$ can be improved by the bounded one due to the Fourier law of heat conduction from thermoelastic plates \eqref{Eq-C-Linear-TEP};  the decay rates $t^{2-\frac{n}{2}}$ for thermoelastic plate equations if $n\geqslant 5$  is contributed by two portions: the factor $t^2$ comes from the bi-Laplacian operator in the plate model \eqref{Eq-Plates} and another factor $t^{-\frac{n}{2}}$ originates from the heat conduction since the Gaussian kernel. The heat conduction generates decay properties. In conclusion, by the summary in Table \ref{tab:table1}, we may answer the question proposed in the introduction.
\renewcommand\arraystretch{1.4}
\begin{table}[h!]
	\begin{center}
		\caption{Influence from the plate model and the Fourier law of heat conduction}
		\label{tab:table1}
		\begin{tabular}{cccc} 
			\toprule
			\multirow{2}*{Dimensions} & $n\leqslant 3$ & $n=4$ & $n\geqslant 5$\\
			 & (Lower-dimensions) & (Critical-dimension) & (Higher-dimensions)  \\
			\midrule
			Pure plates property & $t^{2-\frac{n}{2}}$ & $\log t$ & -- \\
			Heat's property (the Fourier law) & $t^{-\frac{n}{2}}$& $t^{-2}$& $t^{-\frac{n}{2}}$\\  
			Thermoelastic plate's property & $t^{2-\frac{n}{2}}$ & $1$ & $t^{-\frac{n-4}{2}}$\\
			\hline 
			Crucial influence & Pure plates & Pure plates + the Fourier law & the Fourier law\\
			\bottomrule
			\multicolumn{4}{l}{\emph{$*$The terminology ``property'' specializes the time-dependent coefficient in the $L^2$ estimates of the solution.}} 
		\end{tabular}
	\end{center}
\end{table}

\noindent It is worth noting that all large-time properties in Table \ref{tab:table1} are optimal in the sense of same behaviors for upper and lower bounds of the vertical displacement in the $L^2$ norm. In particular, concerning physical dimensions $n=1,2,3$, the large-time properties of thermoelastic plate equations \eqref{Eq-Linear-TEP} are not influenced by the heat conduction.
\end{remark}
\begin{remark}
The optimal estimates for $(u,\theta)$ not only imply some underlying phenomena of linear thermoelastic plates, but also prepare crucial estimates to study global (in time) existence of solutions to nonlinear models. For examples, \cite{Bezerra-Carbone-Nascimento=2018,Bezerra-Carbone-Nascimento=2019} considered \eqref{Eq-Linear-TEP} with $\ml{N}[u]$ on the right-hand side. Motivated by Theorem \ref{Thm-Linear}, by constructing a time-weighted Sobolev space
\begin{align*}
\ml{Y}_s(T)&:=\sup\limits_{t\in[0,T]}\big((1+t)^{-1+\frac{n}{4}}\|u(t,\cdot)\|_{L^2}+(1+t)^{\frac{n}{4}}\|\theta(t,\cdot)\|_{L^2}\\
&\qquad\qquad\ \ +(1+t)^{\frac{s}{2}+\frac{n}{4}}\left(\|u(t,\cdot)\|_{\dot{H}^{2+s}}+\|\theta(t,\cdot)\|_{\dot{H}^s}\right)\big),
\end{align*}
we may prove global (in time) existence results for the Cauchy problem of thermoelastic plates with some nonlinear functions $\ml{N}[u]$ (e.g. the power-type nonlinearities $|u|^p$ or $||D|^au|^p$ with $0<a<2$) even $\ml{N}[u,\theta]$, whose philosophy is standard and similar to the one in \cite{Palmieri-Reissig=2018}.
\end{remark}

As a by-product, we also can describe asymptotic profiles for large-time. Before stating the corollary, let us introduce two functions
\begin{align}
	J_0(t,x)&:=\frac{1}{(a_0-a_1)^2+a_2^2}\ml{F}^{-1}_{\xi\to x}\left(\frac{1}{|\xi|^2}\left(\mathrm{e}^{-a_0|\xi|^2t}-\cos(a_2|\xi|^2t)\mathrm{e}^{-a_1|\xi|^2t}\right)\right),\label{M0}\\
	J_1(t,x)&:=\frac{1}{(a_0-a_1)^2+a_2^2}\ml{F}^{-1}_{\xi\to x}\left(\frac{\sin(a_2|\xi|^2t)}{a_2|\xi|^2}\mathrm{e}^{-a_1|\xi|^2t}\right),\label{M1}
\end{align} 
and the auxiliary functions with $j=0,1$ serving for the temperature variable as follows:
\begin{align*}
	J_{2+j}(t,x):=\ml{F}^{-1}_{\xi\to x}\left((|\xi|^{-2}\partial_t^2+|\xi|^2)\widehat{J}_j(t,|\xi|)\right),
\end{align*}
where $a_0:=(1+\alpha_-)/3$, $a_1:=(2-\alpha_-)/6$ and $a_2=\sqrt{3}\alpha_+/6$ are positive constants.
\begin{coro}\label{Coro-Linear}
Under the same assumption on initial datum as those in Theorem \ref{Thm-Linear}, the following refined estimates hold for $t\gg1$:
\begin{align*}
	\|u(t,\cdot)-J_0(t,\cdot)P_{\Psi_0}-J_1(t,\cdot)P_{\Psi_1}\|_{L^2_{\chi}}&\lesssim t^{\frac{1}{2}-\frac{n}{4}}\|(u_0,u_1,\theta_0)\|_{L^1\times(L^{1,1})^2},\\
	\|\theta(t,\cdot)-J_2(t,\cdot)P_{\Psi_0}-J_3(t,\cdot)P_{\Psi_1}\|_{L^2_{\chi}}&\lesssim t^{-\frac{1}{2}-\frac{n}{4}}\|(u_0,u_1,\theta_0)\|_{L^1\times(L^{1,1})^2},
\end{align*}
with the combined datum 
\begin{align*}
\Psi_0(x):=2a_1u_1(x)+\theta_0(x)\ \ \mbox{and} \ \ \Psi_1(x):=(a_0^2+a_2^2-a_1^2)u_1(x)+(a_0-a_1)\theta_0(x).
\end{align*}
\end{coro}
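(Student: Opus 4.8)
The plan is to read the profiles off a closed-form Fourier representation of the solution, produced by the very reduction to the third-order equation \eqref{Eq-third-order} that underlies Theorem~\ref{Thm-Linear}. After a Fourier transform, $\widehat{u}(t,\xi)$ solves $\widehat{u}_{ttt}+|\xi|^2\widehat{u}_{tt}+2|\xi|^4\widehat{u}_t+|\xi|^6\widehat{u}=0$ with $\widehat{u}(0,\xi)=\widehat{u}_0(\xi)$, $\widehat{u}_t(0,\xi)=\widehat{u}_1(\xi)$ and $\widehat{u}_{tt}(0,\xi)=-|\xi|^4\widehat{u}_0(\xi)+|\xi|^2\widehat{\theta}_0(\xi)$ (the last one read off from the first line of \eqref{Eq-Linear-TEP}). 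Substituting $\tau=|\xi|^2\mu$ reduces the characteristic polynomial to $\mu^3+\mu^2+2\mu+1$, so the three characteristic roots are \emph{exactly} $-a_0|\xi|^2$ and $(-a_1\pm\mathrm{i}a_2)|\xi|^2$ for all $\xi$, with $a_0,a_1,a_2$ the constants fixed before Corollary~\ref{Coro-Linear}. Solving the $3\times3$ linear system for the coefficients of $\mathrm{e}^{-a_0|\xi|^2t}$, $\cos(a_2|\xi|^2t)\mathrm{e}^{-a_1|\xi|^2t}$ and $\sin(a_2|\xi|^2t)\mathrm{e}^{-a_1|\xi|^2t}$ — using the algebraic relations tying $a_0,a_1,a_2$ to $\alpha_\pm$ (e.g.\ $a_0(a_1^2+a_2^2)=1$ and $a_0-a_1=\alpha_-/2$) — one finds that the coefficient of $\widehat{u}_1$ equals $2a_1\widehat{J}_0(t,\xi)+(a_0^2+a_2^2-a_1^2)\widehat{J}_1(t,\xi)$, the coefficient of $\widehat{\theta}_0$ equals $\widehat{J}_0(t,\xi)+(a_0-a_1)\widehat{J}_1(t,\xi)$, and the coefficient of $\widehat{u}_0$, call it $K_0(t,\xi)$, is a linear combination of the three building blocks with \emph{constant} coefficients. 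Hence, for $\xi\neq0$,
\[
\widehat{u}(t,\xi)=K_0(t,\xi)\widehat{u}_0(\xi)+\widehat{J}_0(t,\xi)\widehat{\Psi}_0(\xi)+\widehat{J}_1(t,\xi)\widehat{\Psi}_1(\xi),
\]
which is exactly why $\Psi_0,\Psi_1$ are defined the way they are. Applying $|\xi|^{-2}\partial_t^2+|\xi|^2$ and recalling $\widehat{\theta}=(|\xi|^{-2}\partial_t^2+|\xi|^2)\widehat{u}$ gives, with $\widetilde{K}_0:=(|\xi|^{-2}\partial_t^2+|\xi|^2)K_0$, the companion identity $\widehat{\theta}(t,\xi)=\widetilde{K}_0(t,\xi)\widehat{u}_0(\xi)+\widehat{J}_2(t,\xi)\widehat{\Psi}_0(\xi)+\widehat{J}_3(t,\xi)\widehat{\Psi}_1(\xi)$.

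Next I would record the pointwise bounds on $\ml{Z}_{\intt}(\varepsilon_0)$ that carry the argument. Since $a_0>a_1>0$ and $a_2>0$, setting $s:=|\xi|^2t$ and using $1-\mathrm{e}^{-x}\leqslant x$, $1-\cos y\leqslant y^2/2$ and $|\sin y|\leqslant|y|$ yields $|\mathrm{e}^{-a_0s}-\cos(a_2s)\mathrm{e}^{-a_1s}|\lesssim s\,\mathrm{e}^{-a_1s/2}$, hence $|\widehat{J}_0(t,\xi)|+|\widehat{J}_1(t,\xi)|\lesssim t\,\mathrm{e}^{-c|\xi|^2t}$; moreover $|K_0(t,\xi)|\lesssim\mathrm{e}^{-c|\xi|^2t}$, while the factor $|\xi|^4$ produced by $\partial_t^2$ is absorbed by $|\xi|^{-2}$ against the $|\xi|^{-2}$ already inside $\widehat{J}_j$, so $|\widehat{J}_2(t,\xi)|+|\widehat{J}_3(t,\xi)|\lesssim\mathrm{e}^{-c|\xi|^2t}$ and $|\widetilde{K}_0(t,\xi)|\lesssim|\xi|^2\mathrm{e}^{-c|\xi|^2t}$. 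Finally, the elementary mean-value estimate $|\widehat{\Psi}_j(\xi)-P_{\Psi_j}|=|\widehat{\Psi}_j(\xi)-\widehat{\Psi}_j(0)|\leqslant|\xi|\,\|\Psi_j\|_{L^{1,1}}$ together with $\|\Psi_j\|_{L^{1,1}}\lesssim\|u_1\|_{L^{1,1}}+\|\theta_0\|_{L^{1,1}}$.

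With these at hand the corollary is immediate. Subtracting the profile from the representation above gives, on $\ml{Z}_{\intt}(\varepsilon_0)$,
\[
\widehat{u}(t,\xi)-\widehat{J}_0(t,\xi)P_{\Psi_0}-\widehat{J}_1(t,\xi)P_{\Psi_1}=K_0(t,\xi)\widehat{u}_0(\xi)+\widehat{J}_0(t,\xi)\big(\widehat{\Psi}_0(\xi)-P_{\Psi_0}\big)+\widehat{J}_1(t,\xi)\big(\widehat{\Psi}_1(\xi)-P_{\Psi_1}\big),
\]
so that, by Plancherel together with $\|\widehat{g}\|_{L^\infty}\leqslant\|g\|_{L^1}$ and the bounds above, the $L^2_{\chi}$-norm of the left-hand side is
\[
\lesssim\|u_0\|_{L^1}\big\|\mathrm{e}^{-c|\xi|^2t}\big\|_{L^2(\ml{Z}_{\intt}(\varepsilon_0))}+\big(\|u_1\|_{L^{1,1}}+\|\theta_0\|_{L^{1,1}}\big)\big\|\,|\xi|\,t\,\mathrm{e}^{-c|\xi|^2t}\big\|_{L^2(\ml{Z}_{\intt}(\varepsilon_0))}.
\]
By the standard asymptotics $\int_0^{\varepsilon_0}r^{\alpha}\mathrm{e}^{-cr^2t}\,\mathrm{d}r\simeq t^{-(\alpha+1)/2}$ for $t\gg1$, the first factor is $\lesssim t^{-n/4}$ and the second is $\lesssim t\cdot t^{-(n+2)/4}=t^{\frac12-\frac n4}$; since $t^{-n/4}\leqslant t^{\frac12-\frac n4}$, the estimate for $u$ follows. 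The estimate for $\theta$ is obtained verbatim from the companion identity, with $\widetilde{K}_0,\widehat{J}_2,\widehat{J}_3$ in place of $K_0,\widehat{J}_0,\widehat{J}_1$: the $u_0$-term then contributes $\|\,|\xi|^2\mathrm{e}^{-c|\xi|^2t}\|_{L^2(\ml{Z}_{\intt}(\varepsilon_0))}\lesssim t^{-1-\frac n4}$ and the $\Psi_j$-terms contribute $\|\,|\xi|\,\mathrm{e}^{-c|\xi|^2t}\|_{L^2(\ml{Z}_{\intt}(\varepsilon_0))}\lesssim t^{-\frac12-\frac n4}$, which is the claimed rate.

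The one genuinely non-routine point is the first step: checking that the characteristic roots are \emph{exactly} $|\xi|^2$ times the Cardano roots of $\mu^3+\mu^2+2\mu+1$ — so that the exponential/oscillatory kernels in \eqref{M0}--\eqref{M1} are the true building blocks rather than leading-order surrogates — and then executing the $3\times3$ linear solve cleanly enough to recognize the two combinations $2a_1\widehat{J}_0+(a_0^2+a_2^2-a_1^2)\widehat{J}_1$ and $\widehat{J}_0+(a_0-a_1)\widehat{J}_1$, which is precisely what forces the form of $\Psi_0,\Psi_1$ and of the auxiliary kernels $J_2,J_3$. Everything downstream is soft: Plancherel restricted to $\ml{Z}_{\intt}(\varepsilon_0)$, the mean-value bound for $\widehat{\Psi}_j-P_{\Psi_j}$ (this is exactly where the extra $L^{1,1}$ regularity, and the gain of one power of $t^{-1/2}$, come from), the observation that the $u_0$-contribution is of lower order and hence swallowed by the remainder, and Gaussian-type integral estimates.
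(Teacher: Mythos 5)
Your proposal is correct and follows essentially the same route as the paper: reduce to the third-order equation, write $\widehat{u}$ explicitly via the three Cardano-type characteristic roots $-a_0|\xi|^2$ and $(-a_1\pm\mathrm{i}a_2)|\xi|^2$, observe that the coefficient kernels of $\widehat{u}_1$ and $\widehat{\theta}_0$ combine into $\widehat{J}_0\widehat{\Psi}_0+\widehat{J}_1\widehat{\Psi}_1$ (which the paper records via $\widehat{K}_1=2a_1\widehat{J}_0+(a_0^2+a_2^2-a_1^2)\widehat{J}_1$ and $|\xi|^2\widehat{K}_2=\widehat{J}_0+(a_0-a_1)\widehat{J}_1$), subtract the means, invoke $|\widehat{\Psi}_j(\xi)-P_{\Psi_j}|\lesssim|\xi|\|\Psi_j\|_{L^{1,1}}$, and close with Plancherel and Gaussian integrals. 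The only small stylistic deviation is that you estimate $|\widehat{J}_j|\lesssim t\,\mathrm{e}^{-c|\xi|^2t}$ directly, whereas the paper splits further to invoke its Corollary \ref{Coro-new} for the $\sin$-part — both give the same rate $t^{1/2-n/4}$ since the extra factor $|\xi|$ from the mean-value estimate already removes the $|\xi|^{-2}$ singularity, so your simplification is harmless.
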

\begin{remark}
	The refined estimates in Corollary \ref{Coro-Linear} tell us that the solution $(u,\theta)^{\mathrm{T}}$ behaves like
	\begin{align*}
		\begin{pmatrix}
			J_0(t,x) & J_1(t,x)\\
			\big(|D|^{-2}\partial_t^2+|D|^2\big)J_0(t,x) & \big(|D|^{-2}\partial_t^2+|D|^2\big)J_1(t,x)\\
		\end{pmatrix}
	\begin{pmatrix}
			P_{\Psi_0} \\
		P_{\Psi_1} \\
	\end{pmatrix},
	\end{align*} as $t \to \infty$ in the $L^2$ framework, which gives the first-order profile. These dominant functions $J_0(t,x)$ and $J_1(t,x)$ can be understood by a linear combination of the diffusion-plates 
	\begin{align*}
		\ml{F}^{-1}_{\xi\to x}\left(
		\cos(a_2|\xi|^2t)
		\mathrm{e}^{-a_1|\xi|^2t}
		\right),\ \ 		\ml{F}^{-1}_{\xi\to x}\left(
		\frac{\sin(a_2|\xi|^2t)}{a_2|\xi|^2}
		\mathrm{e}^{-a_1|\xi|^2t}
		\right) 
	\end{align*}
	and the Gaussian kernel
	\begin{align*}
		\ml{F}^{-1}_{\xi\to x}\left(
		\mathrm{e}^{-a_0 |\xi|^2t}
		\right)
		=  
		\frac{1}{(4 \pi a_0 t)^{n/2}}
		\mathrm{e}^{-\frac{|x|^{2}}{4 a_0 t}}
	\end{align*}
	with the singularity $|\xi|^{-2}$ near $|\xi|=0$.
\end{remark}
\begin{remark} We may ensure an improvement $t^{-\frac{1}{2}}$ as $t\gg1$ by subtracting the corresponding profiles of $(u(t,\cdot),\theta(t,\cdot))$ in the $L^2$ norm for all dimensions. The effect is similar to generalized diffusion phenomena (see, for instance, \cite{Radu-Todorova-Yodanov=2016}).
\end{remark}

\subsection{Asymptotic behaviors of the solution for the small thermal parameter}
We state global (in time) convergence results between $u^{\epsilon}=u^{\epsilon}(t,x)$ of thermoelastic plate equations \eqref{Eq-Linear-TEP} and $u^0=u^0(t,x)$ of structurally damped plate equation \eqref{Eq-u,theta,0} in the $L^2$ framework.
\begin{theorem}\label{Thm-singu-lim}
	Let us consider thermoelastic plate equations \eqref{Eq-Linear-TEP} with the small thermal parameter $0<\epsilon\ll 1$, and structurally damped plate equation \eqref{Eq-u,theta,0}.
	\begin{itemize}
		\item Let $u_0\in H^2\cap L^1$, $u_1\in L^2\cap L^1$ and $u_1+\theta_0\in L^2$. Then, the convergence of energy terms holds:
		\begin{align*}
			&\sup\limits_{t>0}\left(\,\left\|u_t^{\epsilon}(t,\cdot)-u_t^{0}(t,\cdot)\right\|_{L^2}+\left\|\Delta u^{\epsilon}(t,\cdot)-\Delta u^0(t,\cdot)\right\|_{L^2}\right)\\
			&\qquad\leqslant C\epsilon \left(\|u_1+\theta_0\|_{L^2}+\|u_0\|_{H^2\cap L^1}+\|u_1\|_{L^2\cap L^1}\right)
		\end{align*}
	for any $n\geqslant 1$, where $C>0$ is independent of $t$ and $\epsilon$.
	\item Let $u_0\in L^2\cap L^1$, $u_1\in L^2\cap L^{1,1}$ with $|P_{u_1}|=0$ and $u_1+\theta_0\in \dot{H}^{-2}$. Then, the convergence of the vertical displacement holds:
	\begin{align*}
		\sup\limits_{t>0}\left\|u^{\epsilon}(t,\cdot)-u^0(t,\cdot)\right\|_{L^2}\leqslant C \epsilon\left(\|u_1+\theta_0\|_{\dot{H}^{-2}}+\|u_0\|_{L^2\cap L^1}+\|u_1\|_{L^2\cap L^{1,1}}\right)
	\end{align*}
for any $n\geqslant 3$, where $C>0$ is independent of $t$ and $\epsilon$.
	\end{itemize}
\end{theorem}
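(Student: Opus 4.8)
\textbf{Proof proposal for Theorem \ref{Thm-singu-lim}.}

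The plan is to work entirely on the Fourier side, since both \eqref{Eq-Linear-TEP} and \eqref{Eq-u,theta,0} are linear with constant coefficients. Write $w:=u^{\epsilon}-u^0$; then after Fourier transform $\hat w$ solves the scalar equation obtained by subtracting the symbol of \eqref{Eq-u,theta,0} from the (reduced) symbol of \eqref{Eq-Linear-TEP}. Concretely, I would first eliminate $\theta^{\epsilon}$ from \eqref{Eq-Linear-TEP} as in the reduction procedure used for \eqref{Eq-third-order}, obtaining a third-order (in $t$) ODE for $\hat u^{\epsilon}(t,\xi)$ with coefficients depending on $\epsilon$ and $|\xi|^2$; letting $\epsilon\downarrow 0$ in that ODE formally collapses it to the second-order equation for $\hat u^0$. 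Subtracting, $\hat w$ satisfies an inhomogeneous version of the $\hat u^0$-equation whose source is $\epsilon$ times explicit combinations of $\hat u^{\epsilon}$, its time derivatives, and $|\xi|^2$-factors, plus a contribution from the data mismatch (the missing datum $\theta_0$ enters only through $u_1+\theta_0$). The key point is that the forcing carries a clean factor $\epsilon$, so once I have uniform-in-$\epsilon$ (and uniform-in-$t$) bounds on the ``profile'' quantities of $u^{\epsilon}$ that appear in that source, Duhamel's formula for the $\hat u^0$-propagator gives the $O(\epsilon)$ estimate.

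The first bullet (energy terms, all $n\ge 1$) I would handle by a Fourier-space energy/Lyapunov functional for $\hat w$, exactly mirroring the energy method for structurally damped plates in \cite{Pham-Kainane-Reissig=2015,Dao-Reissig=2019}: build $E(t,\xi)\simeq |\hat w_t|^2+|\xi|^4|\hat w|^2$ (plus a small cross term $|\xi|^2\mathrm{Re}(\hat w_t\overline{\hat w})$ to produce the damping), differentiate, and use the structural dissipation $-|\xi|^2|\hat w_t|^2$ to absorb the $\epsilon$-source after Young's inequality. The source involves $\epsilon|\xi|^2\hat\theta^{\epsilon}$ and $\epsilon$ times lower-order terms, and the already-established decay of $\hat\theta^{\epsilon}$ (or, at this stage, just its boundedness together with the zone decomposition $\ml{Z}_{\intt},\ml{Z}_{\bdd},\ml{Z}_{\extt}$) lets me bound $\int_0^t$ of the bad terms by $C\epsilon^2$ times the data norms, uniformly in $t$; integrating over $\xi$ and using Plancherel finishes it. For the initial data one uses $u^{\epsilon}(0)=u^0(0)$ and $u^{\epsilon}_t(0)=u^0_t(0)$, so $E(0,\xi)=0$ and the entire bound comes from the forcing — this is what makes the constant genuinely $O(\epsilon)$ rather than $O(1)$.

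The second bullet (the displacement itself, $n\ge 3$) is the delicate one, because dividing by $|\xi|^2$ to pass from $\Delta w$ to $w$ reintroduces exactly the $|\xi|^{-2}$ low-frequency singularity flagged in Remark \ref{Rem-3.3} and in the introduction's remark; this is why $n\ge 3$ and why $|P_{u_1}|=0$ and $u_1+\theta_0\in\dot H^{-2}$ are imposed. On $\ml{Z}_{\intt}$ I would use the Duhamel representation with the $\hat u^0$-propagators $J_0$-type kernels (which behave like $|\xi|^{-2}$ times a Gaussian/diffusion-plate factor), so that $|\hat w(t,\xi)|\lesssim \epsilon |\xi|^{-2}\int_0^t e^{-c|\xi|^2 s}\,(\text{source in }\xi)\,ds\lesssim \epsilon |\xi|^{-4}(\text{source})$, and the source's own $|\xi|^2$-factors together with the hypotheses $|P_{u_1}|=0$ (giving $|\hat u_1(\xi)|\lesssim |\xi|\,\|u_1\|_{L^{1,1}}$) and $u_1+\theta_0\in\dot H^{-2}$ supply the two extra powers of $|\xi|$ needed so that $\int_{|\xi|\le\varepsilon_0}|\hat w|^2\,d\xi$ converges; the borderline exponent is $r^{n-1}\cdot r^{-2}$, i.e. convergence precisely for $n\ge 3$. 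On $\ml{Z}_{\bdd}\cup\ml{Z}_{\extt}$ there is no singularity and the symbols of $u^{\epsilon}$ and $u^0$ are both exponentially stable (with $\epsilon$-uniform rates for $\epsilon$ small), so the difference is controlled by $\epsilon$ directly via a routine continuity/perturbation argument on the characteristic roots. I expect the main obstacle to be the bookkeeping in the interior zone: one has to track enough powers of $|\xi|$ through the reduced third-order ODE and the Duhamel kernel to see that every term in the $\epsilon$-source, after integration against the $|\xi|^{-2}$-singular propagator, still lands in $L^2(\ml{Z}_{\intt})$ uniformly in $t$ — in particular verifying that the time integral $\int_0^\infty e^{-c|\xi|^2 s}\,ds\simeq |\xi|^{-2}$ does not cost an extra, fatal power of $|\xi|^{-1}$ beyond what the hypotheses $|P_{u_1}|=0$ and $u_1+\theta_0\in\dot H^{-2}$ can pay for.
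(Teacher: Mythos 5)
Your high-level plan (Fourier side, reduction to a third-order ODE, an energy/Lyapunov argument, low-frequency zone analysis for the second bullet) is in the right spirit, but the way you set up the difference equation for $\hat w = \hat u^{\epsilon}-\hat u^{0}$ has a genuine gap. Subtracting the reduced $\epsilon$-equation from the $\epsilon=0$ equation as you propose yields a source proportional to $\epsilon\,\hat u^{\epsilon}_{ttt}$ (plus lower-order $\epsilon\,|\xi|^{4}\hat u^{\epsilon}_{t}$). This is not ``clean $O(\epsilon)$ times bounded quantities'': from the reduced equation and the initial data one computes
\begin{align*}
\hat u^{\epsilon}_{ttt}(0,\xi) = -\epsilon^{-1}|\xi|^{4}\big(\hat u_1(\xi)+\hat\theta_0(\xi)\big) - |\xi|^{4}\hat u_1(\xi),
\end{align*}
so the ``source'' is $O(1)$ (not $O(\epsilon)$) near $t=0$ whenever $u_1+\theta_0\not\equiv 0$, and more generally any attempt to bound $\hat u^{\epsilon}_{ttt}$ requires $\epsilon$-uniform a priori estimates on the thermoelastic solution itself, making the argument circular. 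Relatedly, your claim that $E(0,\xi)=0$ because $\hat w(0)=\hat w_t(0)=0$ misses the nonzero mismatch at the level of second time-derivatives, $\hat w_{tt}(0,\xi) = |\xi|^{2}(\hat u_1(\xi)+\hat\theta_0(\xi))$, which is precisely where $\|u_1+\theta_0\|_{L^2}$ and $\|u_1+\theta_0\|_{\dot H^{-2}}$ enter the stated estimates.

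The paper sidesteps both problems with one key structural choice: instead of subtracting after reduction, it subtracts at the level of the thermoelastic \emph{system}, writing $U=u^{\epsilon}-u^{0}$ and $\Theta=\theta^{\epsilon}+u^{0}_{t}$ and observing that $(U,\Theta)$ solves the \emph{same} inhomogeneous thermoelastic plate system with vanishing $U(0)=U_t(0)=0$, nonzero $\Theta(0)=u_1+\theta_0$, and source $\epsilon F$ on the temperature equation where $F=-\theta^{0}_{t}=u^{0}_{tt}=\Delta u^{0}_{t}-\Delta^{2}u^{0}$ depends \emph{only on the known limit solution} $u^{0}$, not on $u^{\epsilon}$. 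With $F$ controlled by the explicit solution formulas for the structurally damped plate, there is no circularity. The paper then reduces the error system to the $\epsilon$-dependent third-order ODE $\epsilon\hat U_{ttt}+|\xi|^{2}\hat U_{tt}+(1+\epsilon)|\xi|^{4}\hat U_{t}+|\xi|^{6}\hat U=\epsilon|\xi|^{2}\hat F$ and constructs a tailor-made energy $\hat E$ in the variables $\hat U$, $\hat V=\hat U_t$, $\hat W=\hat V_t$ (weighted by $\epsilon$ and $|\xi|$) whose time-derivative is $\leqslant C\epsilon^{2}|\xi|^{2}|\hat F|^{2}$; combined with $\hat E(0,\xi)\simeq\epsilon^{2}|\xi|^{4}|\hat u_1+\hat\theta_0|^{2}$ and $|\hat U_t|^{2}+|\xi|^{4}|\hat U|^{2}\lesssim|\xi|^{-4}\hat E$, this produces the $O(\epsilon)$ estimate directly, and the low-frequency $|\xi|^{-2}$ bookkeeping (using $|P_{u_1}|=0$, $u_1+\theta_0\in\dot H^{-2}$, $n\geqslant 3$) proceeds essentially as you anticipated. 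In short: your zone analysis and the overall $O(\epsilon)$ target are on the right track, but you need to replace the ``third-order difference with $u^{\epsilon}$-dependent source'' by the ``thermoelastic system for $(U,\Theta)$ with $u^{0}$-dependent source'' to have a provable argument.
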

\begin{remark}
	The restriction $u_1\in L^{1,1}$ with $|P_{u_1}|=0$ is to guarantee global (in time) convergence result for $n\geqslant 3$. Instead of weighted $L^1$ hypothesis in our theorem, by assuming $u_1\in L^1$, we are able to get the convergence result for $n\geqslant 5$ only since
	\begin{align*}
	\int_0^t\left\|\chi_{\intt}(\xi)|\xi|^{-1}\mathrm{e}^{-c|\xi|^2\tau}\right\|_{L^2}^2\mathrm{d}\tau\lesssim\int_0^{\infty}(1+\tau)^{1-\frac{n}{2}}\mathrm{d}\tau<\infty,
	\end{align*}
when $1-n/2<-1$ that is $n>4$. If one considers a stronger assumption $u_1\in\dot{H}^{-2}$, we can arrive at a global (in time) convergence for any $n\geqslant 1$.
\end{remark}
\begin{remark}
The last theorem shows that
\begin{align*}
	u^{\epsilon}\to u^0\ \ &\mbox{in}\ \ L^{\infty}\big([0,\infty),H^2(\mb{R}^n)\big)\ \ \mbox{for}\ \ n\geqslant 3,\\
		u_t^{\epsilon}\to u_t^0,\ \ \Delta u^{\epsilon}\to\Delta u^0\ \ &\mbox{in}\ \ L^{\infty}\big([0,\infty),L^2(\mb{R}^n)\big)\,\ \ \mbox{for}\ \ n\geqslant 1,
\end{align*}
as $\epsilon\downarrow 0$ with the rate of convergence $\epsilon$. It implies a new relation for the thin plates with different dissipative mechanisms: the thermal damping versus the structural damping. By using the similar method to our proof additionally with the Hausdorff-Young inequality in the phase space, we expect some convergence results still hold in $L^{\infty}([0,\infty),H^{2,p}(\mb{R}^n))$ with $2\leqslant p<\infty$, even in $L^{\infty}([0,\infty)\times\mb{R}^n)$.
\end{remark}

Before showing  a faster convergence rate, let us introduce a new function
\begin{align}\label{uI1}
u^{I,1}(t,x)=-\frac{2}{\sqrt{3}}\int_0^t\sin\left(\frac{\sqrt{3}}{2}|D|^2(t-\tau)\right)\mathrm{e}^{-\frac{1}{2}|D|^2(t-\tau)}\left(u_t^0(\tau,x)-\Delta u^0(\tau,x)\right)\mathrm{d}\tau
\end{align}
as a second-order profile for the small $\epsilon>0$. Actually, $u^{I,1}(t,x)$ solves structurally damped plates with the source term $\Delta u^0_t(t,x)-\Delta^2u^0(t,x)$ on the right-hand side and vanishing initial conditions.
\begin{theorem}\label{Thm-Second}
		Let us consider thermoelastic plate equation \eqref{Eq-Linear-TEP} with $\theta_0(x)\equiv-u_1(x)$ as well as the small thermal parameter $0<\epsilon\ll 1$, and structurally damped plate equation \eqref{Eq-u,theta,0}.
	\begin{itemize}
		\item Let $u_0\in H^2\cap L^1$ and $u_1\in L^2\cap L^1$. Then, the further convergence of energy terms holds:
		\begin{align*}
			&\sup\limits_{t>0}\left(\,\left\|u_t^{\epsilon}(t,\cdot)-u_t^{0}(t,\cdot)-\epsilon u_t^{I,1}(t,\cdot)\right\|_{L^2}+\left\|\Delta u^{\epsilon}(t,\cdot)-\Delta u^0(t,\cdot)-\epsilon \Delta u^{I,1}(t,\cdot)\right\|_{L^2}\right)\\
			&\qquad\leqslant C\epsilon^2 \left(\|u_0\|_{H^2\cap L^1}+\|u_1\|_{L^2\cap L^1}\right)
		\end{align*}
		for any $n\geqslant 1$, where $C>0$ is independent of $t$ and $\epsilon$.
		\item Let $u_0\in L^2\cap L^1$ and $u_1\in L^2\cap L^{1,1}$ with $|P_{u_1}|=0$. Then, the further convergence of the vertical displacement holds:
		\begin{align*}
			\sup\limits_{t>0}\left\|u^{\epsilon}(t,\cdot)-u^0(t,\cdot)-\epsilon u^{I,1}(t,\cdot)\right\|_{L^2}\leqslant C \epsilon^2\left(\|u_0\|_{L^2\cap L^1}+\|u_1\|_{L^2\cap L^{1,1}}\right)
		\end{align*}
		for any $n\geqslant 3$, where $C>0$ is independent of $t$ and $\epsilon$.
	\end{itemize}
\end{theorem}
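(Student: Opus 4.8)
\medskip
\noindent\emph{Proof proposal.} The plan is to work with the partial Fourier transform and the reduction methodology, and to estimate the corrector $W^{\epsilon}:=u^{\epsilon}-u^{0}-\epsilon u^{I,1}$ directly. Eliminating the temperature from \eqref{Eq-Linear-TEP} (the $\epsilon$-dependent analogue of the reduction leading to \eqref{Eq-third-order}) shows that $\widehat{u}^{\epsilon}=\widehat{u}^{\epsilon}(t,\xi)$ solves
\[
\epsilon\,\partial_t^{3}\widehat{u}^{\epsilon}+|\xi|^{2}\partial_t^{2}\widehat{u}^{\epsilon}+(1+\epsilon)|\xi|^{4}\partial_t\widehat{u}^{\epsilon}+|\xi|^{6}\widehat{u}^{\epsilon}=0,
\]
with $\widehat{u}^{\epsilon}(0,\xi)=\widehat{u}_{0}$, $\partial_t\widehat{u}^{\epsilon}(0,\xi)=\widehat{u}_{1}$ and, using $\theta_{0}\equiv-u_{1}$, $\partial_t^{2}\widehat{u}^{\epsilon}(0,\xi)=-|\xi|^{4}\widehat{u}_{0}+|\xi|^{2}\widehat{\theta}_{0}=-|\xi|^{4}\widehat{u}_{0}-|\xi|^{2}\widehat{u}_{1}$; dividing this equation by $|\xi|^{2}$ and setting $\epsilon=0$ recovers the structurally damped plate equation \eqref{Eq-u,theta,0} for $\widehat{u}^{0}$. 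Two identities following from the latter are needed: $\partial_t^{3}\widehat{u}^{0}=|\xi|^{6}\widehat{u}^{0}$ (differentiate twice and back-substitute), and the fact --- which is exactly what identifies $u^{I,1}$ in \eqref{uI1} --- that $\widehat{u}^{I,1}$ is the unique solution with vanishing Cauchy data of $\partial_t^{2}\widehat{u}^{I,1}+|\xi|^{2}\partial_t\widehat{u}^{I,1}+|\xi|^{4}\widehat{u}^{I,1}=-|\xi|^{2}\widehat{u_{t}^{0}}-|\xi|^{4}\widehat{u}^{0}$, i.e.\ of structurally damped plates with source $\Delta u_{t}^{0}-\Delta^{2}u^{0}$.

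Inserting the ansatz $\widehat{u}^{\epsilon}=\widehat{u}^{0}+\epsilon\widehat{u}^{I,1}+\widehat{W}^{\epsilon}$ into the third-order equation and invoking the two identities above, the residuals of order $\epsilon^{0}$ and $\epsilon^{1}$ cancel --- this is precisely how $u^{I,1}$ is designed --- which leaves
\[
\epsilon\,\partial_t^{3}\widehat{W}^{\epsilon}+|\xi|^{2}\partial_t^{2}\widehat{W}^{\epsilon}+(1+\epsilon)|\xi|^{4}\partial_t\widehat{W}^{\epsilon}+|\xi|^{6}\widehat{W}^{\epsilon}=-\epsilon^{2}\big(\partial_t^{3}\widehat{u}^{I,1}+|\xi|^{4}\partial_t\widehat{u}^{I,1}\big),
\]
together with $\widehat{W}^{\epsilon}(0,\xi)=\partial_t\widehat{W}^{\epsilon}(0,\xi)=0$ exactly and $\partial_t^{2}\widehat{W}^{\epsilon}(0,\xi)=-\epsilon\,\partial_t^{2}\widehat{u}^{I,1}(0,\xi)=\epsilon\big(|\xi|^{4}\widehat{u}_{0}+|\xi|^{2}\widehat{u}_{1}\big)$, which is only of order $\epsilon$. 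Hence $\widehat{W}^{\epsilon}$ is the superposition of the response to an $O(\epsilon^{2})$ source and the response to an $O(\epsilon)$ initial acceleration, both expressed through the fundamental solutions of the third-order operator.

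The structural feature that makes the argument go through is that, after the parabolic rescaling $\lambda=|\xi|^{2}\mu$, the characteristic equation reduces to $\epsilon\mu^{3}+\mu^{2}+(1+\epsilon)\mu+1=0$, whose roots satisfy $\mu_{1}(\epsilon),\mu_{2}(\epsilon)\to-\tfrac12\pm i\tfrac{\sqrt{3}}{2}$ (the structurally damped roots) and $\mu_{3}(\epsilon)=-\epsilon^{-1}+O(1)\to-\infty$, all with $\operatorname{Re}\mu_{j}\leqslant-c<0$ uniformly in $0<\epsilon\ll1$. Writing the fundamental solution $\ml{E}^{\epsilon}$ with data $(0,0,1)$ in Lagrange form and using $\mu_{1}-\mu_{3},\mu_{2}-\mu_{3}\simeq\epsilon^{-1}$ while $\mu_{1}-\mu_{2}\simeq1$, one obtains the $\epsilon$-uniform bounds $|\partial_t^{\,j}\ml{E}^{\epsilon}(t,\xi)|\lesssim\epsilon\,|\xi|^{2j-4}\mathrm{e}^{-c|\xi|^{2}t}$ for $j=0,1$, the decisive point being the explicit factor $\epsilon$; the corresponding forcing kernel is $\epsilon^{-1}\ml{E}^{\epsilon}$, so $\widehat{W}^{\epsilon}=\ml{E}^{\epsilon}(\cdot,\xi)\,\partial_t^{2}\widehat{W}^{\epsilon}(0,\xi)-\epsilon\int_{0}^{t}\ml{E}^{\epsilon}(t-\tau,\xi)\big(\partial_t^{3}\widehat{u}^{I,1}+|\xi|^{4}\partial_t\widehat{u}^{I,1}\big)(\tau,\xi)\,\mathrm{d}\tau$. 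Combined with the standard estimates for \eqref{Eq-u,theta,0}, which yield $|\partial_t^{3}\widehat{u}^{I,1}(\tau,\xi)|+|\xi|^{4}|\partial_t\widehat{u}^{I,1}(\tau,\xi)|\lesssim|\xi|^{4}\big(|\widehat{u}_{1}(\xi)|+|\xi|^{2}|\widehat{u}_{0}(\xi)|\big)\mathrm{e}^{-c|\xi|^{2}\tau}$, a Duhamel computation gives, uniformly in $\epsilon$ and $t$,
\[
|\widehat{W}^{\epsilon}(t,\xi)|\lesssim\epsilon^{2}\big(|\xi|^{-2}|\widehat{u}_{1}(\xi)|+|\widehat{u}_{0}(\xi)|\big)\mathrm{e}^{-c|\xi|^{2}t},\qquad |\partial_t\widehat{W}^{\epsilon}(t,\xi)|+|\xi|^{2}|\widehat{W}^{\epsilon}(t,\xi)|\lesssim\epsilon^{2}\big(|\widehat{u}_{1}(\xi)|+|\xi|^{2}|\widehat{u}_{0}(\xi)|\big)\mathrm{e}^{-c|\xi|^{2}t}.
\]
Taking $L^{2}(\mb{R}^{n})$ norms and splitting into $\ml{Z}_{\intt}(\varepsilon_{0})$, $\ml{Z}_{\bdd}(\varepsilon_{0},N_{0})$, $\ml{Z}_{\extt}(N_{0})$: on the last two zones $\mathrm{e}^{-c|\xi|^{2}t}$ produces $t$-uniform bounds in terms of $\|u_{0}\|_{H^{2}}$ (resp.\ $\|u_{0}\|_{L^{2}}$) and $\|u_{1}\|_{L^{2}}$; on $\ml{Z}_{\intt}$, using $|\widehat{u}_{0}|\lesssim\|u_{0}\|_{L^{1}}$, $|\widehat{u}_{1}|\lesssim\|u_{1}\|_{L^{1}}$ and $\int_{0}^{\varepsilon_{0}}r^{n-1}\,\mathrm{d}r<\infty$ the energy-term statement follows for every $n\geqslant1$, whereas the displacement statement requires controlling $\big\||\xi|^{-2}\chi_{\intt}(\xi)\mathrm{e}^{-c|\xi|^{2}t}\widehat{u}_{1}\big\|_{L^{2}}$, which is $t$-uniform only for $n\geqslant5$ in general --- but under $|P_{u_{1}}|=0$ the elementary bound $|\widehat{u}_{1}(\xi)|\leqslant|\xi|\,\|u_{1}\|_{L^{1,1}}$ lowers the singularity to $|\xi|^{-1}$ and $\int_{0}^{\varepsilon_{0}}r^{n-3}\,\mathrm{d}r<\infty$ precisely when $n\geqslant3$. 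Assembling the three zones yields the asserted $O(\epsilon^{2})$ convergence.

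I expect the main obstacle to be the $\epsilon$-uniform estimates for the fundamental solutions of the singularly perturbed third-order operator --- in particular extracting the explicit power $\epsilon$ in $\ml{E}^{\epsilon}$ so that the $O(\epsilon)$ mismatch carried by $\partial_t^{2}\widehat{W}^{\epsilon}(0,\xi)$ is upgraded to an $O(\epsilon^{2})$ error, and handling the fast ``boundary-layer'' mode $\mathrm{e}^{-c|\xi|^{2}t/\epsilon}$ coming from $\mu_{3}$ --- together with carefully propagating the low-frequency behaviour of $\widehat{u}^{I,1}$ and its time derivatives (which inherit the $|\xi|^{-2}$ singularity of the structurally damped sine kernel), so that the hypotheses $|P_{u_{1}}|=0$ and $u_{1}\in L^{1,1}$ restore $L^{2}_{\xi}$-integrability exactly at the threshold dimension $n=3$. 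Once these estimates are in place, the remaining zone decomposition is routine and parallel to the proof of Theorem \ref{Thm-Linear}.
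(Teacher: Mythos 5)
Your argument is correct and reaches the same conclusion, but it takes a genuinely different route from the paper. Both proofs pass to the Fourier side and exploit the reduced third-order (in time) scalar equation $\epsilon\,\partial_t^3\hat{u}^\epsilon + |\xi|^2\partial_t^2\hat{u}^\epsilon + (1+\epsilon)|\xi|^4\partial_t\hat{u}^\epsilon + |\xi|^6\hat{u}^\epsilon = 0$; the divergence is in what comes next. The paper goes back to a first-order system $(\widehat{U}^s,\widehat{V}^s,\widehat{W}^s)$ and constructs an explicit Lyapunov functional $\widehat{E}^s$ whose time derivative is bounded by $C\epsilon^4|\xi|^2|\widehat{F}^s|^2$; since $\Theta^s(0)=0$ (this is where $\theta_0\equiv -u_1$ enters in their framework) the initial energy vanishes, and the $\epsilon^2$ rate falls out of the $\epsilon^2$ prefactor of the source after integration. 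You instead keep the scalar equation, observe the clean identities $\partial_t^3\hat{u}^0=|\xi|^6\hat{u}^0$ and the defining equation of $\hat{u}^{I,1}$, and show that the $O(1)$ and $O(\epsilon)$ residuals cancel identically, so the corrector $\widehat{W}^\epsilon$ is driven only by an $O(\epsilon^2)$ forcing and an $O(\epsilon)$ initial acceleration; you then upgrade the latter to $O(\epsilon^2)$ via the $\epsilon$-uniform Lagrange bound $|\mathcal{E}^\epsilon(t,\xi)|\lesssim\epsilon\,|\xi|^{-4}\mathrm{e}^{-c|\xi|^2 t}$ extracted from the rescaled roots $\mu_1,\mu_2\to -\tfrac12\pm i\tfrac{\sqrt{3}}{2}$, $\mu_3\simeq-\epsilon^{-1}$ (the spacing $|\mu_{1,2}-\mu_3|\simeq\epsilon^{-1}$ is where the decisive power of $\epsilon$ comes from, exactly as you flagged). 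I have checked the cancellations and all the asymptotics of the Lagrange coefficients, and they hold; the boundary-layer mode $\mathrm{e}^{-c|\xi|^2 t/\epsilon}$ is indeed harmless because its coefficient carries an extra $\epsilon$. The hypothesis $\theta_0\equiv-u_1$ enters your proof through the second initial condition $\partial_t^2\hat{W}^\epsilon(0,\xi)=\epsilon(|\xi|^4\hat{u}_0+|\xi|^2\hat{u}_1)$, which without that hypothesis would acquire an $O(1)$ piece $|\xi|^2(\hat{\theta}_0+\hat{u}_1)$ and destroy the rate — so you used it in exactly the right place.

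On the trade-offs: the paper's Lyapunov functional avoids computing the characteristic roots and is shorter, but the structure of the energy $\widehat{E}^s$ is opaque and gives no pointwise-in-time decay. Your spectral route is more computational (one has to justify that the roots $\mu_j(\epsilon)$ stay uniformly separated and uniformly in the left half-plane as $\epsilon\downarrow 0$, which is routine but must be said), but in exchange you obtain a genuinely stronger pointwise estimate $|\widehat{W}^\epsilon(t,\xi)|\lesssim\epsilon^2\left(|\hat{u}_0|+|\xi|^{-2}|\hat{u}_1|\right)\mathrm{e}^{-c|\xi|^2 t}$ with explicit exponential time decay, and the mechanism producing the gain (the $\epsilon^{-1}$ spacing of $\mu_3$ from $\mu_1,\mu_2$ in the Lagrange denominators) is transparent. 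The zone decomposition, the role of $|P_{u_1}|=0$ and $u_1\in L^{1,1}$ in lowering the low-frequency singularity from $|\xi|^{-2}$ to $|\xi|^{-1}$, and the resulting threshold $n\geqslant 3$ are the same in both proofs and you handled them correctly.

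One small caveat worth adding if you write this up: the paper's bounds $|\hat{u}^{I,1}(t,\xi)|\lesssim t\,\mathrm{e}^{-\frac12|\xi|^2 t}(|\xi|^2|\hat{u}_0|+|\hat{u}_1|)$ and $|\hat{u}^{I,1}_t(t,\xi)|\lesssim t|\xi|^2\mathrm{e}^{-\frac12|\xi|^2 t}(|\xi|^2|\hat{u}_0|+|\hat{u}_1|)$ (from Duhamel applied to the inhomogeneous structurally damped equation) are exactly what you need to justify your source estimate $|\partial_t^3\hat{u}^{I,1}|+|\xi|^4|\partial_t\hat{u}^{I,1}|\lesssim|\xi|^4(|\xi|^2|\hat{u}_0|+|\hat{u}_1|)\mathrm{e}^{-c|\xi|^2\tau}$, but only after absorbing the polynomial factor $t$ into the exponential (i.e.\ using $t|\xi|^2\mathrm{e}^{-\frac14|\xi|^2 t}\lesssim 1$ and relaxing the constant $c$); you should make this absorption explicit, since otherwise the Duhamel convolution produces a $t^2\mathrm{e}^{-c|\xi|^2 t}$ factor that still must be tamed before taking the supremum over $t$.
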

\begin{remark}
	The last theorem shows that
	\begin{align*}
		u^{\epsilon}\to u^0+\epsilon u^{I,1}\ \ &\mbox{in}\ \ L^{\infty}\big([0,\infty),H^{2}(\mb{R}^n)\big)\ \ \mbox{for}\ \ n\geqslant 3,\\
		u_t^{\epsilon}\to u_t^0+\epsilon u_t^{I,1},\ \ \Delta u^{\epsilon}\to\Delta u^0+\epsilon\Delta  u^{I,1}\ \ &\mbox{in}\ \ L^{\infty}\big([0,\infty),L^2(\mb{R}^n)\big)\, \ \ \mbox{for}\ \ n\geqslant 1,
	\end{align*}
	as $\epsilon\downarrow 0$ with the rate of convergence $\epsilon^2$. Comparing Theorem \ref{Thm-singu-lim} with Theorem \ref{Thm-Second}, by subtracting the additional terms in the $L^2$ norm, the rate of convergence has been improved by a factor $\epsilon$.
\end{remark}

\section{Large-time asymptotic behaviors of solutions}\label{Sec-Large-time}
This section will contribute to studies of large-time asymptotic profiles for linear thermoelastic plate equations \eqref{Eq-Linear-TEP}, namely, the proofs of Theorem \ref{Thm-Linear} and Corollary \ref{Coro-Linear}. To begin with, let us act the heat operator $\partial_t-\Delta$ on \eqref{Eq-Linear-TEP}$_1$ and plug \eqref{Eq-Linear-TEP}$_2$ with $\epsilon=1$ into the resultant, leading to
\begin{align*}
0&=(\partial_t-\Delta)(u_{tt}+\Delta^2 u)+\Delta(\partial_t-\Delta)\theta\\
&=u_{ttt}-\Delta u_{tt}+2\Delta^2 u_t-\Delta^3 u,
\end{align*}
with initial data $u_{tt}(0,x)=-\Delta^2 u_0(x)-\Delta\theta_0(x)$.  For this reason, we investigate the following Cauchy problem for the third-order (in time) evolution equation: 
\begin{align}\label{Eq-third-order}
\begin{cases}
u_{ttt}-\Delta u_{tt}+2\Delta^2 u_t-\Delta^3 u=0,&x\in\mb{R}^n,\ t>0,\\
u(0,x)=u_0(x),\ u_t(0,x)=u_1(x),\ u_{tt}(0,x)=-\Delta^2 u_0(x)-\Delta\theta_0(x),&x\in\mb{R}^n.
\end{cases}
\end{align}
\begin{remark}
We analyze the classification of \eqref{Eq-third-order} from the scalar evolution equation's viewpoint. Indeed, the partial differential operator appearing on \eqref{Eq-third-order}, namely,
\begin{align*}
	\ml{L}:=\partial_t^3-\Delta\partial_t^2+2\Delta^2\partial_t-\Delta^3,
\end{align*}
is not of Kovalevskian type (one may check \cite[Section 3.1]{Ebert-Reissig-book}). Its principal part in the sense of Petrovsky is given by the operator $\ml{L}$ itself with the symbol
\begin{align*}
	-i\tau^3-|\zeta|^2\tau^2+2i|\zeta|^4\tau+|\zeta|^6=-i\tau(\tau^2-2|\zeta|^4)-|\zeta|^2(\tau^2-|\zeta|^4).
\end{align*}
For this reason, $\ml{L}$ is not a 2-evolution operator (see \cite[Chapter 3, Definition 3.2]{Ebert-Reissig-book}).
\end{remark}
\noindent After getting some results of the vertical displacement $u=u(t,x)$, according to
\begin{align}\label{Eq-2.1}
\theta=(-\Delta)^{-1}u_{tt}-\Delta u,
\end{align}
we also can derive qualitative (even quantitative) properties of the temperature $\theta=\theta(t,x)$ immediately. Again, the symbol of $(-\Delta)^{-1}$ is $|\xi|^{-2}$.
\subsection{Pointwise estimates of solutions in the Fourier space}
As preparations for deriving $L^2$ estimates of solutions to the linear Cauchy problem \eqref{Eq-third-order}, let us initially focus on the model in the Fourier space, namely,
\begin{align}\label{Eq-third-Fourier}
	\begin{cases}
		\hat{u}_{ttt}+|\xi|^2 \hat{u}_{tt}+2|\xi|^4 \hat{u}_t+|\xi|^6 \hat{u}=0,&\xi\in\mb{R}^n,\ t>0,\\
		\hat{u}(0,\xi)=\hat{u}_0(\xi),\ \hat{u}_t(0,\xi)=\hat{u}_1(\xi),\ \hat{u}_{tt}(0,\xi)=-|\xi|^4 \hat{u}_0(\xi)+|\xi|^2\hat{\theta}_0(\xi),&\xi\in\mb{R}^n,
	\end{cases}
\end{align}
where the partial Fourier transform with respect to spatial variables $x$ was employed.  Its corresponding characteristic equation
\begin{align}\label{CC-01}
	\lambda^3+|\xi|^2\lambda^2+2|\xi|^4\lambda+|\xi|^6=0
\end{align}
has one real root and two complex (non-real) conjugate roots as follows:
\begin{align*}
\lambda_1&=-a_0|\xi|^2:=-\frac{1+\alpha_-}{3}|\xi|^2,\\
	\lambda_{2,3}&=-a_1|\xi|^2\mp ia_2|\xi|^2:=-\frac{2-\alpha_-}{6}|\xi|^2\mp i\frac{\sqrt{3}\alpha_+}{6}|\xi|^2,
\end{align*}
where two positive numbers $\alpha_{\pm}$ are defined in \eqref{alpha+-}. For the readers' convenience, we state $a_0\approx 0.57$, $a_1\approx 0.22$ and $a_2\approx 1.31$. It is remarkable that $\lambda_1<0$ and $\Re \lambda_{2,3}<0$ for any $\xi\in\mb{R}^n\backslash\{0\}$. Hence, for the sake of pairwise distinct characteristic roots, the solution to \eqref{Eq-third-Fourier} is uniquely given by
\begin{align}\label{Rep-u}
\hat{u}(t,\xi)&=\widehat{K}_0(t,|\xi|)\hat{u}_0(\xi)+\widehat{K}_1(t,|\xi|)\hat{u}_1(\xi)+\widehat{K}_2(t,|\xi|)\left(-|\xi|^4\hat{u}_0(\xi)+|\xi|^2\hat{\theta}_0(\xi)\right)\notag\\
&=\left(\widehat{K}_0(t,|\xi|)-|\xi|^4\widehat{K}_2(t,|\xi|)\right)\hat{u}_0(\xi)+\widehat{K}_1(t,|\xi|)\hat{u}_1(\xi)+|\xi|^2\widehat{K}_2(t,|\xi|)\hat{\theta}_0(\xi),
\end{align} 
in which the kernels in the Fourier space have the representations
\begin{align*}
	\widehat{K}_0(t,|\xi|)&:=\sum\limits_{j=1,2,3}\frac{\exp(\lambda_jt)\prod_{k=1,2,3,\ k\neq j}\lambda_k}{\prod_{k=1,2,3,\ k\neq j}(\lambda_j-\lambda_k)},\\
	\widehat{K}_1(t,|\xi|)&:=-\sum\limits_{j=1,2,3}\frac{\exp(\lambda_jt)\sum_{k=1,2,3,\ k\neq j}\lambda_k}{\prod_{k=1,2,3,\ k\neq j}(\lambda_j-\lambda_k)},\\
	\widehat{K}_2(t,|\xi|)&:=\sum\limits_{j=1,2,3}\frac{\exp(\lambda_jt)}{\prod_{k=1,2,3,\ k\neq j}(\lambda_j-\lambda_k)}.
\end{align*}
From the Fourier transform of \eqref{Eq-2.1}, the solution $\hat{\theta}=\hat{\theta}(t,\xi)$ is represented by
\begin{align*}
	\hat{\theta}(t,\xi)&=(|\xi|^{-2}\partial_t^2+|\xi|^2)\left(\widehat{K}_0(t,|\xi|)-|\xi|^4\widehat{K}_2(t,|\xi|)\right)\hat{u}_0(\xi)\\
	&\quad+ (|\xi|^{-2}\partial_t^2+|\xi|^2)\widehat{K}_1(t,|\xi|)\hat{u}_1(\xi)+(\partial_t^2+|\xi|^4)\widehat{K}_2(t,|\xi|)\hat{\theta}_0(\xi).
\end{align*}

By lengthy but straightforward computations, one may observe
\begin{align*}
\widehat{K}_0(t,|\xi|)&=\frac{a_1^2+a_2^2}{(a_0-a_1)^2+a_2^2}\mathrm{e}^{-a_0|\xi|^2t}+\frac{a_0^2-2a_0a_1}{(a_0-a_1)^2+a_2^2}\cos(a_2|\xi|^2t)\mathrm{e}^{-a_1|\xi|^2t}\\
&\quad+\frac{a_0(a_0a_1-a_1^2+a_2^2)}{a_2[(a_0-a_1)^2+a_2^2]}\sin(a_2|\xi|^2t)\mathrm{e}^{-a_1|\xi|^2t},
\end{align*}
and with the aid of $1-\cos y=2\sin^2(y/2)$,
\begin{align}\label{K-1}
\widehat{K}_1(t,|\xi|)&=\frac{2a_1}{(a_0-a_1)^2+a_2^2}\frac{1}{|\xi|^2}\left( \mathrm{e}^{-a_0|\xi|^2t}-\cos(a_2|\xi|^2t)\mathrm{e}^{-a_1|\xi|^2t}\right)+\frac{a_0^2+a_2^2-a_1^2}{(a_0-a_1)^2+a_2^2}\frac{\sin(a_2|\xi|^2t)}{a_2|\xi|^2}\mathrm{e}^{-a_1|\xi|^2t}\notag\\
&=\frac{2a_1}{(a_0-a_1)^2+a_2^2}\frac{1}{|\xi|^2}\Big(\left(1-\cos(a_2|\xi|^2t)\right)\mathrm{e}^{-a_1|\xi|^2t}+\mathrm{e}^{-a_0|\xi|^2t}\left(1-\mathrm{e}^{(a_0-a_1)|\xi|^2t}\right)\Big)\notag\\
&\quad+\frac{a_0^2+a_2^2-a_1^2}{(a_0-a_1)^2+a_2^2}\frac{\sin(a_2|\xi|^2t)}{a_2|\xi|^2}\mathrm{e}^{-a_1|\xi|^2t}\notag\\
&=\frac{4a_1}{(a_0-a_1)^2+a_2^2}\frac{\left|\sin\left(\frac{a_2}{2}|\xi|^2t\right)\right|^2}{|\xi|^2}\mathrm{e}^{-a_1|\xi|^2t}+\frac{2a_1(a_1-a_0)}{(a_0-a_1)^2+a_2^2}t\mathrm{e}^{-a_0|\xi|^2t}\int_0^1\mathrm{e}^{(a_0-a_1)|\xi|^2t\tau}\mathrm{d}\tau\notag\\
&\quad+\frac{a_0^2+a_2^2-a_1^2}{(a_0-a_1)^2+a_2^2}\frac{\sin(a_2|\xi|^2t)}{a_2|\xi|^2}\mathrm{e}^{-a_1|\xi|^2t},
\end{align}
moreover, by the similar approach to the last chain, we claim
\begin{align*}
\widehat{K}_2(t,|\xi|)&=\frac{1}{(a_0-a_1)^2+a_2^2}\frac{1}{|\xi|^4}\left(\mathrm{e}^{-a_0|\xi|^2t}-\cos(a_2|\xi|^2t)\mathrm{e}^{-a_1|\xi|^2t}\right)+\frac{a_0-a_1}{(a_0-a_1)^2+a_2^2}\frac{\sin(a_2|\xi|^2t)}{a_2|\xi|^4}\mathrm{e}^{-a_1|\xi|^2t}\\
&=\frac{2}{(a_0-a_1)^2+a_2^2}\frac{\left|\sin\left(\frac{a_2}{2}|\xi|^2t\right)\right|^2}{|\xi|^4}\mathrm{e}^{-a_1|\xi|^2t}+\frac{a_1-a_0}{(a_0-a_1)^2+a_2^2}\frac{t}{|\xi|^2}\mathrm{e}^{-a_0|\xi|^2t}\int_0^1\mathrm{e}^{(a_0-a_1)|\xi|^2t\tau}\mathrm{d}\tau\\
&\quad+\frac{a_0-a_1}{(a_0-a_1)^2+a_2^2}\frac{\sin(a_2|\xi|^2t)}{a_2|\xi|^4}\mathrm{e}^{-a_1|\xi|^2t}.
\end{align*}
\begin{remark}\label{Rem-3.3}
We explain the reason for the previous technical treatment. Let us take $\widehat{K}_1(t,|\xi|)$ in \eqref{K-1} as an example. In the direct expression of this kernel, the first crucial term is
\begin{align*}
	I_{\mathrm{Cru}}(t,|\xi|):=\frac{1}{|\xi|^2}\left(\mathrm{e}^{-a_0|\xi|^2t}-\cos(a_2|\xi|^2t)\mathrm{e}^{-a_1|\xi|^2t}\right)
\end{align*}
with a strong singularity $|\xi|^{-2}$ for lower dimensions as $|\xi|\to0$. In other words, by a standard approach of the triangle inequality and bounded cosine function, we only can have
\begin{align*}
\|\chi_{\intt}(\xi)I_{\mathrm{Cru}}(t,|\xi|)\|_{L^2}^2&\leqslant\left\|\chi_{\intt}(\xi)\frac{1}{|\xi|^{2}}\mathrm{e}^{-a_0|\xi|^2t}\right\|_{L^2}^2+\left\|\chi_{\intt}(\xi)\frac{|\cos(a_2|\xi|^2t)|}{|\xi|^{2}}\mathrm{e}^{-a_1|\xi|^2t}\right\|_{L^2}^2\\
&\lesssim\int_0^{\varepsilon_0}r^{n-5}\mathrm{e}^{-cr^2t}\mathrm{d}r.
\end{align*}
Therefore, this rough pretreatment may lead to a strong singularity as $r\downarrow 0$ concerning lower dimensions $n\leqslant 4$. Instead, by our strategy in \eqref{K-1}, we obtain
\begin{align*}
\chi_{\intt}(\xi)\frac{\left|\sin\left(\frac{a_2}{2}|\xi|^2t\right)\right|^2}{|\xi|^2}\mathrm{e}^{-a_1|\xi|^2t}.
\end{align*}
It provides a way for us to compensate the singularity $|\xi|^{-2}$ as $|\xi|\to 0$ by  recovering the oscillating component, whose justification
will be shown in the next subsection. This is exactly one of advantages for using our reduction methodology. Our approach also can be used even if we do not get the explicit representations. The alternative idea is to employ asymptotic expansions (see, for example, \cite{Chen-Ikehata=2021,Chen-Ikehata-Palmieri=2023} in third-order (in time) PDEs of acoustic waves).
\end{remark}
Summarizing the last discussions and using
\begin{align*}
\mathrm{e}^{-a_0|\xi|^2t}\int_0^1\mathrm{e}^{(a_0-a_1)|\xi|^2t\tau}\mathrm{d}\tau\leqslant \mathrm{e}^{-a_1|\xi|^2t}
\end{align*}
since $a_0-a_1=\alpha_-/2>0$, we are able to conclude some pointwise estimates.
\begin{prop}\label{Prop-pointwise}
The following pointwise estimates for the solutions hold in the Fourier space:
\begin{align*}
\chi_{\intt}(\xi)|\hat{u}(t,\xi)|&\lesssim\chi_{\intt}(\xi)\mathrm{e}^{-c|\xi|^2t}|\hat{u}_0(\xi)|+\chi_{\intt}(\xi)\left(t+\frac{|\sin(a_2|\xi|^2t)|}{|\xi|^2}\right)\mathrm{e}^{-c|\xi|^2t}\left(|\hat{u}_1(\xi)|+|\hat{\theta}_0(\xi)|\right),\\
\big(1-\chi_{\intt}(\xi)\big)|\hat{u}(t,\xi)|&\lesssim \big(1-\chi_{\intt}(\xi)\big)\mathrm{e}^{-ct}\left(|\hat{u}_0(\xi)|+\frac{1}{|\xi|^2}\left(|\hat{u}_1(\xi)|+|\hat{\theta}_0(\xi)|\right)\right),
\end{align*}
and
\begin{align*}
	\chi_{\intt}(\xi)|\hat{\theta}(t,\xi)|&\lesssim\chi_{\intt}(\xi)|\xi|^2\mathrm{e}^{-c|\xi|^2t}|\hat{u}_0(\xi)|+\chi_{\intt}(\xi)\mathrm{e}^{-c|\xi|^2t}\left(|\hat{u}_1(\xi)|+|\hat{\theta}_0(\xi)|\right),\\
	\big(1-\chi_{\intt}(\xi)\big)|\hat{\theta}(t,\xi)|&\lesssim \big(1-\chi_{\intt}(\xi)\big)\mathrm{e}^{-ct}\left(|\xi|^2|\hat{u}_0(\xi)|+|\hat{u}_1(\xi)|+|\hat{\theta}_0(\xi)|\right),
\end{align*}
with a suitable constant $c>0$.
\end{prop}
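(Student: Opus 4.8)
The plan is to obtain every bound by inspection of the closed‑form kernels $\widehat{K}_0,\widehat{K}_1,\widehat{K}_2$ and their reduced representations (in particular \eqref{K-1} and the analogous line for $\widehat{K}_2$) established just before the statement, combined with the representation \eqref{Rep-u} of $\hat{u}$ and the corresponding one for $\hat{\theta}$. Observe first that the substitution $\lambda=|\xi|^2\mu$ turns \eqref{CC-01} into $\mu^3+\mu^2+2\mu+1=0$, whose roots are the fixed numbers $\mu\in\{-a_0,\,-a_1\mp ia_2\}$; hence $\lambda_j=|\xi|^2\mu_j$ for \emph{every} $\xi\neq0$, the three roots stay pairwise distinct, and the explicit formulas for the kernels are valid on all of $\mb{R}^n\setminus\{0\}$. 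The splitting into $\ml{Z}_{\intt}(\varepsilon_0)$ and $\{|\xi|\geqslant\varepsilon_0/2\}$ then serves only to organise the estimates: on the former all decay is Gaussian, $\mathrm{e}^{-c|\xi|^2t}$, and the singular factors $|\xi|^{-2},|\xi|^{-4}$ must be handled with care; on the latter $|\xi|$ is bounded away from the origin, so $|\xi|^{-2m}\lesssim1$ and $\mathrm{e}^{-c|\xi|^2t}\leqslant\mathrm{e}^{-c(\varepsilon_0/2)^2t}$, which upgrades polynomial‑in‑$t$ decay to the stated $\mathrm{e}^{-ct}$.

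The interior estimates rest on three elementary facts: (i) $\sup_{s\geqslant0}s^k\mathrm{e}^{-cs}<\infty$ for $k\geqslant0$, $c>0$, used to absorb prefactors $|\xi|^{2k}t^k$ into a slightly weaker Gaussian (e.g. $|\xi|^2t\,\mathrm{e}^{-a_1|\xi|^2t}\lesssim\mathrm{e}^{-c|\xi|^2t}$); (ii) $|\sin y|\leqslant\min\{1,|y|\}$, which gives the crucial compensation $|\xi|^{-2}\big|\sin(\tfrac{a_2}{2}|\xi|^2t)\big|^2\leqslant\tfrac{a_2}{2}t$ of the strong singularity by the recovered oscillation; and (iii) $\mathrm{e}^{-a_0|\xi|^2t}\int_0^1\mathrm{e}^{(a_0-a_1)|\xi|^2t\tau}\mathrm{d}\tau\leqslant\mathrm{e}^{-a_1|\xi|^2t}$, valid since $a_0-a_1=\alpha_-/2>0$. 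Feeding (i)--(iii) into the reduced forms yields, on $\ml{Z}_{\intt}(\varepsilon_0)$,
\begin{align*}
\big|\widehat{K}_0(t,|\xi|)-|\xi|^4\widehat{K}_2(t,|\xi|)\big|&\lesssim\mathrm{e}^{-c|\xi|^2t},\\
\big|\widehat{K}_1(t,|\xi|)\big|+\big||\xi|^2\widehat{K}_2(t,|\xi|)\big|&\lesssim\Big(t+\frac{|\sin(a_2|\xi|^2t)|}{|\xi|^2}\Big)\mathrm{e}^{-c|\xi|^2t},
\end{align*}
where the $|\sin|^2/|\xi|^2$ piece and the explicit $t\,\mathrm{e}^{-a_0|\xi|^2t}$ piece feed the summand $t$, while the genuinely oscillatory term $a_2^{-1}|\xi|^{-2}\sin(a_2|\xi|^2t)\mathrm{e}^{-a_1|\xi|^2t}$ is kept and feeds $|\xi|^{-2}|\sin(a_2|\xi|^2t)|$. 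Inserting these into \eqref{Rep-u} gives the first interior estimate. On $\{|\xi|\geqslant\varepsilon_0/2\}$ the same formulas show that these three coefficients are dominated by $\mathrm{e}^{-ct}$, $|\xi|^{-2}\mathrm{e}^{-ct}$, $|\xi|^{-2}\mathrm{e}^{-ct}$ respectively (using in addition $t\,\mathrm{e}^{-a_0|\xi|^2t}\lesssim|\xi|^{-2}\mathrm{e}^{-c|\xi|^2t}$), which is the second estimate.

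For the temperature I would start from $\hat{\theta}=(|\xi|^{-2}\partial_t^2+|\xi|^2)(\widehat{K}_0-|\xi|^4\widehat{K}_2)\hat{u}_0+(|\xi|^{-2}\partial_t^2+|\xi|^2)\widehat{K}_1\hat{u}_1+(\partial_t^2+|\xi|^4)\widehat{K}_2\hat{\theta}_0$ and exploit that every kernel is a finite linear combination of terms $|\xi|^{-2m}q(|\xi|^2t)\mathrm{e}^{-a_j|\xi|^2t}$ with $q$ bounded (or $q(s)=s$). Since $\partial_t$ acts on $\mathrm{e}^{\lambda_jt}$ as the multiplier $\lambda_j$ of size $\simeq|\xi|^2$, the operator $\partial_t^2$ multiplies any such term by a factor $\lesssim|\xi|^4$ while preserving the Gaussian decay, so $|\xi|^{-2}\partial_t^2$ lowers the net negative power of $|\xi|$ by exactly $|\xi|^{2}$. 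Carrying this out term by term — most conveniently on the difference‑of‑exponentials forms on the first lines of the computations of $\widehat{K}_1$ and $\widehat{K}_2$, where $\partial_t^2$ is immediate — one gets on $\ml{Z}_{\intt}(\varepsilon_0)$ the bounds $(|\xi|^{-2}\partial_t^2+|\xi|^2)(\widehat{K}_0-|\xi|^4\widehat{K}_2)\lesssim|\xi|^2\mathrm{e}^{-c|\xi|^2t}$, $(|\xi|^{-2}\partial_t^2+|\xi|^2)\widehat{K}_1\lesssim\mathrm{e}^{-c|\xi|^2t}$ and $(\partial_t^2+|\xi|^4)\widehat{K}_2\lesssim\mathrm{e}^{-c|\xi|^2t}$ — note that here the singular factors are completely annihilated and no growth in $t$ survives; on $\{|\xi|\geqslant\varepsilon_0/2\}$ these become $|\xi|^2\mathrm{e}^{-ct}$, $\mathrm{e}^{-ct}$, $\mathrm{e}^{-ct}$. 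Substituting into the representation of $\hat{\theta}$ gives the last two estimates.

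The main obstacle is precisely the interior‑zone control of the singular kernels: showing that the apparent singularities $|\xi|^{-2}$ in $\widehat{K}_1$ and in $|\xi|^2\widehat{K}_2$ (for $\hat{u}$), and $|\xi|^{-4}$ in $\widehat{K}_2$ (for $\hat{\theta}$), are entirely compensated — in the first case by the recovered oscillation $|\sin(\tfrac{a_2}{2}|\xi|^2t)|^2$, in the second by the two extra time derivatives. This is exactly the mechanism highlighted in Remark \ref{Rem-3.3}, and it is why the reduced representations \eqref{K-1} and its counterpart for $\widehat{K}_2$ were set up beforehand; once they are in hand, everything else is routine bookkeeping of bounded coefficients, bounded oscillatory factors and Gaussian exponentials.
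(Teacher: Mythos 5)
Your proposal is correct and follows essentially the same route as the paper: bound the six interior/exterior kernel-coefficients directly from the explicit and reduced representations (using $\sup_{s\geqslant0}s^k\mathrm{e}^{-cs}<\infty$, $|\sin y|\leqslant\min\{1,|y|\}$, and the integral inequality $\mathrm{e}^{-a_0|\xi|^2t}\int_0^1\mathrm{e}^{(a_0-a_1)|\xi|^2t\tau}\mathrm{d}\tau\leqslant\mathrm{e}^{-a_1|\xi|^2t}$), then plug into \eqref{Rep-u} and the corresponding formula for $\hat{\theta}$. The only (cosmetic) difference is in the temperature part, where the paper records the bound $|\partial_t^2\widehat{K}_j|\lesssim|\xi|^{4-2j}\mathrm{e}^{-c|\xi|^2t}$ and then uses the triangle inequality $|\hat{\theta}|\leqslant|\xi|^{-2}|\hat{u}_{tt}|+|\xi|^2|\hat{u}|$, whereas you estimate the combined symbols $(|\xi|^{-2}\partial_t^2+|\xi|^2)\widehat{K}_j$ directly; these are arithmetically the same.
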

\begin{proof}
From  the explicit formulas of kernels, one derives
\begin{align*}
	\chi_{\intt}(\xi)\left|\widehat{K}_0(t,|\xi|)-|\xi|^4\widehat{K}_2(t,|\xi|)\right|&\lesssim \chi_{\intt}(\xi)\mathrm{e}^{-c|\xi|^2t},\\
	\chi_{\intt}(\xi)|\widehat{K}_1(t,|\xi|)|+\chi_{\intt}(\xi)|\xi|^2|\widehat{K}_2(t,|\xi|)|&\lesssim \chi_{\intt}(\xi)\left(t+\frac{|\sin(a_2|\xi|^2t)|}{|\xi|^2}\right)\mathrm{e}^{-c|\xi|^2t},
\end{align*}
as well as
\begin{align*}
	\big(1-\chi_{\intt}(\xi)\big)\left|\widehat{K}_0(t,|\xi|)-|\xi|^4\widehat{K}_2(t,|\xi|)\right|&\lesssim \big(1-\chi_{\intt}(\xi)\big)\mathrm{e}^{-ct},\\
	\big(1-\chi_{\intt}(\xi)\big)|\widehat{K}_1(t,|\xi|)|+\big(1-\chi_{\intt}(\xi)\big)|\xi|^2|\widehat{K}_2(t,|\xi|)|&\lesssim \big(1-\chi_{\intt}(\xi)\big)|\xi|^{-2}\mathrm{e}^{-ct},
\end{align*}
with a suitable constant $c>0$. Then, we may arrive at the desired estimates of $\hat{u}(t,\xi)$ by plugging into the representation \eqref{Rep-u}.  For another, due to the fact that
\begin{align*}
|\partial_t^2\widehat{K}_j(t,|\xi|)|\lesssim |\xi|^{4-2j}\mathrm{e}^{-c|\xi|^2t}
\end{align*}
for $j=0,1,2$, we can employ 
\begin{align*}
	|\hat{\theta}(t,\xi)|\leqslant \frac{1}{|\xi|^2}|\hat{u}_{tt}(t,\xi)|+|\xi|^2|\hat{u}(t,\xi)|
\end{align*}
to derive the aim estimates of $\hat{\theta}(t,\xi)$. The proof is complete eventually.
\end{proof}

Next, let us introduce two multipliers in the Fourier space (they are the Fourier transforms of \eqref{M0} and \eqref{M1}, respectively) as follows:
\begin{align*}
\widehat{J}_0(t,|\xi|)&:=\frac{1}{(a_0-a_1)^2+a_2^2}\frac{1}{|\xi|^2}\left(\mathrm{e}^{-a_0|\xi|^2t}-\cos(a_2|\xi|^2t)\mathrm{e}^{-a_1|\xi|^2t}\right),\\
\widehat{J}_1(t,|\xi|)&:=\frac{1}{(a_0-a_1)^2+a_2^2}\frac{\sin(a_2|\xi|^2t)}{a_2|\xi|^2}\mathrm{e}^{-a_1|\xi|^2t},
\end{align*} 
and those generalized from the operator $|D|^{-2}\partial_t^2+|D|^2$ as follows:
\begin{align}
	\widehat{J}_{2+j}(t,|\xi|):=(|\xi|^{-2}\partial_t^2+|\xi|^2)\widehat{J}_j(t,|\xi|),
\end{align}
with $j=0,1$. Then, we can arrive at the next result easily.
\begin{prop}\label{Prop-refine}
The following refined estimates for the solutions hold in the Fourier space:
\begin{align*}
\chi_{\intt}(\xi)\left|\hat{u}(t,\xi)-\widehat{J}_0(t,|\xi|)\widehat{\Psi}_0(\xi)-\widehat{J}_1(t,|\xi|)\widehat{\Psi}_1(\xi)\right|&\lesssim\chi_{\intt}(\xi)\mathrm{e}^{-c|\xi|^2t}|\hat{u}_0(\xi)|,\\
\chi_{\intt}(\xi)\left|\hat{\theta}(t,\xi)-\widehat{J}_2(t,|\xi|)\widehat{\Psi}_0(\xi)-\widehat{J}_3(t,|\xi|)\widehat{\Psi}_1(\xi)\right|&\lesssim\chi_{\intt}(\xi)|\xi|^2\mathrm{e}^{-c|\xi|^2t}|\hat{u}_0(\xi)|,
\end{align*}
with a suitable constant $c>0$, where the combined datum in the Fourier space are defined by
\begin{align*}
	\widehat{\Psi}_0(\xi):=2a_1\hat{u}_1(\xi)+\hat{\theta}_0(\xi)\ \ \mbox{and}\ \ \widehat{\Psi}_1(\xi):=(a_0^2+a_2^2-a_1^2)\hat{u}_1(\xi)+(a_0-a_1)\hat{\theta}_0(\xi).
\end{align*}
\end{prop}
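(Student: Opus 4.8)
\textbf{Proof proposal for Proposition \ref{Prop-refine}.}

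The plan is to compare the exact kernels $\widehat{K}_0-|\xi|^4\widehat{K}_2$, $\widehat{K}_1$, $|\xi|^2\widehat{K}_2$ acting on the datum $\hat{u}_0,\hat{u}_1,\hat{\theta}_0$ with the proposed profiles $\widehat{J}_0\widehat{\Psi}_0+\widehat{J}_1\widehat{\Psi}_1$ and to show that the difference is controlled by $\chi_{\intt}(\xi)\mathrm{e}^{-c|\xi|^2t}|\hat{u}_0(\xi)|$ alone. First I would regroup the representation \eqref{Rep-u} by collecting the coefficients of $\hat{u}_1$ and of $\hat{\theta}_0$ separately. From the explicit formula for $\widehat{K}_1$ in \eqref{K-1} and the one displayed for $\widehat{K}_2$, the coefficient of $\hat{u}_1$ is $\widehat{K}_1$ and the coefficient of $\hat{\theta}_0$ is $|\xi|^2\widehat{K}_2$; writing both in the ``$\mathrm{e}^{-a_0|\xi|^2t}-\cos(a_2|\xi|^2t)\mathrm{e}^{-a_1|\xi|^2t}$ plus $\sin(a_2|\xi|^2t)\mathrm{e}^{-a_1|\xi|^2t}$'' form (i.e. before the $1-\cos=2\sin^2$ rewriting) one reads off precisely
\[
\widehat{K}_1=2a_1\widehat{J}_0+(a_0^2+a_2^2-a_1^2)\widehat{J}_1,\qquad
|\xi|^2\widehat{K}_2=\widehat{J}_0+(a_0-a_1)\widehat{J}_1.
\]
Hence $\widehat{K}_1\hat{u}_1+|\xi|^2\widehat{K}_2\hat{\theta}_0=\widehat{J}_0\widehat{\Psi}_0+\widehat{J}_1\widehat{\Psi}_1$ \emph{exactly}, with no error term; the only genuine remainder is $(\widehat{K}_0-|\xi|^4\widehat{K}_2)\hat{u}_0$, which by the first bound established in the proof of Proposition \ref{Prop-pointwise} satisfies $\chi_{\intt}(\xi)|\widehat{K}_0-|\xi|^4\widehat{K}_2|\lesssim\chi_{\intt}(\xi)\mathrm{e}^{-c|\xi|^2t}$. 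This gives the first inequality. For the second, I would apply the operator $|\xi|^{-2}\partial_t^2+|\xi|^2$ to the identity just obtained: by definition $\widehat{J}_{2+j}=(|\xi|^{-2}\partial_t^2+|\xi|^2)\widehat{J}_j$, so $(|\xi|^{-2}\partial_t^2+|\xi|^2)(\widehat{K}_1\hat{u}_1+|\xi|^2\widehat{K}_2\hat{\theta}_0)=\widehat{J}_2\widehat{\Psi}_0+\widehat{J}_3\widehat{\Psi}_1$ exactly, while the $\hat{u}_0$-contribution to $\hat{\theta}$ is $(|\xi|^{-2}\partial_t^2+|\xi|^2)(\widehat{K}_0-|\xi|^4\widehat{K}_2)\hat{u}_0$; using $|\partial_t^2\widehat{K}_j|\lesssim|\xi|^{4-2j}\mathrm{e}^{-c|\xi|^2t}$ (cited in the proof of Proposition \ref{Prop-pointwise}) together with the pointwise bound on $\widehat{K}_0-|\xi|^4\widehat{K}_2$ and its second $t$-derivative yields the factor $\chi_{\intt}(\xi)|\xi|^2\mathrm{e}^{-c|\xi|^2t}$.

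The only point requiring care — and the main obstacle — is the algebraic verification that the coefficients genuinely match, i.e. that $2a_1,\ a_0^2+a_2^2-a_1^2,\ 1,\ a_0-a_1$ are exactly the numerators appearing in $\widehat{K}_1$ and $|\xi|^2\widehat{K}_2$ when those kernels are expanded from the partial-fraction definitions in terms of $\lambda_1=-a_0|\xi|^2$, $\lambda_{2,3}=(-a_1\mp ia_2)|\xi|^2$. Concretely one must compute $\sum_{k\neq j}\lambda_k$ and $\prod_{k\neq j}(\lambda_j-\lambda_k)$ for $j=1,2,3$, separate real and imaginary parts of the two conjugate contributions, and check that the resulting combinations of $a_0,a_1,a_2$ collapse to the stated constants after using the relations among the roots dictated by \eqref{CC-01} (namely $\lambda_1+\lambda_2+\lambda_3=-|\xi|^2$, $\lambda_1\lambda_2+\lambda_2\lambda_3+\lambda_3\lambda_1=2|\xi|^4$, $\lambda_1\lambda_2\lambda_3=-|\xi|^6$, equivalently $2a_1+a_0=1$, $a_1^2+a_2^2+2a_0a_1=2$, $a_0(a_1^2+a_2^2)=1$). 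This is the ``lengthy but straightforward'' bookkeeping already alluded to before Remark \ref{Rem-3.3}; once it is in place the Proposition follows with essentially no analysis, only the pointwise kernel bounds recalled above.
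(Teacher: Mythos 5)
Your proposal is correct and is exactly the argument the paper has in mind: the paper already displays the expanded formulas for $\widehat{K}_1$ and $\widehat{K}_2$, from which the identities $\widehat{K}_1=2a_1\widehat{J}_0+(a_0^2+a_2^2-a_1^2)\widehat{J}_1$ and $|\xi|^2\widehat{K}_2=\widehat{J}_0+(a_0-a_1)\widehat{J}_1$ (and hence $\widehat{K}_1\hat{u}_1+|\xi|^2\widehat{K}_2\hat{\theta}_0=\widehat{J}_0\widehat{\Psi}_0+\widehat{J}_1\widehat{\Psi}_1$) are read off directly, so the "algebraic obstacle" you flag is already resolved in the text and the only remainder is the $\hat{u}_0$-contribution, controlled by the pointwise bounds on $\widehat{K}_0-|\xi|^4\widehat{K}_2$ and its second $t$-derivative exactly as you say.
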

\begin{remark}\label{Rem-Strucutre-J3,4}
We realize the leading terms of $\hat{\theta}(t,\xi)$ being constituted by the linear combination of
\begin{align*}
\mathrm{e}^{-a_0|\xi|^2t},\ \ \cos(a_2|\xi|^2t)\mathrm{e}^{-a_1|\xi|^2t},\ \ \sin(a_2|\xi|^2t)\mathrm{e}^{-a_1|\xi|^2t}.
\end{align*}
Moreover, the leading terms of $\hat{\theta}(t,\xi)$ are the pseudo-differential operator with the symbol $|\xi|^{-2}\partial_t^2+|\xi|^2$ acting on those for $\hat{u}(t,\xi)$, which is strongly motivated by the first equation of \eqref{Eq-Linear-TEP}.
\end{remark}

\subsection{Optimal estimates and asymptotic profiles: Proof of Theorem \ref{Thm-Linear}}
In the last part, we found the crucial part is the combination of Fourier multipliers $\widehat{J}_0(t,|\xi|)$ and $\widehat{J}_1(t,|\xi|)$ to describe asymptotic profiles of the vertical displacement in the phase space. Our first proposition is to analyze it finely in the sense of optimal estimates as large-time.
\begin{prop}\label{Prop-J0,J1} 
	The following optimal estimates for the profile hold:
	\begin{align*}
t^{1-\frac{n}{4}}|P_{u_1}|\lesssim\left\|\chi_{\intt}(D)\big(J_0(t,|D|)P_{\Psi_0}+J_1(t,|D|)P_{\Psi_1}\big)\right\|_{L^2}\lesssim t^{1-\frac{n}{4}}\sqrt{|P_{\Psi_0}|^2+|P_{\Psi_1}|^2}
	\end{align*}
	as $t\gg1$ for any $n\geqslant 1$.
\end{prop}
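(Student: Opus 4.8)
The plan is to evaluate $\|\chi_{\intt}(D)(J_0(t,\cdot)P_{\Psi_0}+J_1(t,\cdot)P_{\Psi_1})\|_{L^2}$ up to multiplicative constants by transferring to the Fourier side and rescaling. First I would write, with $c_*:=((a_0-a_1)^2+a_2^2)^{-1}$, $g_0(s):=\mathrm{e}^{-a_0s}-\cos(a_2s)\mathrm{e}^{-a_1s}$ and $g_1(s):=a_2^{-1}\sin(a_2s)\mathrm{e}^{-a_1s}$, the identity $\widehat{J}_j(t,|\xi|)=c_*|\xi|^{-2}g_j(|\xi|^2t)$ for $j=0,1$. A Taylor expansion at $s=0$ gives $g_0(s)=(a_1-a_0)s+O(s^2)$ and $g_1(s)=s+O(s^2)$, while $|g_j(s)|\lesssim\mathrm{e}^{-a_1s}$ for all $s\geqslant 0$ (here $a_1>0$); in particular $g_j(s)=O(s)$ near the origin, so that $\xi\mapsto\widehat{J}_j(t,|\xi|)$ carries no singularity at $\xi=0$ — this is precisely the cancellation of the $|\xi|^{-2}$ weight highlighted in Remark \ref{Rem-3.3}. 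Then, by Plancherel's theorem, polar coordinates and the change of variables $\rho=|\xi|\sqrt{t}$, I would reduce the square of the norm to
\begin{align*}
\big\|\chi_{\intt}(D)(J_0(t,\cdot)P_{\Psi_0}+J_1(t,\cdot)P_{\Psi_1})\big\|_{L^2}^2=c_nt^{2-\frac{n}{2}}\int_0^{\infty}\chi_{\intt}(\rho t^{-1/2})^2\,\rho^{n-5}\big|P_{\Psi_0}g_0(\rho^2)+P_{\Psi_1}g_1(\rho^2)\big|^2\,\mathrm{d}\rho,
\end{align*}
where $c_n>0$ depends only on $n$.

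Next I would analyse the rescaled integral. Since $\rho^{n-5}|g_j(\rho^2)|^2\lesssim\rho^{n-1}$ near $\rho=0$ (because $g_j(\rho^2)=O(\rho^2)$) and decays exponentially as $\rho\to\infty$, the integrand is dominated, uniformly in $t\geqslant 1$, by the fixed $L^1(0,\infty)$ function $\rho^{n-5}|P_{\Psi_0}g_0(\rho^2)+P_{\Psi_1}g_1(\rho^2)|^2$; since moreover $\chi_{\intt}(\rho t^{-1/2})\to\chi_{\intt}(0)=1$ pointwise, dominated convergence yields
\begin{align*}
\lim_{t\to\infty}\int_0^{\infty}\chi_{\intt}(\rho t^{-1/2})^2\,\rho^{n-5}\big|P_{\Psi_0}g_0(\rho^2)+P_{\Psi_1}g_1(\rho^2)\big|^2\,\mathrm{d}\rho=Q_n(P_{\Psi_0},P_{\Psi_1}):=\int_0^{\infty}\rho^{n-5}\big|P_{\Psi_0}g_0(\rho^2)+P_{\Psi_1}g_1(\rho^2)\big|^2\,\mathrm{d}\rho .
\end{align*}
The upper bound is now immediate: using $\chi_{\intt}\leqslant 1$ in the displayed identity gives $\|\cdots\|_{L^2}^2\leqslant c_nt^{2-\frac{n}{2}}Q_n(P_{\Psi_0},P_{\Psi_1})\lesssim t^{2-\frac{n}{2}}(|P_{\Psi_0}|^2+|P_{\Psi_1}|^2)$, which is the claimed bound by $t^{1-\frac{n}{4}}\sqrt{|P_{\Psi_0}|^2+|P_{\Psi_1}|^2}$.

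For the lower bound I would first show $Q_n$ is positive definite and then transfer this to $|P_{u_1}|$. The point is that $g_0(\rho^2)$ and $g_1(\rho^2)$ are linearly independent in $L^2((0,\infty),\rho^{n-5}\mathrm{d}\rho)$: if $y_0g_0(s)+y_1g_1(s)\equiv 0$, then comparing coefficients of the linearly independent modes $\mathrm{e}^{-a_0s}$, $\cos(a_2s)\mathrm{e}^{-a_1s}$, $\sin(a_2s)\mathrm{e}^{-a_1s}$ (recall $a_0\neq a_1$ and $a_2>0$) forces $y_0=y_1=0$. Hence the $2\times 2$ Gram matrix of $(g_0,g_1)$ is positive definite, so $Q_n(y_0,y_1)\geqslant\lambda_n(y_0^2+y_1^2)$ with $\lambda_n>0$; moreover, by the same domination, the matrix of $\chi_{\intt}$-weighted inner products converges entrywise to this Gram matrix, hence stays at least $\tfrac12$ of it (in the Loewner order) for $t\gg 1$, uniformly in $(P_{\Psi_0},P_{\Psi_1})$. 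Feeding this into the displayed identity gives $\|\cdots\|_{L^2}^2\gtrsim t^{2-\frac{n}{2}}(|P_{\Psi_0}|^2+|P_{\Psi_1}|^2)$. Finally, the linear map $(P_{u_1},P_{\theta_0})\mapsto(P_{\Psi_0},P_{\Psi_1})$ has determinant $2a_1(a_0-a_1)-(a_0^2+a_2^2-a_1^2)=-\big((a_0-a_1)^2+a_2^2\big)\neq 0$, so $|P_{\Psi_0}|^2+|P_{\Psi_1}|^2\gtrsim|P_{u_1}|^2$, which closes the estimate. I expect this non-cancellation step to be the main obstacle: Remark \ref{Rem-3.3} is needed not only to make the profile $|\xi|$-bounded but, through the linear independence of the two modes $g_0,g_1$, to prevent $J_0P_{\Psi_0}+J_1P_{\Psi_1}$ from collapsing to something decaying faster than $t^{1-n/4}$; turning that qualitative fact into the quantitative $|P_{u_1}|$ lower bound — via positive definiteness of $Q_n$ together with invertibility of the coefficient matrix above — is the crux.
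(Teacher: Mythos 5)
Your proposal is correct, and for the lower bound it takes a genuinely different — and in fact sharper — route than the paper. Both arguments begin with the same reduction: write $\widehat{J}_j(t,|\xi|)=c_*|\xi|^{-2}g_j(|\xi|^2t)$, observe $g_j(s)=O(s)$ near $s=0$ (so the $|\xi|^{-2}$ weight is compensated), and rescale $\rho=|\xi|\sqrt t$ to pull out $t^{2-n/2}$. The paper then bounds the remaining integral from below by restricting to a small interval $0<w<\delta_0$ and invoking the pointwise limit $\lim_{w\to0}\bar I(w)=(a_1-a_0)P_{\Psi_0}+P_{\Psi_1}=\big((a_0-a_1)^2+a_2^2\big)P_{u_1}$, where the $P_{\theta_0}$ contribution happens to cancel; the lower bound therefore "sees" only $|P_{u_1}|$. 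Your argument is global rather than local: dominated convergence identifies the asymptotic constant as the quadratic form $Q_n(P_{\Psi_0},P_{\Psi_1})$, and positive definiteness of the Gram matrix of $g_0,g_1$ in $L^2((0,\infty),\rho^{n-5}\,\mathrm d\rho)$ — via linear independence of the modes $\mathrm e^{-a_0 s}$, $\cos(a_2s)\mathrm e^{-a_1 s}$, $\sin(a_2s)\mathrm e^{-a_1 s}$ — gives $\|\cdots\|_{L^2}\gtrsim t^{1-n/4}\sqrt{|P_{\Psi_0}|^2+|P_{\Psi_1}|^2}$, which together with the invertibility of $(P_{u_1},P_{\theta_0})\mapsto(P_{\Psi_0},P_{\Psi_1})$ (determinant $-\big((a_0-a_1)^2+a_2^2\big)\neq0$) yields $\gtrsim t^{1-n/4}\sqrt{|P_{u_1}|^2+|P_{\theta_0}|^2}$. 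This is stronger than the stated $\gtrsim t^{1-n/4}|P_{u_1}|$ and matches the upper bound exactly, so your method actually pins down the sharp asymptotic constant (modulo the Gram form $Q_n$). What the paper's local Taylor argument buys in exchange is elementarity — no functional-analytic input beyond continuity — and it makes transparent why only $P_{u_1}$ appears in the statement: that is precisely the value of the profile at $w=0$. Your observation about what Remark~\ref{Rem-3.3} is really doing — removing the $|\xi|^{-2}$ singularity and preventing cancellation of the two modes — is on point.
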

\begin{proof}
	Let us denote our aim by
	\begin{align*}
		I(t):=\left\|\chi_{\intt}(D)\big(J_0(t,|D|)P_{\Psi_0}+J_1(t,|D|)P_{\Psi_1}\big)\right\|_{L^2}^2.
	\end{align*}
	We recall $\sup\limits_{z\neq0}\left|z^{-1}\sin z\right|=:L_0>0$. By the same approach as the one in \eqref{K-1}, with some positive constants $\tilde{c}$, we notice
	\begin{align}\label{Eq-new-01}
		I(t)&\lesssim\left\|\chi_{\intt}(\xi)\frac{|\sin(\tilde{c}|\xi|^2t)|}{|\xi|^2}\mathrm{e}^{-c|\xi|^2t}\right\|_{L^2}^2|P_{\Psi_0}|^2+t^2\left\|\chi_{\intt}(\xi)\mathrm{e}^{-c|\xi|^2t}\right\|_{L^2}^2|P_{\Psi_1}|^2\notag\\
		&\lesssim \int_0^{\varepsilon_0}|\sin(\tilde{c}r^2t)|^2r^{n-5}\mathrm{e}^{-2cr^2t}\mathrm{d}r|P_{\Psi_0}|^2+t^2\int_0^{\varepsilon_0}r^{n-1}\mathrm{e}^{-2cr^2t}\mathrm{d}r|P_{\Psi_1}|^2\notag\\
		&\lesssim t^{2-\frac{n}{2}}\int_0^{\varepsilon_0}\left( \frac{|\sin(\tilde{c}r^2t)|^2}{|\tilde{c}r^2t|^2}+1\right)(r^2t)^{\frac{n-1}{2}}\mathrm{e}^{-2cr^2t}\mathrm{d}(r^2t)^{\frac{1}{2}}\left(|P_{\Psi_0}|^2+|P_{\Psi_1}|^2\right)\notag\\
		&\lesssim t^{2-\frac{n}{2}}\int_0^{\varepsilon_0\sqrt{t}}\left(\frac{|\sin(\tilde{c}w^2)|^2}{|\tilde{c}w^2|^2}+1\right)w^{n-1}\mathrm{e}^{-2cw^2}\mathrm{d}w\left(|P_{\Psi_0}|^2+|P_{\Psi_1}|^2\right)\notag\\
		&\lesssim t^{2-\frac{n}{2}}\int_0^{\infty}w^{n-1}\mathrm{e}^{-2cw^2}\mathrm{d}w\left(|P_{\Psi_0}|^2+|P_{\Psi_1}|^2\right)\notag\\
		&\lesssim t^{2-\frac{n}{2}}\left(|P_{\Psi_0}|^2+|P_{\Psi_1}|^2\right)
	\end{align}
	for any $n\geqslant 1$ and $t\gg1$, where we applied polar coordinates and the change of variable $w=r\sqrt{t}$ in the last chains.
	
	Let us turn to the lower bound estimates. Again from polar coordinates, we arrive at
	\begin{align*}
		I(t)&\gtrsim\int_0^{\varepsilon_0}\left(\frac{1}{r^2}\left(\mathrm{e}^{-a_0r^2t}-\cos(a_2r^2t)\mathrm{e}^{-a_1r^2t}\right)P_{\Psi_0}+\frac{\sin(a_2r^2t)}{a_2r^2}\mathrm{e}^{-a_1r^2t}P_{\Psi_1}\right)^2r^{n-1}\mathrm{d}r\\
		&\gtrsim t^{2-\frac{n}{2}}\int_0^{\varepsilon_0\sqrt{t}}\left(\frac{1}{w^2}\left(\mathrm{e}^{-a_0w^2}-\cos(a_2w^2)\mathrm{e}^{-a_1w^2}\right)P_{\Psi_0}+\frac{\sin(a_2w^2)}{a_2w^2}\mathrm{e}^{-a_1w^2}P_{\Psi_1}\right)^2w^{n-1}\mathrm{d}w\\
		&\gtrsim t^{2-\frac{n}{2}}\int_0^{\delta_0}|\bar{I}(w)|^2w^{n-1}\mathrm{d}w
	\end{align*}
for large-time $t\gg1$ such that $\delta_0\leqslant \varepsilon_0\sqrt{t}$, where $\delta_0>0$ will be chosen later and we defined
\begin{align*}
	\bar{I}(w):=\frac{1}{w^2}\left(\mathrm{e}^{-a_0w^2}-\cos(a_2w^2)\mathrm{e}^{-a_1w^2}\right)P_{\Psi_0}+\frac{\sin(a_2w^2)}{a_2w^2}\mathrm{e}^{-a_1w^2}P_{\Psi_1}.
\end{align*}
Due to the facts that
\begin{align*}
\lim\limits_{\varepsilon_1\downarrow 0}\frac{\mathrm{e}^{-a_0\varepsilon_1^2}-\cos(a_2\varepsilon_1^2)\mathrm{e}^{-a_1\varepsilon_1^2}}{\varepsilon_1^2}=a_1-a_0\ \ \mbox{and}\ \ 
\lim\limits_{\varepsilon_1\downarrow 0}\frac{\sin(a_2\varepsilon_1^2)}{a_2\varepsilon_1^2}\mathrm{e}^{-a_1\varepsilon_1^2}=1,
\end{align*}
we can get
\begin{align*}
	\lim\limits_{\varepsilon_1\downarrow 0}|\bar{I}(\varepsilon_1)|=\left|(a_1-a_0)P_{\Psi_0}+P_{\Psi_1}\right|=\left((a_0-a_1)^2+a_2^2\right)|P_{u_1}|.
\end{align*}
For this reason, there exists $\delta_0$ with $0<w<\delta_0\ll 1$ such that
\begin{align*}
	|\bar{I}(w)|\geqslant\frac{1}{2}\left((a_0-a_1)^2+a_2^2\right)|P_{u_1}|.
\end{align*}
It leads to
\begin{align*}
	I(t)\gtrsim t^{2-\frac{n}{2}}|P_{u_1}|^2\int_0^{\delta_0}w^{n-1}\mathrm{d}w\gtrsim t^{2-\frac{n}{2}}|P_{u_1}|^2
\end{align*}
for $t\gg1$. Our proof is complete.
\end{proof}
\begin{prop}\label{Prop-J2,J3}
	The following optimal estimates for the profile hold:
	\begin{align*}
		t^{-\frac{n}{4}}|P_{\theta_0}|\lesssim\left\|\chi_{\intt}(D)\big(J_3(t,|D|)P_{\Psi_0}+J_4(t,|D|)P_{\Psi_1}\big)\right\|_{L^2}\lesssim t^{-\frac{n}{4}}\sqrt{|P_{\Psi_0}|^2+|P_{\Psi_1}|^2}
	\end{align*}
	as $t\gg1$ for any $n\geqslant 1$.
\end{prop}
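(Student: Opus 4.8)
The plan is to imitate the proof of Proposition~\ref{Prop-J0,J1}, the decisive structural difference being that the two temperature profiles $\widehat{J}_{2+j}(t,|\xi|)=(|\xi|^{-2}\partial_t^2+|\xi|^2)\widehat{J}_j(t,|\xi|)$, $j=0,1$, carry \emph{no} $|\xi|^{-2}$ singularity near $\xi=0$; hence the scaling argument will produce the purely parabolic rate $t^{-n/4}$ in place of the growth-type rate $t^{1-n/4}$. Put $\kappa:=(a_0-a_1)^2+a_2^2$. First I would apply $|\xi|^{-2}\partial_t^2+|\xi|^2$ to the explicit formulas for $\widehat{J}_0,\widehat{J}_1$; by direct differentiation the $|\xi|^{-2}$ factors cancel and
\begin{align*}
\widehat{J}_2(t,|\xi|)&=\frac{1}{\kappa}\Big((a_0^2+1)\mathrm{e}^{-a_0|\xi|^2t}+\big[(a_2^2-a_1^2-1)\cos(a_2|\xi|^2t)-2a_1a_2\sin(a_2|\xi|^2t)\big]\mathrm{e}^{-a_1|\xi|^2t}\Big),\\
\widehat{J}_3(t,|\xi|)&=\frac{1}{a_2\kappa}\big[(a_1^2-a_2^2+1)\sin(a_2|\xi|^2t)-2a_1a_2\cos(a_2|\xi|^2t)\big]\mathrm{e}^{-a_1|\xi|^2t},
\end{align*}
which are linear combinations of $\mathrm{e}^{-a_0|\xi|^2t}$, $\cos(a_2|\xi|^2t)\mathrm{e}^{-a_1|\xi|^2t}$, $\sin(a_2|\xi|^2t)\mathrm{e}^{-a_1|\xi|^2t}$ with constant coefficients; in particular $\chi_{\intt}(\xi)\big(|\widehat{J}_2(t,|\xi|)|+|\widehat{J}_3(t,|\xi|)|\big)\lesssim\chi_{\intt}(\xi)\,\mathrm{e}^{-c|\xi|^2t}$ for any fixed $0<c<\min\{a_0,a_1\}$, and the symbols extend continuously to $\xi=0$ with $\widehat{J}_2(t,0)=(a_0^2+a_2^2-a_1^2)/\kappa$ and $\widehat{J}_3(t,0)=-2a_1/\kappa$.

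Next I would set $\widetilde{I}(t):=\big\|\chi_{\intt}(D)\big(\widehat{J}_2(t,|D|)P_{\Psi_0}+\widehat{J}_3(t,|D|)P_{\Psi_1}\big)\big\|_{L^2}^2$, pass to polar coordinates, and substitute $w=r\sqrt{t}$, so that $r^2t=w^2$ and the two symbols above become functions of $w$ alone, say $\mathcal{J}_2(w),\mathcal{J}_3(w)$; this yields
\begin{align*}
\widetilde{I}(t)\simeq t^{-\frac{n}{2}}\int_0^{\varepsilon_0\sqrt{t}}\big|\mathcal{J}_2(w)P_{\Psi_0}+\mathcal{J}_3(w)P_{\Psi_1}\big|^2w^{n-1}\,\mathrm{d}w .
\end{align*}
For the upper bound, $|\mathcal{J}_2(w)|+|\mathcal{J}_3(w)|\lesssim\mathrm{e}^{-cw^2}$ bounds the $w$-integral by $(|P_{\Psi_0}|^2+|P_{\Psi_1}|^2)\int_0^\infty\mathrm{e}^{-2cw^2}w^{n-1}\,\mathrm{d}w<\infty$, so $\widetilde{I}(t)\lesssim t^{-n/2}(|P_{\Psi_0}|^2+|P_{\Psi_1}|^2)$ for all $n\geqslant1$ and $t\gg1$.

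For the lower bound I would compute the limit of the integrand as $w\downarrow0$. From the first paragraph $\mathcal{J}_2(0)=(a_0^2+a_2^2-a_1^2)/\kappa$, $\mathcal{J}_3(0)=-2a_1/\kappa$; inserting $\Psi_0=2a_1u_1+\theta_0$ and $\Psi_1=(a_0^2+a_2^2-a_1^2)u_1+(a_0-a_1)\theta_0$, the $P_{u_1}$-terms cancel and the $P_{\theta_0}$-coefficient reduces, via $(a_0^2+a_2^2-a_1^2)-2a_1(a_0-a_1)=(a_0-a_1)^2+a_2^2=\kappa$, to $1$, so that
\begin{align*}
\lim_{w\downarrow0}\big(\mathcal{J}_2(w)P_{\Psi_0}+\mathcal{J}_3(w)P_{\Psi_1}\big)=P_{\theta_0}.
\end{align*}
By continuity there is $\delta_0\in(0,1)$ with $\big|\mathcal{J}_2(w)P_{\Psi_0}+\mathcal{J}_3(w)P_{\Psi_1}\big|\geqslant\tfrac12|P_{\theta_0}|$ on $(0,\delta_0)$; restricting the $w$-integral to $(0,\delta_0)$ --- legitimate once $t\gg1$ is so large that $\varepsilon_0\sqrt{t}\geqslant\delta_0$ --- gives $\widetilde{I}(t)\gtrsim t^{-n/2}|P_{\theta_0}|^2\int_0^{\delta_0}w^{n-1}\,\mathrm{d}w\gtrsim t^{-n/2}|P_{\theta_0}|^2$. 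Taking square roots in the last two paragraphs produces the asserted two-sided estimate.

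The only genuinely delicate ingredient is the algebraic collapse above: one must check that $\mathcal{J}_2(0)P_{\Psi_0}+\mathcal{J}_3(0)P_{\Psi_1}$ equals \emph{exactly} $P_{\theta_0}$, i.e. that the $P_{u_1}$-contributions vanish identically and that $(a_0^2+a_2^2-a_1^2)-2a_1(a_0-a_1)=\kappa$. This cancellation is what makes the lower bound sharp and singles out $P_{\theta_0}$ --- rather than $P_{u_1}$, $P_{\Psi_0}$ or $P_{\Psi_1}$ --- as the quantity controlling the large-time amplitude of the temperature (consistently with Remark~\ref{Rem-Strucutre-J3,4}). Everything else is a verbatim repetition of the scaling already carried out for the vertical displacement, together with the tedious but mechanical differentiations in the first paragraph.
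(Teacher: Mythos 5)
Your argument is correct and mirrors the paper's own proof, which likewise reduces everything to the scaling $w=r\sqrt{t}$ used for Proposition~\ref{Prop-J0,J1} and observes that the temperature symbols $\widehat{J}_2,\widehat{J}_3$ are bounded near $\xi=0$, the only new ingredient being the algebraic identity $(a_0^2+a_2^2-a_1^2)P_{\Psi_0}-2a_1P_{\Psi_1}=\kappa\,P_{\theta_0}$ (which you verify and the paper states in passing). The paper's proof is merely a terse pointer to the ideas of Proposition~\ref{Prop-J0,J1} and to \cite{Ikehata=2014,Ikehata-Onodera=2017}; you have simply filled in the same computations explicitly, including the direct differentiation of $\widehat{J}_0,\widehat{J}_1$, so no genuinely new route is taken.
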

\begin{proof}
	Recalling the structure of multipliers in Remark \ref{Rem-Strucutre-J3,4}, different from $\widehat{J}_{1,2}(t,|\xi|)$ in Proposition \ref{Prop-J0,J1}, there is no any singularity as $|\xi|\to0$. Then, by applying the ideas in \cite{Ikehata=2014,Ikehata-Onodera=2017}, the upper bound can be estimated easily. For the lower one,  following the same idea as the proof of Proposition \ref{Prop-J0,J1}, we obtain 
	\begin{align*}
		\left\|\chi_{\intt}(D)\big(J_3(t,|D|)P_{\Psi_0}+J_4(t,|D|)P_{\Psi_1}\big)\right\|_{L^2}^2\gtrsim t^{-\frac{n}{2}}\left|(a_0^2+a_2^2-a_1^2)P_{\Psi_0}-2a_1P_{\Psi_1}\right|^2\gtrsim t^{-\frac{n}{2}}|P_{\theta_0}|^2
	\end{align*}
	for $t\gg1$, which completes our proof.
\end{proof}

Moreover, we state another upper bound estimate, whose proof is the same as the one of Proposition \ref{Prop-J0,J1} with weaker singularities, precisely,
\begin{align*}
	\int_0^{\varepsilon_0}|\sin(r^2t)|^2r^{n-3}\mathrm{e}^{-2cr^2t}\mathrm{d}r&\lesssim t^{1-\frac{n}{2}}\int_0^{\varepsilon_0^2t}\left|\frac{\sin w}{w}\right|^2 w^{\frac{n}{2}}\mathrm{e}^{-2cw}\mathrm{d}w\lesssim t^{1-\frac{n}{2}}
\end{align*}
for large-time $t\gg1$.
\begin{coro}\label{Coro-new}
	The following upper bound estimates for the multiplier hold:
	\begin{align*}
		\left\|\chi_{\intt}(D)\frac{|\sin(|D|^2t)|}{|D|}\mathrm{e}^{-c|D|^2t}\right\|_{L^2}\lesssim t^{\frac{1}{2}-\frac{n}{4}},
	\end{align*}
	as $t\gg1$ for any $n\geqslant 1$.
\end{coro}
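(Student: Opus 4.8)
The plan is to turn the operator norm into an explicit radial integral by Plancherel's theorem, to identify it with the integral already estimated in the display preceding the statement, and then to conclude by a single rescaling.

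First I would write, by Parseval's identity and the support property of $\chi_{\intt}$,
\begin{align*}
\left\|\chi_{\intt}(D)\frac{|\sin(|D|^2t)|}{|D|}\mathrm{e}^{-c|D|^2t}\right\|_{L^2}^2
=\int_{|\xi|\leqslant\varepsilon_0}\frac{|\sin(|\xi|^2t)|^2}{|\xi|^2}\,\mathrm{e}^{-2c|\xi|^2t}\,\mathrm{d}\xi,
\end{align*}
and then pass to polar coordinates, which produces a constant multiple of $\int_0^{\varepsilon_0}|\sin(r^2t)|^2 r^{n-3}\mathrm{e}^{-2cr^2t}\,\mathrm{d}r$. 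This is exactly the quantity appearing in the display just before the corollary; applying that bound gives $\lesssim t^{1-\frac{n}{2}}$, and taking square roots yields the asserted rate $t^{\frac{1}{2}-\frac{n}{4}}$.

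For completeness I would spell out why that radial bound holds. After the change of variables $w=|\xi|^2 t$ the integral becomes $\frac{1}{2}\,t^{1-\frac{n}{2}}\int_0^{\varepsilon_0^2 t}\left|\frac{\sin w}{w}\right|^2 w^{\frac{n}{2}}\mathrm{e}^{-2cw}\,\mathrm{d}w$, and since $\left|\frac{\sin w}{w}\right|$ is bounded on $(0,\infty)$ the integrand is dominated by $w^{\frac{n}{2}}\mathrm{e}^{-2cw}$, whose integral over $(0,\infty)$ is finite for every $n\geqslant 1$. Hence the $w$-integral is bounded uniformly in $t$ and the prefactor $t^{1-\frac{n}{2}}$ carries the whole decay.

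The step that deserves a word of comment — and the only place where the oscillating factor is genuinely used — is the low dimensions $n=1,2$: there $r^{n-3}$ fails to be locally integrable at the origin, so the estimate would be false for the bare multiplier $|\xi|^{-1}\mathrm{e}^{-c|\xi|^2t}$, and it is the vanishing $|\sin(r^2t)|^2\leqslant(r^2t)^2$ near $r=0$ that converts the would-be $w^{-2}$ singularity into the harmless power $w^{n/2}$ above. For $n\geqslant 3$ the sine is not needed for convergence, but the same rescaling still produces the same rate. Beyond this I do not foresee any obstacle: the corollary is a direct, slightly-weaker-singularity analogue of the estimate established in Proposition \ref{Prop-J0,J1}.
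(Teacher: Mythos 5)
Your proof is correct and follows the paper's route exactly: pass to polar coordinates, rescale $w=r^{2}t$ to extract the factor $t^{1-\frac{n}{2}}$, and bound $|\sin w/w|$ by a constant so the remaining $w$-integral converges uniformly in $t$ for every $n\geqslant 1$. Two pedantic remarks only: since $\chi_{\intt}$ is a smooth cutoff with $0\leqslant\chi_{\intt}\leqslant 1$ supported in $\{|\xi|\leqslant\varepsilon_{0}\}$ rather than the sharp indicator, the first Parseval display should read $\leqslant$ rather than $=$; and the ``would-be $w^{-2}$ singularity'' in your commentary is really $w^{\frac{n-4}{2}}$ after rescaling (which the extra $w^{2}$ from $|\sin w|^{2}$ promotes to $w^{\frac{n}{2}}$), but neither point affects the validity of the argument.
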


Motivated by the estimates \eqref{Eq-new-01}, some applications of the Plancherel theorem and the Hausdorff-Young inequality in Proposition \ref{Prop-pointwise} yield immediately that
\begin{align*}
\|u(t,\cdot)\|_{L^2}&\lesssim\|\chi_{\intt}(\xi)\hat{u}(t,\xi)\|_{L^2}+\left\|\big(1-\chi_{\intt}(\xi)\big)\hat{u}(t,\xi)\right\|_{L^2}\\
&\lesssim \left\|\chi_{\intt}(\xi)\mathrm{e}^{-c|\xi|^2t}\right\|_{L^2}\|u_0\|_{L^1}+\left\|\chi_{\intt}(\xi)\left(t+\frac{|\sin(a_2|\xi|^2t)|}{|\xi|^2}\right)\mathrm{e}^{-c|\xi|^2t}\right\|_{L^2}\left(\|u_1\|_{L^1}+\|\theta_0\|_{L^1}\right)\\
&\quad+\mathrm{e}^{-ct}\left(\|u_0\|_{L^2}+\|u_1\|_{L^2}+\|\theta_0\|_{L^2}\right)\\
&\lesssim t^{-\frac{n}{4}}\|u_0\|_{L^2\cap L^1}+t^{1-\frac{n}{4}}\left(\|u_1\|_{L^2\cap L^1}+\|\theta_0\|_{L^2\cap L^1}\right),
\end{align*}
and
\begin{align*}
	\|\theta(t,\cdot)\|_{L^2}&\lesssim\left\|\chi_{\intt}(\xi)|\xi|^2\mathrm{e}^{-c|\xi|^2t}\right\|_{L^2}\|u_0\|_{L^1}+\left\|\chi_{\intt}(\xi)\mathrm{e}^{-c|\xi|^2t}\right\|_{L^2}\left(\|u_1\|_{L^1}+\|\theta_0\|_{L^1}\right)\\
	&\quad+\mathrm{e}^{-ct}\left(\|u_0\|_{\dot{H}^2}+\|u_1\|_{L^2}+\|\theta_0\|_{L^2}\right)\\
	&\lesssim t^{-1-\frac{n}{4}}\|u_0\|_{\dot{H}^2\cap L^1}+t^{-\frac{n}{4}}\left(\|u_1\|_{L^2\cap L^1}+\|\theta_0\|_{L^2\cap L^1}\right),
\end{align*}
as $t\gg1$, where we applied Proposition \ref{Prop-J0,J1} from the upper side. By an analogous way as well as Proposition \ref{Prop-refine}, we obtain
\begin{align}\label{I2t}
	&\|u(t,\cdot)-J_0(t,|D|)\Psi_0(\cdot)-J_1(t,|D|)\Psi_1(\cdot)\|_{L^2_{\chi}}\notag\\
	&\qquad= \left\|\big(\widehat{K}_0(t,|\xi|)-|\xi|^4\widehat{K}_2(t,|\xi|)\big)\hat{u}_0(\xi)\right\|_{L^2_{\chi}}\lesssim t^{-\frac{n}{4}}\|u_0\|_{L^1},
\end{align} 
and
\begin{align*}
	&\|\theta(t,\cdot)-J_2(t,|D|)\Psi_0(\cdot)-J_3(t,|D|)\Psi_1(\cdot)\|_{L^2_{\chi}}\\
	&\qquad= \left\|(|\xi|^{-2}\partial_t^2+|\xi|^2)\big(\widehat{K}_0(t,|\xi|)-|\xi|^4\widehat{K}_2(t,|\xi|)\big)\hat{u}_0(\xi)\right\|_{L^2_{\chi}}\lesssim t^{-1-\frac{n}{4}}\|u_0\|_{ L^1},
\end{align*} 
as $t\gg1$. Employing the triangle inequality, one has
\begin{align*}
\|u(t,\cdot)-J_0(t,\cdot)P_{\Psi_0}-J_1(t,\cdot)P_{\Psi_1}\|_{L^2_{\chi}}&\leqslant \|\hat{u}(t,\xi)-\widehat{J}_0(t,|\xi|)\widehat{\Psi}_0(\xi)-\widehat{J}_1(t,|\xi|)\widehat{\Psi}_1(\xi)\|_{L^2_{\chi}}\\
&\quad+\left\|\widehat{J}_0(t,|\xi|)\big(\widehat{\Psi}_0(\xi)-P_{\Psi_0}\big)\right\|_{L^2_{\chi}}+\left\|\widehat{J}_1(t,|\xi|)\big(\widehat{\Psi}_1(\xi)-P_{\Psi_1}\big)\right\|_{L^2_{\chi}}\\
&=:I_2(t)+I_3(t)+I_4(t).
\end{align*}
Indeed, the derived estimate \eqref{I2t} implies
\begin{align*}
	I_2(t)\lesssim t^{-\frac{n}{4}}\|u_0\|_{ L^1}.
\end{align*}
Because of the decomposition of combined datum $\widehat{\Psi}_j(\xi)-P_{\Psi_j}=A_j(\xi)-iB_j(\xi)$  for $j=0,1$ with 
\begin{align*}
A_j(\xi):=\int_{\mb{R}^n}\Psi_j(x)\big(\cos(x\cdot\xi)-1\big)\mathrm{d}x\ \ \mbox{and}\ \ B_j(\xi):=\int_{\mb{R}^n}\Psi_j(x)\sin(x\cdot\xi)\mathrm{d}x.	
\end{align*}
 By using  \cite[Lemma 2.2]{Ikehata=2014}, the auxiliary functions can be estimated by
\begin{align*}
	|A_j(\xi)|+|B_j(\xi)|\lesssim|\xi|\,\|\Psi_j\|_{L^{1,1}}
\end{align*}
for $j=0,1$.  Therefore, similarly to the second line of \eqref{Eq-new-01}, it gives
\begin{align*}
	I_3(t)+I_4(t)&\lesssim\sum\limits_{j=0,1}\|\chi_{\intt}(\xi)|\xi|\widehat{J}_j(t,|\xi|)\|_{L^2}\|\Psi_j\|_{L^{1,1}}\\
	&\lesssim \left(\,\left\|\chi_{\intt}(\xi)\frac{|\sin(\tilde{c}|\xi|^2t)|}{|\xi|}\mathrm{e}^{-c|\xi|^2t}\right\|_{L^2}+t\left\|\chi_{\intt}(\xi)|\xi|\mathrm{e}^{-c|\xi|^2t}\right\|_{L^2}\right)\left(\|u_1\|_{L^{1,1}}+\|\theta_0\|_{L^{1,1}}\right)\\
	&\lesssim t^{\frac{1}{2}-\frac{n}{4}}\left(\|u_1\|_{L^{1,1}}+\|\theta_0\|_{L^{1,1}}\right)
\end{align*}
as $t\gg1$, in which we considered Corollary \ref{Coro-new}. In other words,
\begin{align}\label{Error-u}
	\|u(t,\cdot)-J_0(t,\cdot)P_{\Psi_0}-J_1(t,\cdot)P_{\Psi_1}\|_{L^2_{\chi}}\lesssim t^{-\frac{n}{4}}\|u_0\|_{ L^1}+t^{\frac{1}{2}-\frac{n}{4}}\left(\|u_1\|_{L^{1,1}}+\|\theta_0\|_{L^{1,1}}\right)
\end{align}
for large-time.  Similarly, one may also obtain
\begin{align}\label{Error-theta}
	\|\theta(t,\cdot)-J_2(t,\cdot)P_{\Psi_0}-J_3(t,\cdot)P_{\Psi_1}\|_{L^2_{\chi}}\lesssim t^{-1-\frac{n}{4}}\|u_0\|_{ L^1}+t^{-\frac{1}{2}-\frac{n}{4}}\left(\|u_1\|_{L^{1,1}}+\|\theta_0\|_{L^{1,1}}\right)
\end{align}
for large-time.  The last two estimates demonstrate Corollary \ref{Coro-Linear}. Lastly, since \eqref{Error-u}, \eqref{Error-theta} as well as Propositions \ref{Prop-J0,J1}, \ref{Prop-J2,J3} holding for large-time, Minkowski's inequality results
\begin{align*}
\|u(t,\cdot)\|_{L^2}&\geqslant\|J_0(t,\cdot)P_{\Psi_0}+J_1(t,\cdot)P_{\Psi_1}\|_{L^2_{\chi}}-\|u(t,\cdot)-J_0(t,\cdot)P_{\Psi_0}-J_1(t,\cdot)P_{\Psi_1}\|_{L^2_{\chi}}\\
&\gtrsim t^{1-\frac{n}{4}}|P_{u_1}|-t^{-\frac{n}{4}}\|u_0\|_{ L^1}-t^{\frac{1}{2}-\frac{n}{4}}\left(\|u_1\|_{L^{1,1}}+\|\theta_0\|_{L^{1,1}}\right)\gtrsim t^{1-\frac{n}{4}}|P_{u_1}|
\end{align*}
for $t\gg1$ due to $|P_{u_1}|\neq0$ and $u_0\in L^1$ as well as $u_1,\theta_0\in L^{1,1}$, moreover,
\begin{align*}
	\|\theta(t,\cdot)\|_{L^2}\gtrsim t^{-\frac{n}{4}}|P_{\theta_0}|- t^{-1-\frac{n}{4}}\|u_0\|_{ L^1}-t^{-\frac{1}{2}-\frac{n}{4}}\left(\|u_1\|_{L^{1,1}}+\|\theta_0\|_{L^{1,1}}\right)\gtrsim t^{-\frac{n}{4}}|P_{\theta_0}|.
\end{align*}
 In conclusion, our proof of Theorem \ref{Thm-Linear} is complete.

\section{Singular limits problem for the vanishing thermal parameter}\label{Sec-Singular-limit}
Let us turn to the second purpose of this paper. For the sake of readability and exactitude, we restate classical thermoelastic plate equations \eqref{Eq-Linear-TEP} by the next way:
\begin{align}\label{Eq-u,theta-epsilon}
\begin{cases}
u_{tt}^{\epsilon}+\Delta^2 u^{\epsilon}+\Delta\theta^{\epsilon}=0,&x\in\mb{R}^n,\ t>0,\\
\epsilon\theta_t^{\epsilon}-\Delta\theta^{\epsilon}-\Delta u_t^{\epsilon}=0,&x\in\mb{R}^n,\ t>0,\\
u^{\epsilon}(0,x)=u_0(x),\ u_t^{\epsilon}(0,x)=u_1(x),\ \theta^{\epsilon}(0,x)=\theta_0(x),&x\in\mb{R}^n,
\end{cases}
\end{align}
where $\epsilon>0$ denotes the thermal parameter. We recall that the formal limit model with $\epsilon=0$ was introduced by \eqref{Eq-u,theta,0}. Thus, our aim is to demonstrate $u^{\epsilon}\to u^0$ in some norms, and find the second-order profile for the small $\epsilon$. Before rigorously demonstrating the desired convergence results, it is important to understand the influence of the small thermal parameter $\epsilon$, which will provide some profiles in a formal way. Throughout this section, we set $\epsilon>0$ to be a small number, and $C>0$ to be a suitable constant independent of $t$ as well as $\epsilon$.
\subsection{Formal analysis of thermoelastic plate equations}
In order to analyze the influence of small parameter and asymptotic profiles of solution to \eqref{Eq-u,theta-epsilon} with respect to $0<\epsilon\ll 1$, motivated by the boundary layer theory and the multi-scale analysis (see, for example, \cite{Holmes=1995}), the solution $(u^{\epsilon},\theta^{\epsilon})$ of thermoelastic plate equations \eqref{Eq-u,theta-epsilon} owns the expansions
\begin{align}\label{S4-repre}
\begin{cases}
\displaystyle{u^{\epsilon}(t,x)=\sum\limits_{j\geqslant0}\epsilon^j\left(u^{I,j}(t,x)+u^{L,j}(\epsilon^{-1}t,x)\right),}\\
\displaystyle{\theta^{\epsilon}(t,x)=\sum\limits_{j\geqslant0}\epsilon^j\left(\theta^{I,j}(t,x)+\theta^{L,j}(\epsilon^{-1}t,x)\right),}
\end{cases}
\end{align}
where all terms in the last representations are assumed to be sufficiently smooth. Here, $u^{I,j}=u^{I,j}(t,x)$, $\theta^{I,j}=\theta^{I,j}(t,x)$ stand for the dominant profiles for each order expansion, and $u^{L,j}=u^{L,j}(z,x)$, $\theta^{L,j}=\theta^{L,j}(z,x)$ with $z:=\epsilon^{-1}t$ denote the profiles decaying to zero as $z\to\infty$. These components will be fixed later. 

By employing the perturbation theory, we plug the new ansatz \eqref{S4-repre} into two equations of \eqref{Eq-u,theta-epsilon}. We immediately arrive at
\begin{align*}
	\begin{cases}
	\displaystyle{\sum\limits_{j\geqslant 0}\epsilon^j(u^{I,j}_{tt}+\epsilon^{-2}u^{L,j}_{zz})+\sum\limits_{j\geqslant0}\epsilon^j(\Delta^2 u^{I,j}+\Delta^{2}u^{L,j})+\sum\limits_{j\geqslant0}\epsilon^j(\Delta \theta^{I,j}+\Delta \theta^{L,j})=0,}\\
	\displaystyle{\sum\limits_{j\geqslant0}\epsilon^j(\epsilon\theta^{I,j}_t+\theta^{L,j}_z)-\sum\limits_{j\geqslant0}\epsilon^j(\Delta\theta^{I,j}+\Delta\theta^{L,j})-\sum\limits_{j\geqslant0}\epsilon^j(\Delta u^{I,j}_t+\epsilon^{-1}\Delta u^{L,j}_z)=0.  }
	\end{cases}
\end{align*}
Matching the order of $\epsilon^{j+1}$ (here, we consider $j\in\mb{Z}$ formally), we claim 
\begin{align}\label{S4-02}
\begin{cases}
u^{I,j+1}_{tt}+\Delta^2u^{I,j+1}+\Delta \theta^{I,j+1}=-u^{L,j+3}_{zz}-\Delta^2 u^{L,j+1}-\Delta\theta^{L,j+1},\\
\theta^{I,j}_t-\Delta\theta^{I,j+1}-\Delta u^{I,j+1}_t=-\theta^{L,j+1}_z+\Delta\theta^{L,j+1}+\Delta u_z^{L,j+2}.
\end{cases}
\end{align}
Letting $\epsilon\downarrow0$, i.e. $z\to\infty$, the right-hand sides of \eqref{S4-02} tend to zero so that
\begin{align}\label{S4-03}
\begin{cases}
u^{I,j+1}_{tt}+\Delta^2 u^{I,j+1}+\Delta\theta^{I,j+1}=0,\\
\theta^{I,j}_t-\Delta\theta^{I,j+1}-\Delta u^{I,j+1}_t=0,
\end{cases}
\end{align}
with $u^{I,j}\equiv0\equiv \theta^{I,j}$ if $j$ becomes negative, as well as
\begin{align}\label{S4-04}
	\begin{cases}
	u^{L,j+3}_{zz}+\Delta^2u^{L,j+1}+\Delta\theta^{L,j+1}=0,\\
	\theta^{L,j+1}_z-\Delta\theta^{L,j+1}-\Delta u^{L,j+2}_z=0,
	\end{cases}
\end{align}
with $u^{L,j}\equiv0\equiv \theta^{L,j}$ if $j$ becomes negative. Later we will derive the profiles by carefully  determining their initial conditions.

From the initial conditions in the original problem \eqref{Eq-u,theta-epsilon} with the ansatz \eqref{S4-repre}, one notices
\begin{align}\label{S4-N-01}
\begin{cases}
\displaystyle{u_0(x)=u^{I,0}(0,x)+u^{L,0}(0,x)+\sum\limits_{j\geqslant 1}\epsilon^j\left(u^{I,j}(0,x)+u^{L,j}(0,x)\right),}\\
\displaystyle{u_1(x)=\epsilon^{-1}u^{L,0}_z(0,x)+u^{I,0}_t(0,x)+u^{L,1}_z(0,x)+\sum\limits_{j\geqslant 1}\epsilon^j\left(u^{I,j}_t(0,x)+u^{L,j+1}_z(0,x)\right),}\\
\displaystyle{\theta_0(x)=\theta^{I,0}(0,x)+\theta^{L,0}(0,x)+\sum\limits_{j\geqslant 1}\epsilon^j\left(\theta^{I,j}(0,x)+\theta^{L,j}(0,x)\right).}
\end{cases}
\end{align}
Under our consideration that three initial datum in \eqref{Eq-u,theta-epsilon} are independent of $\epsilon$ in general, we may naturally take vanishing value of $u^{I,j}(0,x)$, $u^{L,j}(0,x)$, $u^{I,j}_t(0,x)$, $u^{L,j+1}_z(0,x)$ as well as $\theta^{I,j}(0,x)$, $\theta^{L,j}(0,x)$ for any $j\geqslant 1$.  The condition \eqref{S4-N-01} can be reset as
\begin{align*}
	\begin{cases}
	u^{I,0}(0,x)+u^{L,0}(0,x)=u_0(x),\\
	u^{I,0}_t(0,x)+u^{L,1}_z(0,x)=u_1(x),\ u^{L,0}_z(0,x)=0,\\
	\theta^{I,0}(0,x)+\theta^{L,0}(0,x)=\theta_0(x).
	\end{cases}
\end{align*}
Indeed, we did not fix all initial datum in the present step. In the next subsection, we will follow the methodology of derivations for the Prandtl equation in the boundary layer theory to get the explicit formula of initial conditions.

\subsection{Formal derivation of asymptotic profiles for the small thermal parameter}
To begin with our deductions, let us take $j=-1$ in \eqref{S4-03}, and $j=-3,-2$ in \eqref{S4-04}, respectively, to find
\begin{align}\label{S4-05}
\begin{cases}
u^{I,0}_{tt}+\Delta^2u^{I,0}+\Delta\theta^{I,0}=0,&x\in\mb{R}^n,\ t>0,\\
-\Delta\theta^{I,0}-\Delta u_t^{I,0}=0,&x\in\mb{R}^n,\ t>0,\\
u^{I,0}(0,x)=u_0^{I,0}(x),\ u^{I,0}_t(0,x)=u^{I,0}_1(x),&x\in\mb{R}^n,
\end{cases}
\end{align}
in which \eqref{S4-05}$_1+$\eqref{S4-05}$_2$ yields the plate equation with structural damping $u^{I,0}_{tt}+\Delta^2u^{I,0}-\Delta u^{I,0}_t=0$, moreover,
\begin{align}\label{S4-06}
\begin{cases}
u^{L,0}_{zz}=0,\ \ u^{L,1}_{zz}=0,\ \ \Delta u^{L,0}_z=0,&x\in\mb{R}^n,\ z>0,\\
u^{L,0}(0,x)=u_0^{L,0}(x),\ u^{L,0}_z(0,x)=u^{L,1}(0,x)=0,\ u^{L,1}_z(0,x)=u_1^{L,1}(x), &x\in\mb{R}^n,
\end{cases}
\end{align}
with $u_0^{I,0}(x)+u^{L,0}_0(x)=u_0(x)$ and $u^{I,0}_1(x)+u^{L,1}_1(x)=u_1(x)$. By constructing new functions $u^{P,j}=u^{P,j}(t,x)$ with $j=0,1$ such that   
\begin{align}\label{S4-07}
u^{P,0}(t,x):=u^{L,0}(\epsilon^{-1}t,x)+u^{I,0}_0(x),\ \ u^{P,1}(t,x):=\epsilon u^{L,1}(\epsilon^{-1}t,x)+tu^{I,0}_1(x),
\end{align}
they satisfy the following equations (we used \eqref{S4-06} actually):
\begin{align*}
\begin{cases}
u^{P,0}_{tt}=0,\ \ u_{tt}^{P,1}=0,&x\in\mb{R}^n,\ t>0,\\
u^{P,0}(0,x)=u_0(x),\ u^{P,0}_t(0,x)=u^{P,1}(0,x)=0,\ u^{P,1}_t(0,x)=u_1(x),&x\in\mb{R}^n.
\end{cases}
\end{align*}
The solutions of the previous models can be easily obtained by $u^{P,0}(t,x)=u_0(x)$ and $u^{P,1}(t,x)=tu_1(x)$. Again with $\epsilon\downarrow0$ in \eqref{S4-07}, it gives $u^{I,0}_0(x)=u_0(x)$ as well as $u^{I,0}_1(x)=u_1(x)$, respectively. Coming back to \eqref{S4-06}, due to all vanishing initial datum, we arrive at $u^{L,0}(z,x)\equiv0\equiv u^{L,1}(z,x)$ for any $z\geqslant0$ and $x\in\mb{R}^n$. Furthermore, we claim $u^{I,0}(t,x)\equiv u^0(t,x)$ which is the solution to the structurally damped plate equation \eqref{Eq-u,theta,0}. The formal expansion \eqref{S4-repre} so far can be updated by
\begin{align*}
\begin{cases}
\displaystyle{u^{\epsilon}(t,x)=u^0(t,x)+\sum\limits_{j\geqslant 1}\epsilon^ju^{I,j}(t,x)+\sum\limits_{j\geqslant 2}\epsilon^j u^{L,j}(\epsilon^{-1}t,x)},\\
\displaystyle{\theta^{\epsilon}(t,x)=\theta^{I,0}(t,x)+\theta^{L,0}(t,x)+\sum\limits_{j\geqslant 1}\left(\theta^{I,j}(t,x)+\theta^{L,j}(\epsilon^{-1}t,x)\right)}.
\end{cases}
\end{align*}

To determine higher-order profiles of $u^{\epsilon}$, we may choose $j=0$ in \eqref{S4-03} to see
\begin{align}\label{S4-08}
\begin{cases}
u^{I,1}_{tt}+\Delta^2 u^{I,1}-\Delta u^{I,1}_t=-\theta_t^{I,0},&x\in\mb{R}^n,\ t>0,\\
u^{I,1}(0,x)=u^{I,1}_t(0,x)=0,\ \theta^{I,0}(0,x)=\theta_0^{I,0}(x),&x\in\mb{R}^n.
\end{cases}
\end{align}
 The second relation in \eqref{S4-05} may imply $\theta^{I,0}(t,x)=-u^{I,0}_t(t,x)$, therefore, $\theta^{I,0}(0,x)=:\theta^{I,0}_0(x)=-u_1(x)$. In addition, taking $j=-1$ in \eqref{S4-04}, we are able to find 
\begin{align*}
\begin{cases}
u^{L,2}_{zz}=-\Delta\theta^{L,0},&x\in\mb{R}^n,\ z>0,\\
\theta^{L,0}_z-\Delta\theta^{L,0}=0,&x\in\mb{R}^n,\ z>0,\\
u^{L,2}(0,x)=u^{L,2}_z(0,x)=0,\ \theta^{L,0}(0,x)=u_1(x)+\theta_0(x),&x\in\mb{R}^n,
\end{cases}
\end{align*}
where we employed $\theta_0^{L,0}(0,x)=\theta_0(x)-\theta^{I,0}_0(x)=u_1(x)+\theta_0(x)$.  The second equation is the standard heat model which can be uniquely solved by
\begin{align}\label{S4-heat}
\theta^{L,0}(\epsilon^{-1}t,x)&=\mathrm{e}^{-|D|^2\frac{t}{\epsilon}}\big(u_1(x)+\theta_0(x)\big)=\frac{\epsilon^{n/2}}{(4\pi t)^{n/2}}\int_{\mb{R}^n}\mathrm{e}^{-\frac{\epsilon|x-y|^2}{4t}}\big(u_1(y)+\theta_0(y)\big)\mathrm{d}y.
\end{align}
Combining the last representation and the derived differential equation, we obtain
\begin{align*}
u^{L,2}(\epsilon^{-1}t,x)=\left(\frac{1}{|D|^2}\left(\mathrm{e}^{-|D|^2\frac{t}{\epsilon}}-1\right)+\frac{t}{\epsilon}\right)\big(u_1(x)+\theta_0(x)\big).
\end{align*}
Summarizing the last statements, we are able to conclude the next result for formal higher-order profiles of the solution to \eqref{Eq-u,theta-epsilon}.
\begin{prop}\label{Prop-expansion}
The solution $(u^{\epsilon},\theta^{\epsilon})$ to the Cauchy problem for thermoelastic plate equations \eqref{Eq-u,theta-epsilon} with the small $\epsilon>0$ formally have the following asymptotic expansions:
\begin{align*}
\begin{cases}
\displaystyle{u^{\epsilon}(t,x)=u^0(t,x)+\epsilon u^{I,1}(t,x)+\sum\limits_{j\geqslant2}\epsilon^j\left(u^{I,j}(t,x)+u^{L,j}(\epsilon^{-1}t,x)\right),}\\
\displaystyle{\theta^{\epsilon}(t,x)=-u_t^0(t,x)+\mathrm{e}^{-|D|^2\frac{t}{\epsilon}}\big(u_1(x)+\theta_0(x)\big)+\sum\limits_{j\geqslant 1}\epsilon^j\left(\theta^{I,j}(t,x)+\theta^{L,j}(\epsilon^{-1}t,x)\right),}
\end{cases}
\end{align*}
in which
\begin{itemize}
	\item $u^0=u^0(t,x)$ is the solution to the structurally damped plate equation \eqref{Eq-u,theta,0};
	\item $u^{I,1}=u^{I,1}(t,x)$ is the solution to the inhomogeneous structurally damped plate equation
	\begin{align*}
	\begin{cases}
	u^{I,1}_{tt}+\Delta^2u^{I,1}-\Delta u^{I,1}_t=-\Delta^2 u^0+\Delta u^0_t,&x\in\mb{R}^n,\ t>0,\\
	u^{I,1}(0,x)=u_t^{I,1}(0,x)=0,&x\in\mb{R}^n;
	\end{cases}
	\end{align*}
\item the pair of $u^{I,j+1}=u^{I,j+1}(t,x)$ and $\theta^{I,j}=\theta^{I,j}(t,x)$ for $j\geqslant 1$ is the solution to \eqref{S4-03} with vanishing datum;
\item the pair of $u^{L,j+1}=u^{L,j+1}(z,x)$ and $\theta^{L,j}=\theta^{L,j}(z,x)$ with $z=\epsilon^{-1}t$ for $j\geqslant 1$ is the solution to \eqref{S4-04} with vanishing datum.
\end{itemize}
\end{prop}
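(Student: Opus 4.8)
The plan is to assemble the formal computation in three stages, following the two-scale boundary-layer scheme set up in Subsections 4.1 and 4.2. First I would record that inserting the ansatz \eqref{S4-repre} into \eqref{Eq-u,theta-epsilon} and matching the coefficient of $\epsilon^{j+1}$ produces the coupled hierarchy \eqref{S4-02}; letting $z=\epsilon^{-1}t\to\infty$, so that every term carrying an $L$-profile (assumed to decay in $z$) drops out, decouples it into the interior system \eqref{S4-03} and the layer system \eqref{S4-04}, with the convention that profiles of negative superscript vanish. Then I would feed in the initial conditions \eqref{S4-N-01}, and, using that $u_0,u_1,\theta_0$ are independent of $\epsilon$, set all higher-order traces to zero, reducing the matching relations to the reset version of \eqref{S4-N-01}.

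Second, I would resolve the lowest-order profiles. Taking $j=-1$ in \eqref{S4-03} and $j=-3,-2$ in \eqref{S4-04} gives \eqref{S4-05} and \eqref{S4-06}. The layer ODEs $u^{L,0}_{zz}=0$ and $u^{L,1}_{zz}=0$, together with the auxiliary functions $u^{P,j}$ of \eqref{S4-07} (which solve $u^{P,j}_{tt}=0$ and are hence forced to equal $u_0$ and $tu_1$), pin down $u^{I,0}_0=u_0$ and $u^{I,0}_1=u_1$ after letting $\epsilon\downarrow0$; returning to \eqref{S4-06} with only vanishing data then yields $u^{L,0}\equiv u^{L,1}\equiv0$, and \eqref{S4-05}$_1+$\eqref{S4-05}$_2$ shows that $u^{I,0}$ solves the structurally damped plate equation \eqref{Eq-u,theta,0}, i.e.\ $u^{I,0}\equiv u^0$. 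The second line of \eqref{S4-05} gives $\theta^{I,0}=-u^{I,0}_t=-u^0_t$, and the $j=-1$ case of \eqref{S4-04} produces a heat equation for $\theta^{L,0}$ with datum $u_1+\theta_0$, solved explicitly as in \eqref{S4-heat}.

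Third, I would pass to the next order: $j=0$ in \eqref{S4-03} yields \eqref{S4-08}, whose source $-\theta^{I,0}_t$ I would rewrite, using $\theta^{I,0}=-u^0_t$ and the equation $u^0_{tt}+\Delta^2u^0-\Delta u^0_t=0$, as $u^0_{tt}=-\Delta^2u^0+\Delta u^0_t$; this is precisely the inhomogeneous structurally damped plate equation for $u^{I,1}$ asserted in the proposition. The remaining profiles $u^{I,j+1},\theta^{I,j}$ and $u^{L,j+1},\theta^{L,j}$ for $j\geqslant1$ are left in the abstract forms \eqref{S4-03}, \eqref{S4-04} with vanishing initial data, which closes the list.

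Since the proposition is purely formal, there is no analytic obstacle; the one delicate point I expect is the initial-layer matching — arguing via the $u^{P,j}$ construction (the analogue of recovering the Prandtl pressure) that the leading interior data must be exactly $u_0$ and $tu_1$, and that the fast-variable profiles $u^{L,0},u^{L,1}$ therefore vanish rather than being arbitrary affine functions of $z$ — together with the bookkeeping of expressing the $u^{I,1}$ source through the equation for $u^0$ rather than leaving it as $-\theta^{I,0}_t$.
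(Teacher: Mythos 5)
Your proposal reproduces the paper's own derivation essentially verbatim: the $\epsilon^{j+1}$-matching into \eqref{S4-02}, decoupling into \eqref{S4-03}--\eqref{S4-04} as $z\to\infty$, the $u^{P,j}$ device from \eqref{S4-07} to fix $u^{I,0}_0=u_0$, $u^{I,0}_1=u_1$ and kill $u^{L,0},u^{L,1}$, the identification $u^{I,0}\equiv u^0$, $\theta^{I,0}=-u^0_t$, the heat solution \eqref{S4-heat} for $\theta^{L,0}$, and the rewriting of the source in \eqref{S4-08} via the $u^0$ equation. Correct and the same approach as the paper.
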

\begin{remark}
For the reason of the Fourier law of heat conduction, according to Proposition \ref{Prop-expansion}, we did not observe any initial layer (i.e. a rapid change with respect to the small thermal parameter $\epsilon$). The profile \eqref{S4-heat} originates from the heat equation only. In other words, we expect some changes polynomially in terms of $1/\epsilon$ rather than an exponential change $\mathrm{e}^{-1/\epsilon}$ (see, for example, the initial layer between the damped waves and the heat equation \cite{Ikehata-Sobajima=2022})
\end{remark}

\subsection{Rigorous justification of singular limits: Proof of Theorem \ref{Thm-singu-lim}}
Let us introduce two quantities to  describe the first-order errors as follows:
\begin{align*}
	\begin{cases}
	U(t,x):=u^{\epsilon}(t,x)-u^0(t,x),\\
	\Theta(t,x):=\theta^{\epsilon}(t,x)-\theta^0(t,x)=\theta^{\epsilon}(t,x)+u^0_t(t,x),
	\end{cases}
\end{align*}
which fulfill the following inhomogeneous thermoelastic plate equations:
\begin{align*}
\begin{cases}
U_{tt}+\Delta^2 U+\Delta \Theta=0,&x\in\mb{R}^n,\ t>0,\\
\epsilon\Theta_t-\Delta\Theta-\Delta U_t=\epsilon F,&x\in\mb{R}^n,\ t>0,\\
U(0,x)=U_t(0,x)=0,\ \Theta(0,x)=u_1(x)+\theta_0(x),&x\in\mb{R}^n,
\end{cases}
\end{align*}
where the source term $F=F(t,x)$ can be expressed by
\begin{align*}
	F(t,x)=-\theta^0_t(t,x)=u^0_{tt}(t,x)=\Delta u^0_t(t,x)-\Delta^2 u^0(t,x)
\end{align*}
that is independent of $\epsilon$.

To prove our desired results, we follow the idea in Section \ref{Sec-Large-time} to get the first-order (in time) coupled system in the Fourier space
\begin{align*}
\begin{cases}
\widehat{U}_t-\widehat{V}=0,&\xi\in\mb{R}^n,\ t>0,\\
\widehat{V}_t-\widehat{W}=0,&\xi\in\mb{R}^n,\ t>0,\\
\epsilon\widehat{W}_t+|\xi|^2\widehat{W}+(1+\epsilon)|\xi|^4\widehat{V}+|\xi|^6\widehat{U}=\epsilon|\xi|^2\widehat{F},&\xi\in\mb{R}^n,\ t>0,\\
\widehat{U}(0,\xi)=\widehat{V}(0,\xi)=0,\ \widehat{W}(0,\xi)=|\xi|^2\big(\hat{u}_1(\xi)+\hat{\theta}_0(\xi)\big),&\xi\in\mb{R}^n,
\end{cases}
\end{align*}
where we used $\widehat{V}:=\widehat{U}_t$, $\widehat{W}:=\widehat{V}_t$ and the equation
\begin{align*}
	\epsilon\widehat{U}_{ttt}+|\xi|^2\widehat{U}_{tt}+(1+\epsilon)|\xi|^4\widehat{U}_t+|\xi|^6\widehat{U}=\epsilon|\xi|^2\widehat{F}.
\end{align*}
Then, we construct a suitable energy
\begin{align*}
\widehat{E}:=\frac{1}{2}\left(\left|\frac{1}{2}|\xi|^2\widehat{V}+\epsilon\widehat{W}\right|^2 +\frac{\epsilon}{1+\epsilon}|\xi|^4\left||\xi|^2\widehat{U}+(1+\epsilon)\widehat{V}\right|^2+\frac{1}{4}|\xi|^4|\widehat{V}|^2+\frac{1-\epsilon}{2(1+\epsilon)}|\xi|^8|\widehat{U}|^2  \right).
\end{align*} 
Taking time-derivative of the energy, we can derive
\begin{align*}
\frac{\mathrm{d}}{\mathrm{d}t}\widehat{E}&=\Re\left[\left(\frac{1}{2}|\xi|^2\widehat{V}_t+\epsilon\widehat{W}_t\right)\left(\frac{1}{2}|\xi|^2\overline{\widehat{V}}+\epsilon\overline{\widehat{W}}\right) \right]+\frac{\epsilon}{1+\epsilon}|\xi|^4\Re\left[(|\xi|^2\widehat{V}+(1+\epsilon)\widehat{W})(|\xi|^2\overline{\widehat{U}}+(1+\epsilon)\overline{\widehat{V}})\right]\\
&\quad+\frac{1}{4}|\xi|^4\Re(\widehat{W}\overline{\widehat{V}})+\frac{1-\epsilon}{2(1+\epsilon)}|\xi|^8\Re(\widehat{V}\overline{\widehat{U}})\\
&=-\frac{1-\epsilon}{2}|\xi|^6|\widehat{V}|^2-\frac{\epsilon}{2}|\xi|^2|\widehat{W}|^2+\epsilon|\xi|^2\Re\left[\widehat{F}\left(\frac{1}{2}|\xi|^2\overline{\widehat{V}}+\epsilon\overline{\widehat{W}}\right)\right]\\
&\leqslant \frac{1+4\epsilon(1-\epsilon)}{8(1-\epsilon)}\epsilon^2|\xi|^2|\widehat{F}|^2\leqslant C\epsilon^2|\xi|^2|\widehat{F}|^2.
\end{align*}
Recalling that $\widehat{F}=-|\xi|^2\hat{u}_t^0-|\xi|^4\hat{u}^0$, we need to get some estimates of $\hat{u}^0_t$ as well as $\hat{u}^0$ in the Fourier space. We apply the partial Fourier transform to \eqref{Eq-u,theta,0}, namely,
\begin{align*}
\begin{cases}
\hat{u}^0_{tt}+|\xi|^4\hat{u}^0+|\xi|^2\hat{u}_t^0=0,&\xi\in\mb{R}^n,\ t>0,\\
\hat{u}^0(0,\xi)=\hat{u}_0(\xi),\ \hat{u}^0_t(0,\xi)=\hat{u}_1(\xi),&\xi\in\mb{R}^n,
\end{cases}
\end{align*}
whose solution and its time-derivative are given by 
\begin{align*}
\hat{u}^0(t,\xi)&=\left[\frac{1}{\sqrt{3}}\sin\left(\frac{\sqrt{3}}{2}|\xi|^2t\right)+\cos\left(\frac{\sqrt{3}}{2}|\xi|^2t\right)\right]\mathrm{e}^{-\frac{1}{2}|\xi|^2t}\hat{u}_0(\xi)+\frac{2}{\sqrt{3}}\frac{\sin\left(\frac{\sqrt{3}}{2}|\xi|^2t\right)}{|\xi|^2}\mathrm{e}^{-\frac{1}{2}|\xi|^2t}\hat{u}_1(\xi),\\
\hat{u}_t^0(t,\xi)&=-\frac{2}{\sqrt{3}}|\xi|^2\sin \left(\frac{\sqrt{3}}{2}|\xi|^2t\right)\mathrm{e}^{-\frac{1}{2}|\xi|^2t}\hat{u}_0(\xi)+\left[\cos\left(\frac{\sqrt{3}}{2}|\xi|^2t\right)-\frac{1}{\sqrt{3}}\sin \left(\frac{\sqrt{3}}{2}|\xi|^2t\right)\right]\mathrm{e}^{-\frac{1}{2}|\xi|^2t}\hat{u}_1(\xi).
\end{align*}
According to 
\begin{align*}
	|\widehat{U}_t(t,\xi)|^2+|\xi|^4|\widehat{U}(t,\xi)|^2&\leqslant \frac{C}{|\xi|^4}\widehat{E}(t,\xi)\leqslant C\epsilon^2|\hat{u}_1(\xi)+\hat{\theta}_0(\xi)|^2+C\epsilon^2|\xi|^{-2}\int_0^t|\widehat{F}(\tau,\xi)|^2\mathrm{d}\tau\\
	&\leqslant C\epsilon^2|\hat{u}_1(\xi)+\hat{\theta}_0(\xi)|^2+C\epsilon^2\int_0^t\mathrm{e}^{-c|\xi|^2\tau}\mathrm{d}\tau\left(|\xi|^{6}|\hat{u}_0(\xi)|^2+|\xi|^2|\hat{u}_1(\xi)|^2\right),
\end{align*}
we integrate it over $\mb{R}^n$ to deduce
\begin{align*}
\|U_t(t,\cdot)\|_{L^2}^2+\|\Delta U(t,\cdot)\|_{L^2}^2&\leqslant C\epsilon^2\|u_1+\theta_0\|_{L^2}^2+C\epsilon^2\int_0^t\left\|\chi_{\intt}(\xi)|\xi|^3\mathrm{e}^{-c|\xi|^2\tau}\hat{u}_0(\xi)\right\|_{L^2}^2\mathrm{d}\tau\\
&\quad+C\epsilon^2\int_0^t\left\|\chi_{\intt}(\xi)|\xi|\mathrm{e}^{-c|\xi|^2\tau}\hat{u}_1(\xi)\right\|_{L^2}^2\mathrm{d}\tau+C\epsilon^2\left(\|u_0\|_{H^2}^2+\|u_1\|_{L^2}^2\right)\\
&\leqslant C\epsilon^2\|u_1+\theta_0\|_{L^2}^2+C\epsilon^2\left(\|u_0\|_{H^2}^2+\|u_1\|_{L^2}^2\right)\\
&\quad+C\epsilon^2\left(\int_0^t(1+\tau)^{-3-\frac{n}{2}}\mathrm{d}\tau\|u_0\|_{L^1}^2+\int_0^t(1+\tau)^{-1-\frac{n}{2}}\mathrm{d}\tau\|u_1\|_{L^1}^2\right)\\
&\leqslant C\epsilon^2\left(\|u_1+\theta_0\|_{L^2}^2+\|u_0\|_{H^2\cap L^1}^2+\|u_1\|_{L^2\cap L^1}^2\right),
\end{align*}
where the next estimate was used:
\begin{align*}
\int_0^t\big(1-\chi_{\intt}(\xi)\big)\mathrm{e}^{-c|\xi|^2\tau}\mathrm{d}\tau=\frac{1-\chi_{\intt}(\xi)}{c|\xi|^2}\left(1-\mathrm{e}^{-c|\xi|^2t}\right)\lesssim\frac{1-\chi_{\intt}(\xi)}{|\xi|^2}.
\end{align*}
Then, we complete the first desired consequence in the sense of derivatives.

For another, we already know
\begin{align*}
|\widehat{U}(t,\xi)|^2\leqslant C\epsilon^2\frac{|\hat{u}_1(\xi)+\hat{\theta}_0(\xi)|^2}{|\xi|^4}+C\epsilon^2\int_0^t\mathrm{e}^{-c|\xi|^2\tau}\mathrm{d}\tau\left(|\xi|^2|\hat{u}_0(\xi)|^2+\frac{|\hat{u}_1(\xi)|^2}{|\xi|^2}\right).
\end{align*}
In order to achieve global (in time) convergence result for $n\geqslant 3$, we apply $|P_{u_1}|=0$ so that
\begin{align*}
|\hat{u}_1(\xi)|^2\leqslant C\left(|P_{u_1}|^2+|\xi|^2\|u_1\|_{L^{1,1}}^2\right)=C|\xi|^2\|u_1\|_{L^{1,1}}^2.
\end{align*} 
Thus, one gets
\begin{align*}
	\|U(t,\cdot)\|_{L^2}^2&\leqslant C\epsilon^2\|u_1+\theta_0\|_{\dot{H}^{-2}}^2+C\epsilon^2\left(\|u_0\|_{L^2}^2+\|u_1\|_{L^2}^2\right)\\
	&\quad+C\epsilon^2\left(\int_0^t(1+\tau)^{-1-\frac{n}{2}}\mathrm{d}\tau\|u_0\|_{L^1}^2+\int_0^t(1+\tau)^{-\frac{n}{2}}\mathrm{d}\tau\|u_1\|_{L^{1,1}}^2\right)\\
	&\leqslant C\epsilon^2\left(\|u_1+\theta_0\|_{\dot{H}^{-2}}^2+\|u_0\|_{L^2\cap L^1}^2+\|u_1\|_{L^2\cap L^{1,1}}^2\right)
\end{align*}
for $n\geqslant 3$ since $(1+\tau)^{-\frac{n}{2}}\in L^1$ if $n\geqslant 3$. It finishes our proof.

\subsection{Rigorous justification of second-order profiles: Proof of Theorem \ref{Thm-Second}}
Strongly motivated by Proposition \ref{Prop-expansion}, we may construct the second-order error terms
\begin{align}\label{Ansatz-second}
	\begin{cases}
U^s(t,x)=u^{\epsilon}(t,x)-u^0(t,x)-\epsilon u^{I,1}(t,x),\\
\Theta^s(t,x)=\theta^{\epsilon}(t,x)-\theta^0(t,x)-\epsilon\theta^{I,1}(t,x),	
	\end{cases}
\end{align}
where the profile $u^{I,1}=u^{I,1}(t,x)$ fulfills \eqref{S4-08} and the other profile $\theta^{I,1}=\theta^{I,1}(t,x)$ actually satisfies
\begin{align*}
	\theta^{I,1}=(-\Delta)^{-1}\left(u^{I,1}_{tt}+\Delta^2u^{I,1}\right).
\end{align*}
Since $(u^{I,1},\theta^{I,1})$ is the solution to
\begin{align*}
\begin{cases}
u^{I,1}_{tt}+\Delta^2 u^{I,1}+\Delta \theta^{I,1}=0,&x\in\mb{R}^n,\ t>0,\\
\theta_t^0-\Delta \theta^{I,1}-\Delta u^{I,1}_t=0,&x\in\mb{R}^n,\ t>0,\\
u^{I,1}(0,x)=u^{I,1}_t(0,x)=\theta^{I,1}(0,x)=0,&x\in\mb{R}^n,
\end{cases}
\end{align*}
 the difference $(U^s,\Theta^s)$ solves inhomogeneous thermoelastic plate equations as follows:
\begin{align*}
\begin{cases}
	U^s_{tt}+\Delta^2 U^s+\Delta \Theta^s=0,&x\in\mb{R}^n,\ t>0,\\
	\epsilon\Theta_t^s-\Delta\Theta^s-\Delta U^s_t=\epsilon^2 F^s,&x\in\mb{R}^n,\ t>0,\\
	U^s(0,x)=U_t^s(0,x)=\Theta^s(0,x)=0,&x\in\mb{R}^n,
\end{cases}
\end{align*}
where the assumption $u_1(x)+\theta_0(x)\equiv0$ was proposed, and the source term on the second equation in the above is
\begin{align*}
F^s(t,x)&=-\theta_t^{I,1}(t,x)=\Delta^{-1}\partial_t\big(u_{tt}^{I,1}(t,x)+\Delta^2u^{I,1}(t,x)\big)\\
&=u^{I,1}_{tt}(t,x)-\Delta u_t^0(t,x)+u^0_{tt}(t,x)\\
&=-\Delta^2 u^{I,1}(t,x)+\Delta u_t^{I,1}(t,x)-\Delta^2 u^0(t,x)+\Delta u^0_t(t,x).
\end{align*}
With the aid of the same procedure as those in the last subsection, we may estimate
\begin{align*}
&|\widehat{U}_t^s(t,\xi)|^2+|\xi|^4|\widehat{U}^s(t,\xi)|^2\leqslant C\epsilon^4|\xi|^{-2}\int_0^t|\widehat{F}^s(\tau,\xi)|^2\mathrm{d}\tau\\
&\qquad\leqslant C\epsilon^4|\xi|^6\int_0^t\left(|\hat{u}^{I,1}(\tau,\xi)|^2+|\hat{u}^0(\tau,\xi)|^2\right)\mathrm{d}\tau+C\epsilon^4|\xi|^2\int_0^t\left(|\hat{u}_t^{I,1}(\tau,\xi)|^2+|\hat{u}^0_t(\tau,\xi)|^2\right)\mathrm{d}\tau.
\end{align*}
Moreover, the Fourier transform of \eqref{S4-08} is given by
\begin{align*}
\begin{cases}
\hat{u}_{tt}^{I,1}+|\xi|^2\hat{u}^{I,1}_t+|\xi|^4\hat{u}^{I,1}=-|\xi|^4\hat{u}^0-|\xi|^2\hat{u}_t^0,&\xi\in\mb{R}^n,\ t>0,\\
\hat{u}^{I,1}(0,\xi)=\hat{u}_t^{I,1}(0,\xi)=0,&\xi\in\mb{R}^n.
\end{cases}
\end{align*}
Its solution can be determined by the use of Duhamel's principle, i.e.
\begin{align*}
\hat{u}^{I,1}(t,\xi)=-\frac{2}{\sqrt{3}}\int_0^t\sin\left(\frac{\sqrt{3}}{2}|\xi|^2(t-\tau)\right)\mathrm{e}^{-\frac{1}{2}|\xi|^2(t-\tau)}\left(|\xi|^2\hat{u}^0(\tau,\xi)+\hat{u}_t^0(\tau,\xi)\right)\mathrm{d}\tau.
\end{align*}
Some direct estimates imply
\begin{align*}
|\hat{u}^{I,1}(t,\xi)|&\leqslant Ct\mathrm{e}^{-\frac{1}{2}|\xi|^2t}\left(|\xi|^2|\hat{u}_0(\xi)|+|\hat{u}_1(\xi)|\right),\\
|\hat{u}_t^{I,1}(t,\xi)|&\leqslant Ct|\xi|^2\mathrm{e}^{-\frac{1}{2}|\xi|^2t}\left(|\xi|^2|\hat{u}_0(\xi)|+|\hat{u}_1(\xi)|\right).
\end{align*}
Summarizing the last estimates, we claim
\begin{align*}
	|\widehat{U}^s_t(t,\xi)|^2+|\xi|^4|\widehat{U}^s(t,\xi)|^2&\leqslant  C\epsilon^4\int_0^t\mathrm{e}^{-c|\xi|^2\tau}\mathrm{d}\tau\left(|\xi|^{6}|\hat{u}_0(\xi)|^2+|\xi|^2|\hat{u}_1(\xi)|^2\right),\\
	|\widehat{U}^s(t,\xi)|^2&\leqslant  C\epsilon^4\int_0^t\mathrm{e}^{-c|\xi|^2\tau}\mathrm{d}\tau\left(|\xi|^2|\hat{u}_0(\xi)|^2+\frac{|\hat{u}_1(\xi)|^2}{|\xi|^2}\right).
\end{align*}
By using the same approach as those for Theorem \ref{Thm-singu-lim}, the next estimates can be done:
\begin{align*}
\|U^s_t(t,\cdot)\|_{L^2}^2+\|\Delta U^s(t,\cdot)\|_{L^2}^2\leqslant C\epsilon^4\left(\|u_0\|_{H^2\cap L^1}^2+\|u_1\|_{L^2\cap L^1}^2\right)
\end{align*}
for $n\geqslant 1$, and
\begin{align*}
	\|U^s(t,\cdot)\|_{L^2}^2&\leqslant C\epsilon^4\left(\|u_0\|_{L^2\cap L^1}^2+\|u_1\|_{L^2\cap L^{1,1}}^2\right)
\end{align*} 
for $n\geqslant 3$. Then, our proof is completed.

\section{Concluding remarks}\label{Sec-Concluding}
In the final section, we will give some remarks concerning wide applications of the reduction methodology proposed in this work. The typical example of applications is the system of thermoelasticity, which can describe the elasticity and thermal behavior of elastic, heat conducting media. Let us concentrate on the following Cauchy problem for the thermoelastic system in 1D:
\begin{align}\label{Eq-TE}
	\begin{cases}
	u_{tt}-\alpha u_{xx}+\gamma_1\theta_x=0,&x\in\mb{R},\ t>0,\\
	\theta_t-\kappa\theta_{xx}+\gamma_2u_{tx}=0,&x\in\mb{R},\ t>0,\\
	u(0,x)=u_0(x),\ u_t(0,x)=u_1(x),\ \theta(0,x)=\theta_0(x),&x\in\mb{R},
	\end{cases}
\end{align}
where $u=u(t,x)$ and $\theta=\theta(t,x)$ denote the elastic displacement and the temperature difference to the equilibrium, individually. In the model \eqref{Eq-TE}, physical constants $\alpha>0$ is the elasticity modules, $\kappa>0$ is the thermal conductivity and the thermoelastic coupling coefficients $\gamma_1,\gamma_2$ fulfill $\gamma_1\gamma_2>0$.

Following the same approach in this work, we may reduce \eqref{Eq-TE} to the third-order (in time) PDEs
\begin{align*}
	\begin{cases}
	u_{ttt}-\kappa u_{ttxx}-(\gamma_1\gamma_2+\alpha)u_{txx}+\kappa\alpha u_{xxxx}=0,&x\in\mb{R},\ t>0,\\
	u(0,x)=u_0(x),\ u_t(0,x)=u_1(x),\ u_{tt}(0,x)=\alpha u_0''(x)-\gamma_1\theta_0'(x),&x\in\mb{R}.
	\end{cases}
\end{align*}
We conjecture that by using Fourier analysis, asymptotic analysis as well as WKB method, some qualitative properties of solutions to \eqref{Eq-TE}, particularly, the solution itself $u=u(t,x)$, can be obtained. Nevertheless, it is not trivial to derive optimal estimates and optimal leading terms in the higher-dimensions $n\geqslant 2$ because of some new challenges from the coupling effect and singularities.

\section*{Acknowledgments}
 The second author was supported in part by Grant-in-Aid for scientific Research (C) 20K03682 of JSPS. The authors thank Michael Reissig (TU Bergakademie Freiberg) for the suggestions in the preparation of the paper.


\begin{thebibliography}{99}
\bibitem{Bezerra-Carbone-Nascimento=2018}
\newblock F.D.M. Bezerra, V.L. Carbone, M.J.D. Nascimento, K. Schiabel, 
\newblock Pullback attractors for a class of non-autonomous thermoelastic plate systems. 
\newblock \emph{Discrete Contin. Dyn. Syst. Ser. B} \textbf{23} (2018), no. 9, 3553--3571.
\bibitem{Bezerra-Carbone-Nascimento=2019}
\newblock F.D.M. Bezerra, V.L. Carbone, M.J.D. Nascimento, K. Schiabel, 
\newblock Regularity and upper semicontinuity of pullback attractors for a class of nonautonomous thermoelastic plate systems. 
\newblock \emph{Pacific J. Math.} \textbf{301} (2019), no. 2, 395--419. 
\bibitem{Chen-Ikehata=2021}
\newblock W. Chen, R. Ikehata, 
\newblock The Cauchy problem for the Moore-Gibson-Thompson equation in the dissipative case. 
\newblock \emph{J. Differential Equations} \textbf{292} (2021), 176--219.
\bibitem{Chen-Ikehata-Palmieri=2023}
\newblock W. Chen, R. Ikehata, A. Palmieri,
\newblock Asymptotic behaviors for Blackstock's model of thermoviscous flow.
\newblock \emph{Indiana Univ. Math. J.} (2023), accepted.
\bibitem{Dabbicco-Ebert=2017}
\newblock M. D'Abbicco, M.R. Ebert, 
\newblock A new phenomenon in the critical exponent for structurally damped semi-linear evolution equations. 
\newblock \emph{Nonlinear Anal.} \textbf{149} (2017), 1--40.
\bibitem{Dao-Reissig=2019}
\newblock T.A. Dao, M. Reissig,
\newblock An application of $L^1$ estimates for oscillating integrals to parabolic like semi-linear structurally damped $\sigma$-evolution models. 
\newblock \emph{J. Math. Anal. Appl.} \textbf{476} (2019), no. 2, 426--463.
\bibitem{Denk-Racke=2006}
\newblock R. Denk, R. Racke,
\newblock $L^p$-resolvent estimates and time decay for generalized thermoelastic plate equations.
\newblock \emph{Electron. J. Differential Equations} 2006, No. 48, 16 pp.
\bibitem{Denk-Shibata=2017}
\newblock R. Denk, Y. Shibata,
\newblock Maximal regularity for the thermoelastic plate equations with free boundary conditions.
\newblock \emph{J. Evol. Equ.} \textbf{17} (2017), no. 1, 215--261.
\bibitem{Denk-Shibata=2019}
\newblock R. Denk, Y. Shibata, 
\newblock Generation of semigroups for the thermoelastic plate equation with free boundary conditions. 
\newblock \emph{Evol. Equ. Control Theory} \textbf{8} (2019), no. 2, 301--313.
\bibitem{Ebert-Reissig-book}
\newblock  M.R. Ebert, M. Reissig,
\newblock \emph{Methods for Partial Differential Equations}.  
\newblock Birkh\"auser Basel, Germany, 2018.
\bibitem{Holmes=1995}
\newblock M.H. Holmes, 
\newblock \emph{Introduction to perturbation methods}. 
\newblock Texts in Applied Mathematics, 20. Springer-Verlag, New York, 1995.
\bibitem{Ikehata=2014}
\newblock R. Ikehata, 
\newblock Asymptotic profiles for wave equations with strong damping. 
\newblock \emph{J. Differential Equations} \textbf{257} (2014), no. 6, 2159--2177.
\bibitem{Ikehata=2021-Dec}
\newblock R. Ikehata,
\newblock $L^2$-blowup estimates of the plate equation.
\newblock \emph{Preprint} (2021). arXiv:2112.07142
\bibitem{Ikehata-Onodera=2017}
\newblock R. Ikehata, M. Onodera, 
\newblock Remarks on large time behavior of the $L^2$-norm of solutions to strongly damped wave equations. 
\newblock \emph{Differential Integral Equations} \textbf{30} (2017), no. 7-8, 505--520.
\bibitem{Ikehata-Sobajima=2022}
\newblock R. Ikehata, M. Sobajima,
\newblock Singular limit problem of abstract second order evolution equations.
\newblock \emph{Hokkaido Math. J.} (2022).
\bibitem{Jachmann-Reissig=2009} 
\newblock K. Jachmann, M. Reissig, 
\newblock Cauchy problem for linear thermoelastic systems. I. A unified approach. 
\newblock \emph{Int. J. Dyn. Syst. Differ. Equ.} \textbf{2} (2009), no. 3-4, 170--201.
\bibitem{Jiang-Racke=2000}
\newblock S. Jiang, R. Racke, 
\newblock \emph{Evolution equations in thermoelasticity}. \newblock Chapman \& Hall/CRC Monographs and Surveys in Pure and Applied Mathematics, 112. Chapman \& Hall/CRC, Boca Raton, FL, 2000. x+308 pp.
\bibitem{Kim=1992}
\newblock J.U. Kim,
\newblock On the energy decay of a linear thermoelastic bar and plate.
\newblock \emph{SIAM J. Math. Anal.} \textbf{23} (1992), no. 4, 889--899.
\bibitem{Lagnese=1989}
\newblock J.E. Lagnese, 
\newblock \emph{Boundary stabilization of thin plates}. 
\newblock SIAM Studies in Applied Mathematics, 10. Society for Industrial and Applied Mathematics (SIAM), Philadelphia, PA, 1989.
\bibitem{Lagnese-Lions=1988}
\newblock J.E. Lagnese, J.-L. Lions,
\newblock \emph{Modelling analysis and control of thin plates}.
\newblock Recherches en Math\'ematiques Appliqu\'ees [Research in Applied Mathematics], 6. Masson, Paris, 1988.
\bibitem{Lasiecka-Triggiani=1998}
\newblock I. Lasiecka, R. Triggiani,
\newblock Analyticity, and lack thereof, of thermo-elastic semigroups. 
\newblock Control and partial differential equations (Marseille-Luminy, 1997), 199--222,
\emph{ESAIM Proc.}, 4, Soc. Math. Appl. Indust., Paris, 1998.
\bibitem{Lasiecka-Wilke=2013}
\newblock I. Lasiecka, M. Wilke,
\newblock Maximal regularity and global existence of solutions to a quasilinear thermoelastic plate system.
\newblock \emph{Discrete Contin. Dyn. Syst.} \textbf{33} (2013), no. 11-12, 5189--5202.
\bibitem{Lebeau-Zuazua=1998}
\newblock G. Lebeau, E. Zuazua, 
\newblock Null-controllability of a system of linear thermoelasticity. 
\newblock \emph{Arch. Rational Mech. Anal.} \textbf{141} (1998), no. 4, 297--329.
\bibitem{Liu-Liu=1997}
\newblock K. Liu, Z. Liu,
\newblock Exponential stability and analyticity of abstract linear thermoelastic systems.
\newblock \emph{Z. Angew. Math. Phys.} \textbf{48} (1997), no. 6, 885--904.
\bibitem{Liu-Renardy=1995}
\newblock Z. Liu, M. Renardy,
\newblock A note on the equations of a thermoelastic plate.
\newblock \emph{Appl. Math. Lett.} \textbf{8} (1995), no. 3, 1--6.
\bibitem{Liu-Reissig=2014}
\newblock Y. Liu, M. Reissig, 
\newblock Models of thermodiffusion in 1D. 
\newblock \emph{Math. Methods Appl. Sci.} \textbf{37} (2014), no. 6, 817--837.
\bibitem{Liu-Shi=2022}
\newblock Y. Liu, J. Shi, 
\newblock Coupled plate equations with indirect damping: smoothing effect, decay properties and approximation. 
\newblock \emph{Z. Angew. Math. Phys.} \textbf{73} (2022), no. 1, Paper No. 11, 15 pp.
\bibitem{Liu-Yong=1998}
\newblock Z. Liu, J. Yong,
\newblock Qualitative properties of certain $C_0$ semigroups arising in elastic systems with various dampings.
\newblock \emph{Adv. Differential Equations} \textbf{3} (1998), no. 5, 643--686.
\bibitem{Liu-Zheng=1997}
\newblock Z. Liu, S. Zheng,
\newblock Exponential stability of the Kirchhoff plate with thermal or viscoelastic damping.
\newblock \emph{Quart. Appl. Math.} \textbf{55} (1997), no. 3, 551--564.
\bibitem{Munoz-Rivera-Racke=1995}
\newblock J.E. Mu\~noz Rivera, R. Racke,
\newblock Smoothing properties, decay, and global existence of solutions to nonlinear coupled systems of thermoelastic type.
\newblock \emph{SIAM J. Math. Anal.} \textbf{26} (1995), no. 6, 1547--1563.
\bibitem{Naito-Shibata=2009}
\newblock Y. Naito, Y. Shibata,
\newblock On the $L_p$ analytic semigroup associated with the linear thermoelastic plate equations in the half-space.
\newblock \emph{J. Math. Soc. Japan} \textbf{61} (2009), no. 4, 971--1011.
\bibitem{Palmieri-Reissig=2018}
\newblock A. Palmieri, M. Reissig, 
\newblock Semi-linear wave models with power non-linearity and scale-invariant time-dependent mass and dissipation, II. 
\newblock \emph{Math. Nachr.} \textbf{291} (2018), no. 11-12, 1859--1892.
\bibitem{Perla-Zuazua=2003}
\newblock G. Perla Menzala, E. Zuazua,
\newblock The energy decay rate for the modified von K\'arm\'an system of thermoelastic plates: an improvement. 
\newblock \emph{Appl. Math. Lett.} \textbf{16} (2003), no. 4, 531--534.
\bibitem{Pham-Kainane-Reissig=2015}
\newblock D.T. Pham, M. Kainane Mezadek, M. Reissig,
\newblock Global existence for semi-linear structurally damped $\sigma$-evolution models. 
\newblock \emph{J. Math. Anal. Appl.} \textbf{431} (2015), no. 1, 569--596.
\bibitem{Runst-Si-book}
\newblock T. Runst, W. Sickel,
\newblock \emph{Sobolev spaces of fractional order, Nemytskij operators, and nonlinear partial differential equations}.
\newblock De Gruyter Series in Nonlinear Analysis and Applications, 3. Walter de Gruyter \& Co., Berlin, 1996.
\bibitem{Racke-Ueda=2016}
\newblock R. Racke, Y. Ueda, 
\newblock Dissipative structures for thermoelastic plate equations in $\mathbb{R}^n$. 
\newblock \emph{Adv. Differential Equations} \textbf{21} (2016), no. 7-8, 601--630.
\bibitem{Racke-Ueda=2017} 
\newblock R. Racke, Y. Ueda, 
\newblock Nonlinear thermoelastic plate equations--global existence and decay rates for the Cauchy problem. 
\newblock \emph{J. Differential Equations} \textbf{263} (2017), no. 12, 8138--8177.
\bibitem{Radu-Todorova-Yodanov=2016} 
\newblock P. Radu, G. Todorova, B. Yordanov, 
\newblock The generalized diffusion phenomenon and applications. 
\newblock \emph{SIAM J. Math. Anal.} \textbf{48} (2016), no. 1, 174--203.
\bibitem{Reissig=2016} 
\newblock M. Reissig, 
\newblock Structurally damped elastic waves in 2D. 
\newblock \emph{Math. Methods Appl. Sci.} \textbf{39} (2016), no. 15, 4618--4628.
\bibitem{Reissig-Wang=2005}
\newblock M. Reissig, Y.G. Wang, 
\newblock Cauchy problems for linear thermoelastic systems of type III in one space variable. 
\newblock \emph{Math. Methods Appl. Sci.} \textbf{28} (2005), no. 11, 1359--1381.
 \bibitem{Said-Houari=2013}
 \newblock B. Said-Houari, 
 \newblock Decay properties of linear thermoelastic plates: Cattaneo versus Fourier law.
 \newblock \emph{Appl. Anal.} \textbf{92} (2013), no. 2, 424--440.
\bibitem{Shibata=1994}
\newblock Y. Shibata,
\newblock On the exponential decay of the energy of a linear thermoelastic plate.
\newblock \emph{Mat. Apl. Comput.} \textbf{13} (1994), no. 2, 81--102.
\bibitem{Yang-Wang=2006}
\newblock L. Yang, Y.G. Wang, 
\newblock Well-posedness and decay estimates for Cauchy problems of linear thermoelastic systems of type III in 3-D. 
\newblock \emph{Indiana Univ. Math. J.} \textbf{55} (2006), no. 4, 1333--1361.
\end{thebibliography}
\end{document}